\newtheorem{theorem}{Theorem}[section]
\newtheorem{lemma}[theorem]{Lemma}
\newtheorem{corollary}[theorem]{Corollary}
\newtheorem{proposition}[theorem]{Proposition}
 \theoremstyle{definition}
 \newtheorem{definition}[theorem]{Definition}
 \newtheorem{remark}[theorem]{Remark}
\newtheorem{remarks}[theorem]{Remarks}
 \newtheorem{example}[theorem]{Example}
\newtheorem{examples}[theorem]{Examples}
\numberwithin{equation}{section}
\newcommand {\N}{\mathbb{N}} %% positive integers
\newcommand {\Z}{\mathbb{Z}} %% integers
\newcommand {\R}{\mathbb{R}} %% reals
\newcommand {\Q}{\mathbb{Q}} %% rationals
\newcommand {\C}{\mathbb{C}} %% complex
\newcommand {\sph}{\mathbb{S}} %% sphere unite
\newcommand{\BB}{\mathcal{B}}
\newcommand{\CC}{\mathcal{C}}
\newcommand{\DD}{\mathcal{D}}
\newcommand{\FF}{\mathcal{F}}
\newcommand{\UU}{\mathcal{U}}
\newcommand{\tore}{\mathbb{T}}
\DeclareMathOperator{\Fix}{Fix}
\DeclareMathOperator{\CA}{CA}
\DeclareMathOperator{\Id}{Id}
 \newcommand{\Set}{\mathrm{Set}}
\newcommand{\Grp}{\mathrm{Grp}}
\newcommand{\Rng}{\mathrm{Rng}}
\newcommand{\Fld}{\mathrm{Fld}}
\newcommand{\Mod}{\mathrm{Mod}}
\newcommand{\Top}{\mathrm{Top}}
\newcommand{\hTop}{\mathrm{hTop}}
\newcommand{\Vect}{\mathrm{Vec}}
\newcommand{\Man}{\mathrm{Man}}
\newcommand{\CHT}{\mathrm{CHT}}
\newcommand{\Aal}{\mathrm{Aal}}
\newcommand{\Setf}{\mathrm{Set}^{\scriptstyle{f}}}
\newcommand{\Modfg}{\mathrm{Mod}^{\scriptstyle{f-g}}}
\newcommand{\ModArt}{\mathrm{Mod}^{\scriptstyle{Art}}}
\newcommand{\Vectfd}{\mathrm{Vec}^{\scriptstyle{f-d}}}
\begin{document}
 \title[Surjunctivity and reversibility of cellular automata]{Surjunctivity and reversibility of cellular automata over concrete categories}
 \author{Tullio Ceccherini-Silberstein}
\address{Dipartimento di Ingegneria, Universit\`a del Sannio, C.so
Garibaldi 107, 82100 Benevento, Italy}
\email{tceccher@mat.uniroma3.it}
\author{Michel Coornaert}
\address{Institut de Recherche Math\'ematique Avanc\'ee,
UMR 7501,                                             Universit\'e  de Strasbourg et CNRS,
                                                 7 rue Ren\'e-Descartes,
                                               67000 Strasbourg, France}
\email{coornaert@math.unistra.fr}
\date{\today}
\keywords{cellular automaton, concrete category,     closed image property, surjunctive cellular automaton, reversible cellular automaton}
\subjclass[2000]{37B15, 68Q80, 18B05}
\begin{abstract}
Following ideas developed by Misha Gromov,
we investigate surjunctivity and reversibility properties of cellular automata defined over certain concrete categories.
 \end{abstract}
\maketitle
\begin{center}
\emph{Dedicated to Alessandro Fig\`a Talamanca}
\end{center}

% SECTION 1
\section{Introduction}

A cellular automaton is an abstract machine which takes as input a configuration of a universe and produces as output another configuration. The universe consists of cells and a configuration is described by the state of each cell of the universe.
There is an input   and an output state set and these two sets  may be distinct. 
 The state sets are also called the sets of colors, the sets of symbols,  or the alphabets. 
The transition rule of a cellular automaton must obey two important properties, namely    locality and time-independence. Locality means that the output state of a given cell only depends on the input states of a finite number of its neighboring cells possibly including the cell itself.  
\par
In the present paper, we restrict ourselves to cellular automata for which the universe is a group.
The cells are the elements of the group. 
If  the input alphabet is denoted by $A$, the output alphabet by $B$,   and the group by $G$,  this means that a cellular automaton is given by  a map $\tau \colon A^G \to B^G$,
 where  $A^G := \{x \colon G \to A \}$ is the  set of all input configurations and $B^G := \{y \colon G \to B \}$   is the set of all possible output configurations.
 Besides locality, we will also always require a  symmetry condition for $\tau$, namely that it commutes with the  shift actions of $G$ on $A^G$ and $B^G$  (see Section \ref{s:ca} for precise definitions).
In the case when $G = \Z^2$ and $A = B$ is a finite set, such cellular automata were first considered by John von Neumann in the late 1940s to serve as theoretical models for   
self-reproducing robots.
Subsequently, cellular automata over $\Z$, $\Z^2$ or  $\Z^3$ were widely used to model complex systems arising in natural or physical sciences.
On the other hand, the mathematical study of cellular automata developed as a flourishing branch of theoretical computer science with numerous connections to abstract algebra, topological dynamical systems, ergodic theory, statistics and probability theory.
\par
One of the most famous results in the theory of cellular automata is the Garden of Eden theorem established by Moore \cite{moore} and Myhill \cite{myhill} in the early 1960s.
It asserts that a cellular automaton $\tau \colon A^G \to A^G$, with $G = \Z^d$ and $A$ finite, is surjective if and only if it is pre-injective
(here \emph{pre-injective} means that two configurations having  the same image by $\tau$ must be equal if they coincide outside a finite number of cells).
The name of this theorem comes from the fact that a configuration that is not in the image of a cellular automaton $\tau$ is sometimes called a \emph{Garden of Eden} for $\tau$ because in a dynamical picture of the universe, obtained by iterating $\tau$, such a configuration  can only appear at time $0$. Thus, the surjectivity of $\tau$ is equivalent to the absence of Garden of Eden configurations.  
At the end of the last century, the Garden of Eden theorem was extended to any amenable group $G$ in \cite{ceccherini}
and it is now known \cite{bartholdi} that the class of amenable groups is precisely the largest class of groups for which the Garden of Eden theorem is valid. 
Observe that an immediate consequence of the  Garden of Eden theorem is that  every injective cellular automaton  
$\tau \colon A^G \to A^G$, with $G$ amenable and $A$ finite, is surjective and hence bijective.
The fact that injectivity implies surjectivity, a property known as \emph{surjunctivity} 
\cite{gottschalk}, was extended by 
Gromov \cite{gromov-esav} and Weiss \cite{weiss-sgds}  to all cellular automata $\tau \colon A^G \to A^G$ with finite alphabet over sofic groups.
The class of sofic groups is a class of groups introduced by Gromov \cite{gromov-esav}  containing in particular all amenable groups and all residually finite groups.
Actually,  there is no known example of a group that is not sofic. 
\par
Let us note that in the classical literature on cellular automata, the alphabet sets are most often assumed to be finite. With these hypotheses, topological methods based on properties of  compact spaces  may be used   since $A^G$ is compact for the prodiscrete topology when $A$ is finite (see Subsection \ref{ss:config-shift}).
For example, one easily deduces from compactness that every bijective cellular automaton 
$\tau \colon A^G \to B^G$ is reversible when $A$ is finite (a cellular automaton is called \emph{reversible} if it is bijective and its inverse map is also a cellular automaton, see Subsection \ref{ss:reversible}). On the other hand, in the infinite alphabet case, there exist bijective cellular automata that are not reversible.
\par 
The aim of the present paper is to investigate cellular automata $\tau \colon A^G \to B^G$ when the alphabets $A$ and $B$ are (possibly infinite) objects in some concrete category $\CC$ and the local defining rules of $\tau$ are $\CC$-morphisms (see Section \ref{s:ca-concrete-categories} for precise definitions).
For example, $\CC$ can be the category of (let us say left) modules over some ring $R$, the category of topological spaces, or some of their subcategories.
When $\CC$ is the category of vector spaces over a field $K$, 
or, more generally, the category of left modules over a ring $R$,
we recover the notion of a \emph{linear cellular automaton} studied in 
\cite{garden}, \cite{israel}, \cite{modsofic}, \cite{artinian}.
When $\CC$ is the category of affine algebraic sets over a field $K$,
this gives the notion of an \emph{algebraic cellular automaton} as in \cite{algebraic}.
\par
Following ideas developed by Gromov \cite{gromov-esav}, we shall give sufficient conditions for a concrete category $\CC$ that guarantee surjunctivity of all $\CC$-cellular automata $\tau \colon A^G \to A^G$ when the group $G$ is residually finite 
(see Section \ref{sec:surjunctivity-rf-groups}).
We shall also describe conditions on $\CC$ implying that all bijective $\CC$-automata are reversible (see Section \ref{sec:reversibility}).
\par
The present paper is mostly expository and collects results established in Gromov's seminal article  \cite{gromov-esav} and  in a series of papers written by the authors 
\cite{garden}, \cite{israel}, \cite{modsofic}, \cite{artinian}, \cite{induction},
\cite{periodic}, \cite{algebraic}. 
However, our survey contains some new proofs as well as some new results. 
On the other hand, we hope our concrete categorical approach will   help the reader to a better understanding of this fascinating subject connected to so many contemporary branches of mathematics and theoretical computer science.

% SECTION 2
 \section{cellular automata}
 \label{s:ca}

In this section, we have gathered background material on cellular automata over groups.
The reader is referred to our monograph \cite{book} for a more detailed exposition.

\subsection{The space of configurations and the shift action}
\label{ss:config-shift}
Let $G$ be a group and let $A$ be a set (called the \emph{alphabet} or the set of \emph{colors}).
The set $$A^G = \{x \colon G \to A \}$$
is endowed with its \emph{prodiscrete topology}, i.e., the product topology obtained by taking the discrete topology on each factor $A$ of $A^G = \prod_{g \in G} A$.
Thus, if $x \in A^G$, a base of open neighborhoods of $x$ is provided by the sets
$$V(x,\Omega) := \{y \in A^G : x\vert_\Omega = y\vert_\Omega \},$$
where $\Omega$ runs over all finite subsets of $G$ and $x\vert_\Omega \in A^\Omega$ denotes the restriction of  $x \in A^G$ to $\Omega$. 

The space $A^G$, which is called the space of \emph{configurations}, is Hausdorff and totally disconnected. It it is compact if and only if the alphabet $A$ is finite.

\begin{example}
If $G$ is countably infinite and $A$ is finite of cardinality $\vert A \vert \geq 2$, then $A^G$ is homeomorphic to the Cantor set.
This is the case for $G = \Z$ and $A = \{0,1\}$, where $A^G$ is the space of bi-infinite sequences of $0$'s and $1$'s.
\end{example}

There is a natural continuous left action of $G$ on $A^G$ given by
\begin{align*}
G \times A^G  &\to A^G \\
(g,x) &\mapsto gx
\end{align*}
where
$$
gx(h) = x(g^{-1}h) \quad \forall h \in G.
$$
This action is called the $G$-\emph{shift} on $A^G$. 

%\begin{figure}
%\Large{
%\begin{align*}
%x(n) &:  \dots 1010011010{\bf 0}0110111001010011\dots\\
%3x(n) =x(n - 3) &: \dots 1010011{\bf 0}1000110111001010011\dots
%\end{align*}
%}
%\caption{The $\Z$-shift on $\{0,1\}^\Z$.}
%\end{figure}

The study of the shift action on $A^G$ is the central theme in \emph{symbolic dynamics}.

\subsection{Cellular automata}
\label{ss:ca}
\begin{definition}
Let $G$ be a group. 
Given two alphabets $A$ and $B$, a map $\tau \colon A^G \to B^G$ is called a \emph{cellular automaton} if 
 there exist a finite subset $M \subset G$ and a map $\mu_M \colon A^M \to B$ such that
\begin{equation}
\label{e;local-property}
(\tau(x))(g) = \mu_M((g^{-1}x)\vert_M) \quad \forall x \in A^G, \forall g \in G,
\end{equation} 
where $(g^{-1}x)\vert_M$ denotes the restriction of the configuration $g^{-1}x$ to $M$.
Such a set $M$ is called a \emph{memory set} and the map $\mu_M \colon A^M \to B$ is called the associated  \emph{local defining map}.
\end{definition}

\begin{example}
\label{ex:identity-is-ca}
The identity map $\Id_{A^G} \colon A^G \to A^G$ is a cellular automaton with memory set $M = \{1_G\}$ and local defining map the identity map 
$\mu_M = \Id_{A} \colon A^{M} = A \to A$.
\end{example}

\begin{example}
If we fix an element $b_0 \in B$, then the constant map $\tau \colon A^G \to B^G$, defined by 
$\tau(x)(g) = b_0$ for all $x \in A^G$ and $g \in G$, is a cellular automaton with memory set $M = \varnothing$.
\end{example}

\begin{example}
If we fix an element $g_0 \in G$, then the map $\tau \colon A^G \to A^G$, defined by 
$\tau(x)(g) = x(g g_0)$ for all $x \in A^G$ and $g \in G$,
is a cellular automaton with memory set $M = \{g_0\}$. 
\end{example}

 \begin{example}[The majority action on $\Z$]
Let $G = \Z$, $A = \{0,1\}$, $M = \{-1,0,1\}$, and $\mu_M:A^M \equiv A^3 \to A$
defined by $\mu_M(a_{-1},a_0,a_1) = 1$ if $a_{-1}+a_0+a_1 \geq 2$ and $\mu_M(a_{-1},a_0,a_1)=0$ otherwise. 

\begin{figure}[h!]
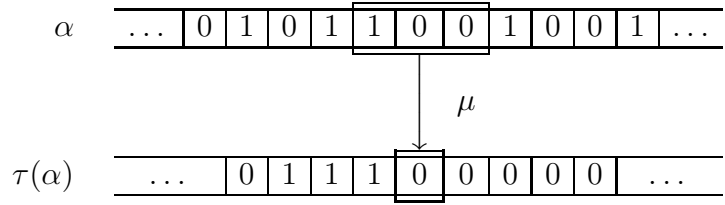

$$
\begin{array}{rcccccccccccccc}
\hhline{~~-----|===|-----}
\alpha&&\dots & \multicolumn{1}{|c}{0} & \multicolumn{1}{|c}{1} & \multicolumn{1}{|c}{0} & \multicolumn{1}{|c|}{1} & \multicolumn{1}{c}{1} & \multicolumn{1}{|c|}{0} & \multicolumn{1}{c|}{0} & \multicolumn{1}{c|}{1} & \multicolumn{1}{c|}{0} & \multicolumn{1}{c|}{0} & \multicolumn{1}{c|}{1} & \dots\\
%\hhline{~~-----~~~-----}
\cline{3-7}\cline{11-15}
\hhline{~~~~~~~|===|~~~~~}
&&&&&&&&  {\Bigg \downarrow} &\mu&&&&&\\
\hhline{~~------|=|------}
\tau(\alpha) & &\multicolumn{2}{c}{\dots} & \multicolumn{1}{|c}{0} & \multicolumn{1}{|c}{1} & \multicolumn{1}{|c}{1} & \multicolumn{1}{|c|}{1} & \multicolumn{1}{c|}{0} & \multicolumn{1}{c|}{0} & \multicolumn{1}{c|}{0} & \multicolumn{1}{c|}{0} & \multicolumn{1}{c|}{0} & \multicolumn{2}{c}{\dots}\\
%\hhline{~~------~------}
\cline{3-8}\cline{10-15}
\hhline{~~~~~~~~|=|~~~~~~}
\end{array}
$$
\caption{The cellular automaton defined by the majority action on $\Z$.}\label{fig:maj}
\end{figure}

%Note that the corresponding cellular automaton $\tau \colon A^\Z \to A^\Z$ is surjective %but not injective.
\end{example}

\begin{example}[Hedlund's marker \cite{hedlund}]
Let $G = \Z$, $A = \{0,1\}$, $M = \{-1,0,1,2\}$, and $\mu_M \colon A^M \equiv A^4 \to A$
defined by $\mu_M(a_{-1},a_0,a_1,a_2) = 1 - a_0$ if $(a_{-1},a_1,a_2) = (0,1,0)$ and $\mu_M(a_{-1},a_0,a_1,a_2) = a_0$ otherwise. The corresponding cellular automaton $\tau \colon A^\Z \to A^\Z$ is a nontrivial involution of $A^G$.

\begin{figure}[h!]
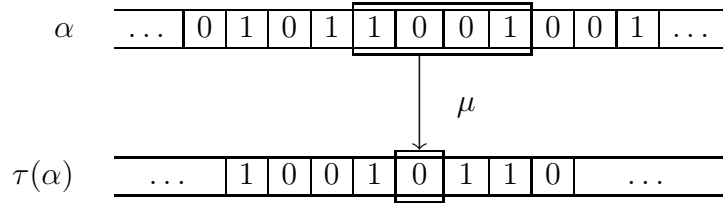

%%%%%%%%%%%%%%%%%%%%%%%%%%%%%%%%%%%%%%%%%%%%%%%%%%%%%%%%%%%%%%%%%%%%%%%
$$
\begin{array}{rcccccccccccccc}
\hhline{~~-----|====|----}
\alpha&&\dots & \multicolumn{1}{|c}{0} & \multicolumn{1}{|c}{1} & \multicolumn{1}{|c}{0} &
\multicolumn{1}{|c|}{1} & \multicolumn{1}{c}{1} & \multicolumn{1}{|c|}{0} & \multicolumn{1}{c|}{0} &
\multicolumn{1}{c|}{1} & \multicolumn{1}{c|}{0} & \multicolumn{1}{c|}{0} & \multicolumn{1}{c|}{1} &
\dots\\
%\hhline{~~-----~~~~----}
\cline{3-7}\cline{11-15}
\hhline{~~~~~~~|====|~~~~}
&&&&&&&&  {\Bigg \downarrow} &\mu&&&&&\\
\hhline{~~------|=|------}
\tau(\alpha) & &\multicolumn{2}{c}{\dots} & \multicolumn{1}{|c}{1} & \multicolumn{1}{|c}{0} &
\multicolumn{1}{|c}{0} & \multicolumn{1}{|c|}{1} & \multicolumn{1}{c|}{0} & \multicolumn{1}{c|}{1} &
\multicolumn{1}{c|}{1} & \multicolumn{1}{c|}{0} & \multicolumn{3}{c}{\dots}\\
%\hhline{~~------~------}
\cline{3-8}\cline{10-15}
\hhline{~~~~~~~~|=|~~~~~~}
\end{array}
$$
%%%%%%%%%%%%%%%%%%%%%%%%%%%%%%%%%%%%%%%%%%%%%%%%%%%%%%%%%%%%%%%%%%%%%%%
\caption{The cellular automaton defined by the Hedlund marker.}\label{fig:hed}
\end{figure}
\end{example}

\begin{example}[Conway's Game of Life]
Life was introduced by J. H. Conway in the 1970's and popularized by M. Gardner. From a theoretical computer science point of view, it is important because it has the power of a universal Turing machine, that is, anything that can be computed algorithmically can be computed by using the Game of Life.

Let $G = \Z^2$ and $A = \{0,1\}$. 
\emph{Life} is the cellular automaton 
$$
\tau \colon \{0,1\}^{\Z^2} \to \{0,1\}^{\Z^2}
$$
with memory set $M = \{-1,0,1\}^2 \subset \Z^2$ and local defining map $\mu \colon A^M \to A$ given by
\begin{equation}
\label{GoL}
\mu(y) =\left\{
\begin{array}{ll}
1 &\mbox{if}\left\{ \begin{array}{l} \mbox{ } \displaystyle\sum_{m
\in M} y(m) = 3 \\
\mbox{or } \displaystyle\sum_{m \in M} y(m) = 4 \mbox{ and } y((0,0)) = 1
\end{array} \right. \\
0 &\mbox{otherwise}
\end{array} \right.
\end{equation}
for all $y \in A^M$.
One thinks of an element $g$ of $G = \Z^2$ as a
``cell'' and the set $gM$ (we use multiplicative notation) as the set consisting of
its eight neighboring cells, namely the North, North-East, East, South-East, South,
South-West, West and North-West cells. We interpret state $0$ as corresponding  to the \emph{absence} of life while state $1$ corresponds to the  \emph{presence} of life.  We  thus refer to  cells in state $0$ as \emph{dead} cells and to cells  in state $1$ as  \emph{live} cells. Finally, if $x \in A^{\Z^2}$ is a configuration at time $t$, then $\tau(x)$ represents the evolution of the configuration  at time $t+1$.
Then the cellular automaton in \eqref{GoL} evolves as follows.
\begin{itemize}
\item {\it Birth:} a cell that is dead at time $t$ becomes alive at time $t+1$ if and only if  three of its neighbors are alive at time $t$.
\item{\it Survival:}  a cell that is alive at time $t$ will remain
alive at time $t+1$ if and only if it has  exactly two or three
live neighbors at time $t$.
\item{\it Death by loneliness:} a live cell that has at most one live
neighbor at time $t$ will be dead at time $t+1$.
\item{\it Death by overcrowding:} a cell that is alive at time $t$
and has four or more live neighbors at time $t$, will be
dead at time $t+1$.
\end{itemize}
Figure~\ref{fig:gol} illustrates all these cases.

\begin{figure}[h!]
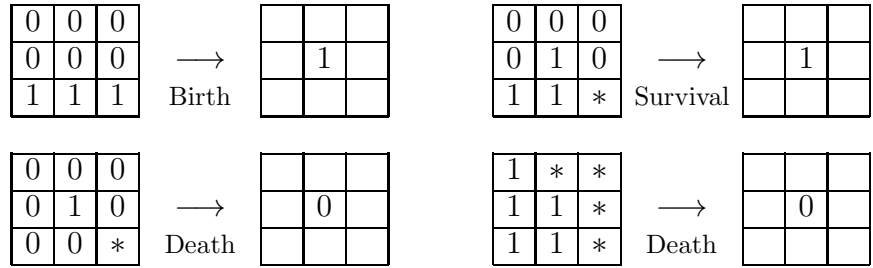

%%%%%%%%%%%%%%%%%%%%%%%%%%%%%%%%%%%%%%%%%%%%%%%%%%%%%%%%%%%%%%%%%%%%%%%
$$
\begin{array}{ccccccccccccccc}
\cline{1-3}\cline{5-7}\cline{9-11}\cline{13-15}
\multicolumn{1}{|c}{0} & \multicolumn{1}{|c}{0} & \multicolumn{1}{|c|}{0} & \phantom{\mbox{\footnotesize Survival}}&\multicolumn{1}{|c}{\phantom{1}} & \multicolumn{1}{|c}{} & \multicolumn{1}{|c|}{\phantom{1}}
&\phantom{lungo}&
\multicolumn{1}{|c}{0} & \multicolumn{1}{|c}{0} & \multicolumn{1}{|c|}{0} & &\multicolumn{1}{|c}{} & \multicolumn{1}{|c}{} & \multicolumn{1}{|c|}{}
\\
\cline{1-3}\cline{5-7}\cline{9-11}\cline{13-15}
\multicolumn{1}{|c}{0} & \multicolumn{1}{|c}{0} & \multicolumn{1}{|c|}{0} & \longrightarrow &\multicolumn{1}{|c}{} & \multicolumn{1}{|c}{1} & \multicolumn{1}{|c|}{}
&&
\multicolumn{1}{|c}{0} & \multicolumn{1}{|c}{1} & \multicolumn{1}{|c|}{0} & \longrightarrow &\multicolumn{1}{|c}{} & \multicolumn{1}{|c}{1} & \multicolumn{1}{|c|}{}
\\
\cline{1-3}\cline{5-7}\cline{9-11}\cline{13-15}
\multicolumn{1}{|c}{1} & \multicolumn{1}{|c}{1} & \multicolumn{1}{|c|}{1} & \mbox{\footnotesize Birth} &\multicolumn{1}{|c}{} & \multicolumn{1}{|c}{} & \multicolumn{1}{|c|}{}
&&
\multicolumn{1}{|c}{1} & \multicolumn{1}{|c}{1} & \multicolumn{1}{|c|}{*} & \mbox{\footnotesize Survival} &\multicolumn{1}{|c}{} & \multicolumn{1}{|c}{} & \multicolumn{1}{|c|}{}
\\
\cline{1-3}\cline{5-7}\cline{9-11}\cline{13-15}
\\
\cline{1-3}\cline{5-7}\cline{9-11}\cline{13-15}
\multicolumn{1}{|c}{0} & \multicolumn{1}{|c}{0} & \multicolumn{1}{|c|}{0} & &\multicolumn{1}{|c}{\phantom{1}} & \multicolumn{1}{|c}{} & \multicolumn{1}{|c|}{\phantom{1}}
&&
\multicolumn{1}{|c}{1} & \multicolumn{1}{|c}{*} & \multicolumn{1}{|c|}{*} & &\multicolumn{1}{|c}{\phantom{1}} & \multicolumn{1}{|c}{} & \multicolumn{1}{|c|}{\phantom{1}}
\\
\cline{1-3}\cline{5-7}\cline{9-11}\cline{13-15}
\multicolumn{1}{|c}{0} & \multicolumn{1}{|c}{1} & \multicolumn{1}{|c|}{0} & \longrightarrow &\multicolumn{1}{|c}{} & \multicolumn{1}{|c}{0} & \multicolumn{1}{|c|}{}
&&
\multicolumn{1}{|c}{1} & \multicolumn{1}{|c}{1} & \multicolumn{1}{|c|}{*} & \longrightarrow &\multicolumn{1}{|c}{} & \multicolumn{1}{|c}{0} & \multicolumn{1}{|c|}{}
\\
\cline{1-3}\cline{5-7}\cline{9-11}\cline{13-15}
\multicolumn{1}{|c}{0} & \multicolumn{1}{|c}{0} & \multicolumn{1}{|c|}{*} & \mbox{\footnotesize Death} &\multicolumn{1}{|c}{} & \multicolumn{1}{|c}{} & \multicolumn{1}{|c|}{}
&&
\multicolumn{1}{|c}{1} & \multicolumn{1}{|c}{1} & \multicolumn{1}{|c|}{*} & \mbox{\footnotesize Death} &\multicolumn{1}{|c}{} & \multicolumn{1}{|c}{} & \multicolumn{1}{|c|}{}
\\
\cline{1-3}\cline{5-7}\cline{9-11}\cline{13-15}
\end{array}
$$
%%%%%%%%%%%%%%%%%%%%%%%%%%%%%%%%%%%%%%%%%%%%%%%%%%%%%%%%%%%%%%%%%%%%%%%
\caption{The evolution of a cell in the Game of Life. The symbol $*$ represents any symbol in \{0,1\}.}\label{fig:gol}
\end{figure}

%Note that $\tau$ is not injective. It can
%be shown that $\tau$ is not surjective either.
\end{example}

 Observe that if $\tau \colon A^G \to B^G$ is a cellular automaton with memory set $M$ and local defining map $\mu_M$, then $\mu_M$ is entirely determined by $\tau$ and $M$ since, for all $y \in A^M$, we have
\begin{equation}
\label{e:mu-M}
\mu_M(y) = \tau(x)(1_G),
\end{equation}
where $x \in A^G$ is any configuration satisfying $x\vert_M = y$.
Moreover, every finite subset $M' \subset G$ containing $M$ is also a memory set for $\tau$ (with associated local defining map $\mu_{M'} \colon A^{M'} \to B$ given by $\mu_{M'}(y) = \mu_M(y\vert_M)$ for all $y \in A^{M'}$). In fact (see for example \cite[Section 1.5]{book}), the following holds. Every cellular automaton $\tau \colon A^G \to B^G$ admits a unique memory set $M_0 \subset G$ of minimal cardinality. Moreover, a
subset $M \subset G$ is a memory set for $\tau$ if and only
if $M_0 \subset M$. This  memory set $M_0$ is called the \emph{minimal}
memory set of $\tau$.
\par
From the definition, it easily follows that every cellular automaton  
$\tau \colon A^G \to B^G$ is $G$-equivariant, i.e., 
$$
\tau(gx) = g\tau(x) \quad \forall x \in A^G, \forall g \in G
$$
(cf. \cite[Proposition 1.4.6]{book}), and continuous with respect  to the prodiscrete topologies on $A^G$ and $B^G$ (cf. \cite[Proposition 1.4.8]{book}).

\subsection{Composition of cellular automata}
\label{ss:composition-ca}
The following result is well known. The proof we present here follows the one given 
in \cite[Remark 1.4.10]{book}.
An alternative proof will be given in Remark \ref{r:GCH-consequence} below and 
the result will be strengthened later in Proposition \ref{p:C-composition}.

\begin{proposition}
\label{p:composition}
Let $G$ be a group and let $A$, $B$ and $C$ be sets.
Suppose that  $\tau \colon A^G \to B^G$ and $\sigma \colon B^G \to C^G$ are cellular automata.
Then the composite map $\sigma \circ \tau \colon A^G \to C^G$ is a cellular automaton.
\end{proposition}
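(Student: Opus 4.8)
The plan is to produce memory sets and local defining maps for $\sigma \circ \tau$ explicitly from those of $\tau$ and $\sigma$, using the locality property \eqref{e;local-property} twice and the $G$-equivariance of cellular automata. Let $S \subset G$ be a memory set for $\tau$ with local defining map $\mu_S \colon A^S \to B$, and let $T \subset G$ be a memory set for $\sigma$ with local defining map $\nu_T \colon B^T \to C$. I claim that $M := TS = \{ts : t \in T, s \in S\}$ is a memory set for $\sigma \circ \tau$. This set is finite since $S$ and $T$ are finite.

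First I would unwind the definitions. Fix $x \in A^G$ and $g \in G$. By the locality of $\sigma$ applied to the configuration $\tau(x) \in B^G$,
\begin{equation*}
(\sigma(\tau(x)))(g) = \nu_T\big( (g^{-1}\tau(x))\vert_T \big).
\end{equation*}
Since $\tau$ is $G$-equivariant, $g^{-1}\tau(x) = \tau(g^{-1}x)$, so the value $(g^{-1}\tau(x))(t)$ for $t \in T$ equals $(\tau(g^{-1}x))(t)$, and applying the locality of $\tau$ (after noting $(\tau(z))(t) = \mu_S((t^{-1}z)\vert_S)$ for any $z$) we get $(\tau(g^{-1}x))(t) = \mu_S\big( (t^{-1}g^{-1}x)\vert_S \big) = \mu_S\big( ((gt)^{-1}x)\vert_S \big)$. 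For $t \in T$ and $s \in S$, the argument $(gt)^{-1}x$ restricted to $S$ only involves coordinates $x(gts)$ with $ts \in TS = M$; hence the entire expression $(\sigma(\tau(x)))(g)$ depends only on $(g^{-1}x)\vert_M$. This shows $\sigma \circ \tau$ satisfies \eqref{e;local-property} with memory set $M$, and along the way it exhibits the composite local defining map $\mu_M \colon A^M \to C$ as the composition $\nu_T \circ (\text{the map } A^M \to B^T \text{ sending } y \mapsto (t \mapsto \mu_S((t^{-1}\cdot y)\vert_S)))$ — well-defined precisely because $t^{-1}S \cdot$ lands inside $M$ after the appropriate reindexing.

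The only genuinely delicate point is bookkeeping with the shift conventions: one must be careful that the set $TS$ (rather than $ST$) is the correct one, and that restricting $(gt)^{-1}x$ to $S$ really does only read coordinates indexed by $gM$. I would handle this by writing out, for a fixed $t \in T$, the precise finite set of group elements $\{gts : s \in S\}$ that $(t^{-1}g^{-1}x)\vert_S$ depends on, and observing its union over $t \in T$ is exactly $gTS$. Everything else is routine: finiteness of $M$ is immediate, and no continuity or topological input is needed since we are arguing purely combinatorially from the locality axiom. Thus I expect the proof to be short, with the shift-convention arithmetic being the one spot where a sign or an inversion could slip.
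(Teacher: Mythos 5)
Your proposal is correct and follows essentially the same route as the paper's proof: apply the locality of $\sigma$, use $G$-equivariance of $\tau$ to rewrite $g^{-1}\tau(x)$ as $\tau(g^{-1}x)$, then apply the locality of $\tau$ to see that everything depends only on $x\vert_{gM}$ with $M$ equal to the product (memory set of $\sigma$)$\cdot$(memory set of $\tau$) — which is your $TS$ and the paper's $ST$, the difference being only that you swapped the names of the two letters. The paper merely makes your parenthetical local defining map explicit by introducing the projections $\pi_s$, the reindexing bijections $f_s$, and their product $\Phi$, so that $\kappa = \mu\circ\Phi$; your bookkeeping of the shift conventions is accurate.
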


\begin{proof}
Let $S$ (resp. $T$) be a memory set for $\sigma$ (resp. $\tau$) and
let $\mu \colon B^S \to C$ (resp. $\nu \colon A^T \to B$) be the corresponding local defining map.
Then the set $ST = \{st: s \in S, t \in T\} \subset G$ is finite.
We have $sT \subset ST$ for every $s \in S$. 
Consider, for each $s \in S$,  the projection map $\pi_s \colon A^{ST} \to A^{sT}$,
 the bijective map  $f_s \colon A^{sT} \to A^T$  defined by
$f_s(y)(t) = y(st)$ for all $y \in A^{sT}$ and $t \in T$, 
and the map $\varphi_s  \colon A^{ST} \to B$ given by
\begin{equation}
\label{e:varphi-s}
\varphi_s :=\nu \circ f_s \circ \pi_s.
\end{equation}
Finally, let
\begin{equation}
\label{e:Phi}
\Phi := \prod_{s \in S} \varphi_s \colon  A^{ST} \to B^S
\end{equation}
be the product map defined by $\Phi(z)(s) = \varphi_s(z)$ for all $z \in A^{ST}$.
\par
Let us show that $\sigma \circ \tau$ is a cellular automaton with memory set  $ST$ and
associated local defining map
\begin{equation}
\label{kappa}
\kappa := \mu \circ \Phi   \colon A^{ST} \to C.
\end{equation}
 Let $x \in A^G$. We first observe that, for all   $s \in S$ and   $t \in T$, 
we  have
\begin{align*}
  (s^{-1} x)(t) 
& =  x(st) \\
& =  x\vert_{ST}(st)\\
&  = \left((f_s \circ \pi_s)\left(x\vert_{ST}\right)\right)(t),
\end{align*}
so that
\begin{equation*}
(s^{-1} x)\vert_{T} = (f_s \circ \pi_s)(x\vert_{ST}).
\end{equation*}
It follows that
\begin{equation*}
\tau( x)(s) = \nu\left((s^{-1} x)\vert_{T}\right) = \nu\left(f_s(\pi_s\left( x\vert_{ST}\right))\right) = \varphi_s\left( x\vert_{ST}\right).
\end{equation*}
Thus, we have
\begin{equation}
\label{e:mu-mu1-mu2}
\left(\tau( x)\right)\vert_{S} = \Phi \left(x\vert_{ST}\right).
\end{equation}
We finally get
\begin{align*}
\label{e;mu-tau-circ-tau}
 \left((\sigma \circ \tau)(x)\right)(g) & = \sigma(\tau(x))(g) \\
& = \mu\left((g^{-1}\tau(x))\vert_{S}\right)\\
& = \mu(\tau(g^{-1}x)\vert_{S}) && \text{(by $G$-equivariance of $\tau$)}\\
& = \mu(\Phi\left((g^{-1}x)\vert_{ST}\right)) && \text{(by \eqref{e:mu-mu1-mu2})} \\
& = \kappa\left((g^{-1}x)\vert_{ST}\right) && \text{(by \eqref{kappa})}. 
 \end{align*}
This shows that $\sigma \circ \tau$ is a cellular automaton with  memory set $ST$ and associated local defining map $\kappa$.
\end{proof}

If we fix a group $G$, we deduce from Example \ref{ex:identity-is-ca} and Proposition 
\ref{p:composition} that the cellular automata $\tau \colon A^G \to B^G$ are the morphisms of a subcategory of the category of sets whose objects are all the sets of the form $A^G$.
We shall denote this subcategory by $\CA(G)$. 

 \subsection{The Curtis-Hedlund theorem}
When the alphabet $A$ is finite, one has the \emph{Curtis-Hedlund theorem} 
\cite{hedlund} (see also \cite[Theorem 1.8.1]{book}):

\begin{theorem}
\label{t:curtis-hedlund}
Let $G$ be a group, $A$ a finite set and $B$ a set. 
Let $\tau \colon A^G \to B^G$ be a map. Then the following conditions are equivalent:
\begin{enumerate}[\rm (a)]
\item
$\tau$ is a cellular automaton;
\item
$\tau$ is $G$-equivariant and continuous (with respect  to the prodiscrete topologies on $A^G$ and $B^G$). 
\end{enumerate}
\qed
\end{theorem}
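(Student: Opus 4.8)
The implication (a) $\Rightarrow$ (b) requires no finiteness assumption and has in fact already been recorded in Subsection \ref{ss:ca}: every cellular automaton is $G$-equivariant and continuous for the prodiscrete topologies. So the substance is the converse (b) $\Rightarrow$ (a), and this is where the hypothesis that $A$ is finite (hence $A^G$ compact) enters. The plan is to produce a memory set $M$ and a local defining map $\mu_M \colon A^M \to B$ directly from $\tau$, and then verify the defining identity \eqref{e;local-property}.

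First I would use continuity of $\tau$ at each point together with compactness of $A^G$ to extract a single uniform memory set. Concretely, consider the evaluation-at-$1_G$ map $x \mapsto \tau(x)(1_G) \in B$. Since $B$ carries the discrete topology, continuity of $\tau$ means that for each $x \in A^G$ this map is locally constant: there is a finite $\Omega_x \subset G$ such that $\tau(x)(1_G) = \tau(y)(1_G)$ whenever $y\vert_{\Omega_x} = x\vert_{\Omega_x}$, i.e. it is constant on the basic neighborhood $V(x,\Omega_x)$. The sets $V(x,\Omega_x)$ form an open cover of the compact space $A^G$, so finitely many of them, say $V(x_1,\Omega_{x_1}),\dots,V(x_n,\Omega_{x_n})$, cover $A^G$. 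Set $M := \Omega_{x_1} \cup \dots \cup \Omega_{x_n}$, a finite subset of $G$. The key claim is then that $x\vert_M = y\vert_M$ implies $\tau(x)(1_G) = \tau(y)(1_G)$: indeed $x$ lies in some $V(x_i,\Omega_{x_i})$, and since $M \supset \Omega_{x_i}$ the condition $x\vert_M = y\vert_M$ forces $y$ into the same $V(x_i,\Omega_{x_i})$, on which $x \mapsto \tau(x)(1_G)$ is constant.

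Given this claim, I would define $\mu_M \colon A^M \to B$ by $\mu_M(z) := \tau(x)(1_G)$ for any $x \in A^G$ with $x\vert_M = z$; the claim guarantees this is well defined, and it uses that $A$ is nonempty (if $A = \varnothing$ the statement is trivial since $A^G$ has at most one point). Finally I would check \eqref{e;local-property} using $G$-equivariance: for $x \in A^G$ and $g \in G$,
\[
(\tau(x))(g) = (g^{-1}\tau(x))(1_G) = \tau(g^{-1}x)(1_G) = \mu_M\bigl((g^{-1}x)\vert_M\bigr),
\]
where the middle equality is $G$-equivariance of $\tau$ and the last is the definition of $\mu_M$. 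Hence $\tau$ is a cellular automaton with memory set $M$ and local defining map $\mu_M$.

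The only genuine obstacle is the compactness step that converts the pointwise-varying neighborhoods $\Omega_x$ into one finite set $M$ working uniformly; everything else is bookkeeping with equivariance and the definitions. This is precisely the point where finiteness of $A$ is indispensable — without it $A^G$ need not be compact and no uniform memory set need exist.
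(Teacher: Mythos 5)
Your proposal is correct; note that the paper itself states this theorem without proof, deferring to \cite{hedlund} and \cite[Theorem 1.8.1]{book}, and your argument is precisely the standard one given there: extract, via compactness of $A^G$ and local constancy of $x \mapsto \tau(x)(1_G)$ into the discrete space $B$, a single finite memory set $M$ from a finite subcover, then transport the definition of $\mu_M$ around $G$ by equivariance. All steps, including the well-definedness of $\mu_M$ and the verification of \eqref{e;local-property}, check out.
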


As already mentioned in Subsection \ref{ss:ca}, the implication (a) $\Rightarrow$ (b) remains true  for $A$ infinite.
When the group $G$ is non-periodic (i.e., there exists $g \in G$ of infinite order) and $A$ is infinite, one can always construct a $G$-equivariant continuous self-mapping of $A^G$ which is not a cellular automaton (see \cite{periodic} and \cite[Example 1.8.2]{book}).

\begin{example}
For $G = A = \Z$, the map $\tau \colon A^G \to A^G$, defined by
$$\tau(x)(n) = x(x(n)+n),$$
is $G$-equivariant and continuous, but $\tau$ is not a cellular automaton.
\end{example}

\subsection{Uniform spaces and the generalized Curtis-Hedlund theorem}
Let $X$ be a set.
\par 
We denote by $\Delta_X$ the diagonal in $X \times X$, that is the set
$\Delta_X = \{ (x,x) : x \in X \}$.
\par
The \emph{inverse} $\overset{-1}{U}$ of a subset $U \subset X \times X$ is the subset of $X \times X$ defined by
$\overset{-1}{U} = \{ (x,y) : (y,x) \in U \}$.
%One says that $U$ is \emph{symmetric} if $\overset{-1}{U} = U$.
%Note that $U \cap \overset{-1}{U}$ is symmetric for any $U \subset X \times X$.
%\par  
We define the \emph{composite} $U \circ V$ of two subsets $U$ and $V$ of $X \times X$  by
$$
U \circ V = \{ (x,y): \text{ there exists  } z \in X \text{  such that  } (x,z) \in U \text{  and  } (z,y) \in V \}  \subset X \times X.
$$

\begin{definition}
Let $X$ be a set. A \emph{uniform structure} on $X$ is a non--empty set $\UU$ of subsets of $X \times X$ satisfying the following conditions:
\begin{enumerate}[(UN-1)]
\item if $U \in \UU$, then $\Delta_X \subset U$;
\item if $U \in \UU$ and $U \subset V \subset X \times X$, then $V \in \UU$;
\item if $U \in \UU$ and $V \in \UU$, then $U \cap V \in \UU$;
\item if $U \in \UU$, then $\overset{-1}{U} \in \UU$;
\item if $U \in \UU$, then there exists $V \in \UU$ such that 
$V \circ V \subset U$.
\end{enumerate}
The elements of $\UU$ are then called the \emph{entourages} of the uniform structure and the set $X$ is called a \emph{uniform space}.
\end{definition}

A subset $\BB \subset \UU$ is called a \emph{base} of $\UU$ if
for each $W \in \UU$ there exists $V \in \BB$ such that $V \subset W$.

Let $X$ and $Y$ be uniform spaces. A map $f \colon X \to Y$ is called
\emph{uniformly continuous} if it satisfies the following condition: 
for each entourage $W$ of $Y$, there exists an entourage $V$ of $X$ such that $(f \times f)(V) \subset W$. Here $f \times f$ denotes the map from $X \times X$ into $Y \times Y$ defined by $(f \times f)(x_1,x_2) = (f(x_1),f(x_2))$ for all $(x_1,x_2) \in X \times X$.

If $X$ is a uniform space, there is an induced topology on $X$ characterized by the fact that the neighborhoods of an arbitrary point $x \in X$ consist of the sets 
$U[x] = \{y \in X : (x,y) \in U\}$, where $U$ runs over all entourages of $X$.
This topology is Hausdorff if and only if the intersection of all the entourages of $X$ is reduced to the diagonal $\Delta_X $. Moreover, every uniformly continuous map $f \colon X \to Y$ is continuous with respect to the induced topologies on $X$ and $Y$.

\begin{example} 
If $(X,d_X)$ is a metric space, there is a natural uniform structure on $X$ whose entourages are the sets $U \subset X \times X$ satisfying the following condition:
there exists a real number $\varepsilon >0$ such that
$U$ contains all pairs $(x,y) \in X \times X$ such that  $d_X(x,y) < \varepsilon$.
The topology associated with this uniform structure is then the same as the topology induced by the metric.
If $(Y,d_Y)$ is another metric space, then a map $f \colon X \to Y$ is uniformly continuous
if and only if for every $\varepsilon > 0$ there exists $\delta > 0$ such that
$d_Y(f(x_1),f(x_2)) < \varepsilon$ whenever $d_X(x_1,x_2) < \delta$.
\end{example}

The theory of uniform spaces was developed by A. Weil in \cite{weil-uniforme}
(see e.g. \cite{bourbaki-top-gen}, \cite{james}, \cite[Appendix B]{book}).

Let us now return back to configuration spaces and cellular automata.
Let $G$ be a group and $A$ be a set.
We equip $A^G$ with its \emph{prodiscrete uniform structure}, that
is with the uniform structure admitting the sets
$$W(\Omega) := \{(x,y) \in A^G \times A^G: x\vert_\Omega = y\vert_\Omega\},$$
where $\Omega \subset G$ runs over all finites subsets of $G$, as a base of entourages.

We then have the following extension of the Curtis-Hedlund theorem:

\begin{theorem}[\cite{periodic}; see also Theorem 1.9.1 in \cite{book}]
\label{t:ca-equiv-uc}
Let $G$ be a group and let $A$ and $B$ be sets. Let $\tau \colon A^G \to B^G$ be a map.
Then the following conditions are equivalent:
\begin{enumerate}[\rm (a)]
\item
$\tau$ is a cellular automaton;
\item
$\tau$ is $G$-equivariant and uniformly continuous (with respect to the prodiscrete uniform structures on $A^G$ and $B^G$).
\end{enumerate}
\qed
\end{theorem}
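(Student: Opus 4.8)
The plan is to establish the two implications separately. The implication (a) $\Rightarrow$ (b) is a routine generalization of the corresponding part of the Curtis–Hedlund theorem (Theorem \ref{t:curtis-hedlund}), obtained by ``thickening'' entourages by the memory set; the implication (b) $\Rightarrow$ (a) is where uniform continuity — as opposed to mere continuity — genuinely enters, since it is precisely the uniformity that produces a \emph{single} finite memory set valid for all configurations simultaneously.

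For (a) $\Rightarrow$ (b), suppose $\tau$ is a cellular automaton with memory set $M$ and local defining map $\mu_M \colon A^M \to B$. We already know (Subsection \ref{ss:ca}) that $\tau$ is $G$-equivariant, so it remains to check uniform continuity, and for this it suffices to deal with a basic entourage $W(\Omega)$ of $B^G$, with $\Omega \subset G$ finite. I would take the finite set $\Omega M := \{\omega m : \omega \in \Omega,\ m \in M\} \subset G$ and show that $(x,y) \in W(\Omega M)$ forces $(\tau(x),\tau(y)) \in W(\Omega)$. Indeed, if $x\vert_{\Omega M} = y\vert_{\Omega M}$, then for every $\omega \in \Omega$ we have $\omega M \subset \Omega M$, hence $(\omega^{-1}x)\vert_M = (\omega^{-1}y)\vert_M$, and therefore $\tau(x)(\omega) = \mu_M((\omega^{-1}x)\vert_M) = \mu_M((\omega^{-1}y)\vert_M) = \tau(y)(\omega)$; thus $\tau(x)\vert_\Omega = \tau(y)\vert_\Omega$, as required.

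For (b) $\Rightarrow$ (a), suppose $\tau$ is $G$-equivariant and uniformly continuous. Applying uniform continuity to the single basic entourage $W(\{1_G\})$ of $B^G$, I obtain a finite subset $M \subset G$ such that $(x,y) \in W(M)$ implies $(\tau(x),\tau(y)) \in W(\{1_G\})$; equivalently, $\tau(x)(1_G)$ depends only on the restriction $x\vert_M$. This allows me to define a map $\mu_M \colon A^M \to B$ by $\mu_M(z) := \tau(x)(1_G)$ for any $x \in A^G$ with $x\vert_M = z$ — the restriction map $A^G \to A^M$ being surjective when $A \neq \varnothing$, and the value being independent of the chosen lift by the previous sentence. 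It then remains to verify the local rule \eqref{e;local-property}, and here $G$-equivariance does the work: for $g \in G$ and $x \in A^G$ one has $g^{-1}\tau(x) = \tau(g^{-1}x)$, so evaluating at $1_G$ gives $\tau(x)(g) = (g^{-1}\tau(x))(1_G) = \tau(g^{-1}x)(1_G) = \mu_M((g^{-1}x)\vert_M)$, which is exactly \eqref{e;local-property}. Hence $\tau$ is a cellular automaton with memory set $M$.

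The only genuinely delicate point is conceptual rather than computational: one must resist using continuity alone. Plain continuity of $\tau$ at the basic neighbourhood $V(\tau(x),\{1_G\})$ yields, for each $x$ separately, a finite set $M_x$ depending on $x$, and such pointwise-local behaviour need not come from one finite memory set — this is exactly the obstruction exhibited by the map $\tau(x)(n) = x(x(n)+n)$ on $\Z^{\Z}$. Uniform continuity eliminates the dependence on $x$ in a single step, after which everything reduces to bookkeeping with the shift action. The degenerate case $A = \varnothing$ can be disposed of separately, since then $A^G = \varnothing$ and both conditions hold vacuously.
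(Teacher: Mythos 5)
Your argument is correct and is exactly the standard proof of the generalized Curtis--Hedlund theorem: thickening the entourage $W(\Omega)$ to $W(\Omega M)$ for (a) $\Rightarrow$ (b), and extracting a single memory set from uniform continuity applied to $W(\{1_G\})$, then propagating by $G$-equivariance, for (b) $\Rightarrow$ (a). The paper itself omits the proof, citing \cite{periodic} and \cite[Theorem~1.9.1]{book}, and the argument given there is essentially the one you wrote (your only elided step is passing from the entourage $V$ supplied by uniform continuity to a basic entourage $W(M) \subset V$, which is immediate).
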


\begin{remark}
\label{r:GCH-consequence}
Since the composite of two $G$-equivariant (resp. uniformly continuous) maps is $G$-equivariant (resp. uniformly continuous), 
we immediately deduce from Theorem \ref{t:ca-equiv-uc} an alternative proof of the fact
that the composite of two cellular automata is a
cellular automaton (cf. Proposition \ref{p:composition}).
\end{remark}

\subsection{Reversible cellular automata}
\label{ss:reversible}
 Given a group $G$ and two sets $A$ and $B$, a cellular automaton $\tau \colon A^G \to B^G$ is called \emph{reversible} if $\tau$ is bijective and its inverse map $\tau^{-1} \colon B^G \to A^G$ is also a cellular automaton.
 Observe that the reversible cellular automata $\tau \colon A^G \to B^G$ are precisely the isomorphisms of the category $\CA(G)$ introduced at the end of Subsection \ref{ss:composition-ca}.
It immediately  follows from Theorem \ref{t:ca-equiv-uc}
that a bijective cellular automaton $\tau \colon A^G \to B^G$ is reversible if and only if the inverse map $\tau^{-1} \colon B^G \to A^G$ is uniformly continuous with respect to the prodiscrete uniform structures on $A^G$ and $B^G$.
\par
When the alphabet $A$ is finite, every bijective cellular automaton $\tau \colon A^G \to B^G$ is reversible by compactness of $A^G$. On the other hand, when $A$ is infinite and the group $G$ is non-periodic, there exist bijective cellular automata $\tau \colon A^G \to A^G$ that are not reversible (see \cite{periodic},  \cite[Example 1.10.3]{book}, and the examples given at the end of the present paper).
 
%\begin{example}
%\label{ex:not-reversible}
%Let $K$ be a field and consider the $K$-algebra
%  $A = K[[t]]$ consisting of all formal power series in the indeterminate $t$ with coefficients in $K%$.
%Take $G = \Z$.
%Then the cellular automaton $\tau \colon A^G \to A^G$ defined by
%$$
%\tau(x)(n) = x(n) - tx(n + 1)
%$$
%for all $x \in A^G$ and $n \in \Z$,
%is bijective but not reversible.
% \end{example}

\subsection{Induction and restriction of cellular automata}
\label{ss:induction-restriction}
Let $G$ be a group, $A$ and $B$ two sets, and $H$ a subgroup of $G$.
\par
Suppose that a cellular automaton $\tau \colon A^G \to B^G$ admits a memory set $M$ such 
that $M \subset H$. Let $\mu_M^G \colon A^M \to B$ denote the associated local defining map.
Then the map $\tau_H \colon A^H \to B^H$ defined by
$$
\tau_H(y)(h) = \mu_M^G((h^{-1}y)\vert_M)
\quad \text{  for all  } y \in A^H \text{ and } h \in H,
$$
is a cellular automaton over the group $H$   with memory set $M$ and local defining map $\mu_M^H:=\mu_M^G$.
One says that $\tau_H$ is the cellular automaton obtained by \emph{restriction} of  
$\tau$ to $H$.
\par
Conversely, suppose that  $\sigma \colon A^H \to B^H$  is a cellular automaton with memory set $N \subset H$ and local defining map
$\nu_N^H \colon A^N \to B$. Then the map $\sigma^G \colon A^G \to B^G$ defined by
$$
\sigma^G(x)(g) = \nu_N^H((g^{-1}x)\vert_N)
\quad \text{ for all } x \in A^G \text{  and  } g \in G,
$$
is a cellular automaton over the group $G$   with memory set $N$ and local defining map $\nu_N^G := \nu_N^H$.
One says that $\sigma^G$ is the cellular automaton obtained by \emph{induction} of $\sigma$ to $G$.
\par
It immediately follows from their definitions that induction and restriction are operations one inverse to the other in the sense that one has $(\tau_H)^G = \tau$ 
and $(\sigma^G)_H = \sigma$ for every cellular automaton  $\tau \colon A^G \to B^G$ over $G$ admitting a memory set contained in $H$, and every cellular automaton $\sigma \colon A^H \to B^H$ over $H$.
We shall use the following result,
 established in \cite[Theorem 1.2]{induction}
(see also \cite[Proposition 1.7.4]{book}).

\begin{theorem}
\label{t:induction}
Let $G$ be a group, $A$ and $B$ two sets, and $H$ a subgroup of $G$. 
 Suppose that  $\tau \colon A^G \to B^G$ is a cellular automaton over $G$ admitting
a memory set contained in $H$
and let $\tau_H \colon A^H \to B^H$ denote the cellular automaton over $H$ obtained by restriction. 
Then the following holds:
\begin{enumerate}[{\rm (i)}]
 \item %1
$\tau$ is injective if and only if $\tau_H$ is injective;
 \item %2
$\tau$ is surjective if and only if $\tau_H$ is surjective;
 \item %3
$\tau$ is bijective if and only if $\tau_H$ is bijective;
\item  %4
$\tau(A^G)$ is closed in $B^G$ for the prodiscrete topology if and only if $\tau_H(A^H)$ is closed in $B^H$ for the prodiscrete topology. 
\end{enumerate}
\qed
\end{theorem}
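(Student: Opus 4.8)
The plan is to identify $\tau$, up to a canonical homeomorphism, with a product of copies of $\tau_H$ indexed by the left cosets of $H$ in $G$, and then to deduce each of the four assertions from elementary properties of products of maps and of product topologies.

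I would start by fixing a complete set $T \subset G$ of representatives for the left cosets of $H$ in $G$, so that $G = \bigsqcup_{t \in T} tH$ and $(t,h) \mapsto th$ is a bijection $T \times H \to G$. This bijection reindexes $\prod_{g \in G} A = \prod_{t \in T}\prod_{h \in H} A$ and thus yields a bijection $\Theta_A \colon A^G \to \prod_{t \in T} A^H$, sending $x$ to the family $(x_t)_{t \in T}$ defined by $x_t(h) := x(th)$; since a basic open set of either space constrains only finitely many coordinates and the identification $T \times H \cong G$ matches these up, $\Theta_A$ is a homeomorphism for the prodiscrete topologies, and likewise for the analogous map $\Theta_B \colon B^G \to \prod_{t \in T} B^H$. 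The crucial computation is the identity $(\tau(x))_t = \tau_H(x_t)$ for all $x \in A^G$ and $t \in T$: for $h \in H$ and $m \in M$ one has $((th)^{-1}x)(m) = x(thm) = x_t(hm) = (h^{-1}x_t)(m)$, and since $M \subset H$ this gives $((th)^{-1}x)\vert_M = (h^{-1}x_t)\vert_M$, whence $(\tau(x))_t(h) = \tau(x)(th) = \mu_M^G\bigl(((th)^{-1}x)\vert_M\bigr) = \mu_M^G\bigl((h^{-1}x_t)\vert_M\bigr) = \tau_H(x_t)(h)$. Therefore $\Theta_B \circ \tau = \bigl(\prod_{t \in T}\tau_H\bigr)\circ\Theta_A$, where $\prod_{t \in T}\tau_H$ is the map $(y_t)_t \mapsto (\tau_H(y_t))_t$; in particular $\Theta_B$ carries $\tau(A^G)$ bijectively onto $\prod_{t \in T}\tau_H(A^H)$.

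With this in hand, (i)--(iii) are routine. If $A = \varnothing$ then $A^G = A^H = \varnothing$ and all four statements hold vacuously on both sides, so I may assume $A \neq \varnothing$, hence $A^H \neq \varnothing$; as $T \neq \varnothing$, a product of copies of a single map $f$ is injective (resp.\ surjective) if and only if $f$ is. For injectivity: a product of injections is injective, and conversely a pair witnessing non-injectivity of $\tau_H$ extends, by freezing the remaining coordinates to a fixed element of $A^H$, to a pair witnessing non-injectivity of the product. For surjectivity: a product of surjections is surjective, and conversely a preimage of a constant family under $\prod_{t \in T}\tau_H$ provides, in any single coordinate, a preimage under $\tau_H$. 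Statement (iii) is the conjunction of (i) and (ii). For (iv), since $\Theta_B$ is a homeomorphism it suffices to prove that, for a nonempty subset $S \subset B^H$ (here $S = \tau_H(A^H)$, the empty case being trivial), the product $\prod_{t \in T}S$ is closed in $(B^H)^T$ if and only if $S$ is closed in $B^H$. One direction is immediate from $\prod_{t \in T}S = \bigcap_{t \in T}\pi_t^{-1}(S)$. For the other, fix $j \in T$, choose $s_t \in S$ for each $t \neq j$, and let $\Sigma := \{(x_t)_t : x_t = s_t \text{ for all } t \neq j\}$; the projection $\pi_j$ restricts to a homeomorphism $\Sigma \to B^H$ carrying $\Sigma \cap \prod_{t \in T}S$ onto $S$, so if $\prod_{t \in T}S$ is closed then $\Sigma \cap \prod_{t \in T}S$ is closed in $\Sigma$ and hence $S$ is closed in $B^H$.

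The only part that requires real care is (iv): one must notice that the factors of $\prod_{t \in T}\tau_H(A^H)$ are all the same nonempty set (once the degenerate case $A = \varnothing$ has been set aside), which is precisely what makes the slice argument legitimate, and one must allow $T$ to be infinite — but the argument above uses nothing beyond continuity of the coordinate projections and the fact that the slice $\Sigma$, obtained by freezing all coordinates but one, is homeomorphic to $B^H$ via $\pi_j$, both of which hold for an arbitrary index set. Everything else is just the coset bookkeeping packaged in the identity $(\tau(x))_t = \tau_H(x_t)$.
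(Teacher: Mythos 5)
Your proof is correct. Note that the paper does not actually prove Theorem \ref{t:induction}: it is stated with a reference to \cite[Theorem 1.2]{induction} and \cite[Proposition 1.7.4]{book}, and the argument given there is exactly your decomposition --- choosing left coset representatives $T$ so that $A^G$ is canonically homeomorphic to $\prod_{t\in T}A^H$ and $\tau$ is conjugated to the product map $\prod_{t\in T}\tau_H$, from which (i)--(iv) follow. Your handling of the degenerate case $A=\varnothing$ and the slice argument for the closedness statement (iv), which is the one point where the nonemptiness of the common factor $\tau_H(A^H)$ genuinely matters, are both sound.
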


% SECTION 3
\section{Cellular automata over concrete categories}
\label{s:ca-concrete-categories}

We assume some familiarity with the basic concepts of category theory 
 (the reader is refered to \cite{maclane-category} and  \cite{abstract-and-concrete} for a detailed introduction to category theory). We adopt the following notation:
 
 \begin{itemize}
 \item
 $\Set$ is the category where objects are sets and morphisms are maps between them;
 \item
 $\Setf$ is the full subcategory of $\Set$ whose objects are finite sets;
 \item
 $\Grp$ is the category where objects are groups and morphisms are group homomorphisms between them;
 \item
 $\Rng$ is the category where objects are rings and morphisms are ring homomorphisms (all rings are assumed to be unital and ring homomorphisms are required to preserve the unity elements);
 \item
 $\Fld$ is the full category of $\Rng$ whose objects are fields (a field is a nontrivial commutative ring in which every nonzero element is invertible);
 \item
 $R$-$\Mod$ is the category where objects are left modules over a given ring $R$ and morphisms are $R$-linear maps between them;
 \item
 $R$-$\Modfg$ is the full subcategory of $R$-$\Mod$ whose objects are finitely-generated left  modules over a given ring $R$;
 \item
 $K$-$\Vect = K$-$\Mod$ is the category where objects are vector spaces over a given field $K$ and morphisms are $K$-linear maps between them;
 \item
 $K$-$\Vectfd = K$-$\Modfg$ is the full subcategory of $K$-$\Vect$ whose objects are 
 finite-dimensional vector spaces over a given field $K$;
\item
$K$-$\Aal$ is the category where objects are affine algebraic sets over a given field $K$ and morphisms are regular maps between them.
Recall that  $A$ is an \emph{affine algebraic set} over a field $K$  if $A \subset K^n$, for some integer $n \geq 0$, and $A$ is  
 the set of common zeroes of some set of polynomials $S \subset K[t_1,t_2,\dots,t_n]$.
Recall also that  a map $f \colon A \to B$  from an affine algebraic set $A \subset K^n$ to another affine algebraic set  $B \subset K^m$ is called \emph{regular} 
if $f$ is the restriction of some polynomial map $P \colon K^n \to K^m$
(see for instance \cite{bump}, \cite{kraft}, \cite{milne}, \cite{reid} for an introduction to affine algebraic geometry);
\item
$\Top$ is the category where objects are topological spaces and morphisms are continuous maps between them;
\item
$\CHT$ is the subcategory of $\Top$ whose objects are compact Hausdorff topological spaces;
\item
$\Man$ is the subcategory of $\Top$ whose objects are compact topological manifolds
(a \emph{topological manifold} is a connected Hausdorff topological space in which every point admits an open neighborhood homeomorphic to $\R^n$ for some integer $n \geq 0$).   
\end{itemize}
 
 If $\CC$ is a category and $A$ is a $\CC$-object, we shall denote by $\Id_A$ the identity morphism of $A$.

 \subsection{Concrete categories}
\label{s:cc}
 A \emph{concrete category} is a pair $(\CC,U)$, where $\CC$ is a category and
$U \colon \CC \to \Set$ is a faithful functor from $\CC$ to the category $\Set$.
The functor $U$ is called the \emph{forgetful functor} of the concrete category $(\CC,U)$.
We will  denote a concrete category $(\CC,U)$ simply by $\CC$ if its forgetful functor $U$ is clear from the context.
 \par
Let $(\CC,U)$ be a concrete category.
 If $A$ is a $\CC$-object , the set $U(A)$ is called the \emph{underlying set} of $A$.
Similarly, if $f \colon A \to B$ is a $\CC$-morphism, the map $U(f) \colon U(A) \to U(B)$ is called the \emph{underlying map} of $f$.
Note that two distinct $\CC$-objects   may have the same underlying set.
However, the faithfulness of $U$ implies that a $\CC$-morphism  is entirely determined by its underlying map once its source and target objects are given.
\par
Every subcategory   of a concrete category is itself a concrete category.
More precisely, if $(\CC,U)$ is a concrete category, $\DD$ is a subcategory of $\CC$, and $U\vert_\DD$ denotes the restriction of $U$ to $\DD$, then
$(\DD,U\vert_\DD)$ is a concrete category.
\par 
The categories $\Set$, $\Grp$,   $\Rng$, $R$-$\Mod$, $K$-$\Aal$ and  $\Top$,   equipped with their obvious forgetful functor to $\Set$, provide basic examples of concrete categories. 
On the other hand, it can be shown that the homotopy category $\hTop$,
where objects are topological spaces and morphisms are homotopy classes of continuous maps between them, is \emph{not concretizable}, in the sense that there exists no faithful functor 
$U \colon \hTop \to \Set$ (see \cite{freyd} and \cite[Exercice 5J]{abstract-and-concrete}).
\par
Let $\CC$ be a category. Recall that a \emph{product} of a family $(A_i)_{i \in I}$ of $\CC$-objects  is a pair $(P,(\pi_i)_{i \in I})$, where $P$ is a $\CC$-object   and $\pi_i \colon P \to A_i$, $i \in I$,  are $\CC$-morphisms   satisfying the following universal property: 
 if $B$ is a $\CC$-object equipped with $\CC$-morphisms $f_i \colon B \to A_i$, $i \in I$,  then there is a unique $\CC$-morphism $g \colon B \to P$ such that $f_i = \pi_i \circ g$ for all $i \in I$.
When it exists, such a product is essentially unique, in the sense that if 
$(P,(\pi_i)_{i \in I})$ and $(P',(\pi_i')_{i \in I})$ are two products of the family $(A_i)_{i \in I}$ then there exists a unique $\CC$-isomorphism
$\varphi \colon P \to P'$ such that $\pi_i = \pi_i' \circ \varphi$ for all $i \in I$.
By a common abuse, one writes $P = \prod_{i \in I} A_i$ and $g = \prod_{i \in I} f_i$. 
\par
One says that a category $\CC$ \emph{has  products} 
(cf. \cite[Definition 10.29.(1)]{abstract-and-concrete}), 
or that $\CC$ satisfies condition (P), if every set-indexed family $(A_i)_{i \in I}$ of $\CC$-objects admits a product in $\CC$.
\par
 One says that a category $\CC$ \emph{has finite products}
   (cf. \cite[Definition 10.29.(2)]{abstract-and-concrete}), 
or that $\CC$ satisfies (FP), if every finite family $(A_i)_{i \in I}$ of $\CC$-objects admits a product in $\CC$.
By means of an easy induction, one  shows that  a category $\CC$ has finite products if and only if it
satisfies the two following conditions: 
(1)  $\CC$ has a terminal object (such an object is then the product of the empty family of $\CC$-objects),
(2) any pair of $\CC$-objects admits a product (cf. \cite[Proposition 10.30]{abstract-and-concrete}).
 \par
 It is clear from these definitions that (P) implies (FP).
 \par
 The categories $\Set$, $\Grp$, $\Rng$, $R$-$\Mod$, $\Top$ and $\CHT$ all satisfy (P).
On the other hand,
the category $\Setf$ satisfies (FP) but not (P).
The category
$R$-$\Modfg$ satisfies (FP) but, unless $R$ is a zero ring,  does not satisfy (P).
The category $K$-$\Aal$ of affine algebraic sets and regular maps over a given field $K$ also satisfies (FP) but not (P).
The category $\Fld$   
 does not satisfy (FP) (it does not even admit a terminal object).
 \par
Suppose now that $(\CC,U)$ is a concrete category.
 Given a family $(A_i)_{i \in I}$ of $\CC$-objects, one says that the pair $(P,(\pi_i)_{i \in I})$ is a \emph{concrete product} of the family $(A_i)_{i \in I}$
   if $(P,(\pi_i)_{i \in I})$ is a product of the family $(A_i)_{i \in I}$ in $\CC$ and $(U(P),(U(\pi_i))_{i \in I})$ is a product of the family $(U(A_i))_{i \in I}$ in $\Set$    
   (cf. \cite[Definition 10.52]{abstract-and-concrete}).
\par
One  says that $(\CC,U)$ has \emph{concrete products}, 
 or that $(\CC,U)$ satisfies (CP), if every set-indexed family of $\CC$-objects admits a concrete product (cf. \cite[Definition 10.54]{abstract-and-concrete}).
\par 
One  says that $(\CC,U)$ has \emph{concrete finite products}, or that $(\CC,U)$ satisfies (CFP), if every finite family of $\CC$-objects admits a concrete product.
By using an induction argument, one gets a characterization of (CFP) analogous to the one given above for (FP).
More precisely, one can show that a concrete category $(\CC,U)$ has concrete finite products if and only if it
satisfies the two following conditions: 
(1)  $\CC$ has a terminal object whose underlying set is reduced to a single element 
(such an object is then the concrete product of the empty family of $\CC$-objects),
(2) any pair of $\CC$-objects admits a concrete product.
\par
It is clear from these definitions that (CP) implies (CFP).
\par
The concrete categories $\Set$, $\Grp$, $\Rng$, $R$-$\Mod$, $\Top$ and $\CHT$ all satisfy (CP).
 The concrete categories $\Setf$,  $R$-$\Modfg$ (for $R$  a nonzero ring), $K$-$\Aal$, and $\Man$ satisfy (CFP) but not  (CP), since they  do not satisfy (P).
 Here is an example of a  concrete category that satisfies (P) but not (CFP).
 
\begin{example}
Fix a set $X$ and consider the category $\CC$ defined as follows.
The objects of $\CC$ are all the pairs $(A,\alpha)$, where $A$ is a set and $\alpha \colon A \to X$ is a map.
If $(A,\alpha)$ and $(B,\beta)$ are $\CC$-objects, the morphisms from $(A,\alpha)$ to 
$(B,\beta)$ consist of all maps $f \colon A \to B$ such that $\alpha = \beta \circ f$.
It is clear that $\CC$ is a concrete category for the forgetful functor $U \colon \CC \to \Set$
that associates with each object $(A,\alpha)$ the set $A$ and with each morphism 
$f \colon (A,\alpha) \to (B,\beta)$ the map $f \colon A \to B$.
The category $\CC$ satisfies (P).
Indeed, the product of  a set-indexed family  $((A_i,\alpha_i))_{i \in I}$ of $\CC$-objects is the 
\emph{fibered product} 
$(F ,\eta)$, where
$$
F := \{(a_i)_{i \in I}  :    \alpha_i(a_i) = \alpha_j(a_j) \text{  for all  } i,j \in I \} \subset \prod_{i \in I} A_i
$$
with the natural projections maps $\pi_i \colon F \to A_i$. 
 The pair $(X,\Id_X)$, where $\Id_X \colon X \to X$ is the identity map, is clearly a terminal $\CC$-object.
 Since any terminal object in a concrete category satisfying (CFP) must be reduced to a single element,
we deduce that the concrete category $(\CC,U)$ does not satisfy (CFP) unless $X$ is reduced to a single element (observe that $(\CC,U)$ is identical to $\Set$ when $X$ is reduced to a single element). 
\end{example}

Recall that a morphism $f \colon A \to B$ in a category $\CC$ is called a \emph{retraction} if it is right-invertible, i.e., if there exists a $\CC$-morphism $g \colon B \to A$ such that 
$f \circ g = \Id_{B}$.
We have the following elementary lemma.

\begin{lemma}
\label{l:retraction-product}
Let $\CC$ be a category. Let $A$ and $B$ be two $\CC$-objects admitting a $\CC$-product $A \times B$ with first projection   $\pi \colon A \times B \to A$.
Then the following conditions are equivalent:
\begin{enumerate}[\rm (a)]
\item
$\pi$ is a retraction;
\item
there exists a $\CC$-morphism $f \colon A \to B$.
\end{enumerate}
\end{lemma}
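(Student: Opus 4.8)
The plan is to prove the two implications separately, using only the universal property of the product together with elementary categorical facts about retractions.

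\textbf{Proof of (b) $\Rightarrow$ (a).} Suppose there exists a $\CC$-morphism $f \colon A \to B$. I want to exhibit a $\CC$-morphism $s \colon A \to A \times B$ with $\pi \circ s = \Id_A$. The natural candidate is the morphism into the product determined by the pair $(\Id_A, f)$: since $A$ is a $\CC$-object equipped with the morphisms $\Id_A \colon A \to A$ and $f \colon A \to B$, the universal property of the product $A \times B$ (with projections $\pi \colon A \times B \to A$ and the second projection, call it $\pi' \colon A \times B \to B$) yields a unique $\CC$-morphism $s \colon A \to A \times B$ such that $\pi \circ s = \Id_A$ and $\pi' \circ s = f$. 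The first of these equations says precisely that $s$ is a right inverse of $\pi$, so $\pi$ is a retraction.

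\textbf{Proof of (a) $\Rightarrow$ (b).} Conversely, suppose $\pi$ is a retraction, so there is a $\CC$-morphism $g \colon A \to A \times B$ with $\pi \circ g = \Id_A$. Composing with the second projection $\pi' \colon A \times B \to B$, we obtain a $\CC$-morphism $f := \pi' \circ g \colon A \to B$, which gives condition (b). (Note this direction does not even use $\pi \circ g = \Id_A$; merely having a morphism $A \to A \times B$ suffices, and such a morphism always exists when $\pi$ is a retraction.)

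I do not expect any genuine obstacle here: the statement is a formal consequence of the definition of a categorical product, and the only mild subtlety is notational, namely being careful to name the second projection $\pi' \colon A \times B \to B$ (which exists as part of the product data even though the statement only names the first projection $\pi$) so that the pairing morphism and its composite are well-defined. One should also remark in passing that the equivalence does not require the concreteness of $\CC$ nor any hypothesis such as (FP) or (P) beyond the standing assumption in the statement that the particular product $A \times B$ exists.
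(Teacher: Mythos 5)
Your proof is correct and follows essentially the same route as the paper's: the forward direction composes a section $g$ of $\pi$ with the second projection $\pi'$ to get $f = \pi' \circ g$, and the converse direction takes the pairing morphism $\Id_A \times f$ (which you construct explicitly via the universal property) as the required section. The only difference is expository detail; no further comment is needed.
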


\begin{proof}
Let $\pi' \colon A \times B \to B$ denote the second projection.
If $g \colon A \to A \times B$ is a $\CC$-morphism such that $\pi \circ g = \Id_{A}$, then $f:=\pi' \circ g$ is a $\CC$-morphism from $A$ to $B$.
This shows that (a) implies (b).
 Conversely, if (b) is satisfied then $g:= \Id_{A} \times f \colon A \to A \times B$ satisfies $\pi \circ g = \Id_{A}$. 
\end{proof}

We say that a concrete category $(\CC,U)$ satisfies (CFP+) provided  it satisfies (CFP) and the following additional condition:

\begin{itemize}
\item[(C+)]
Given any  $\CC$-object $A$ and any $\CC$-object $B$ with $U(B) \not= \varnothing$,
the first  projection morphism $\pi \colon A \times B \to A$ is a retraction.
\end{itemize}

 \begin{proposition}
 Let $(\CC,U)$ be a concrete category satisfying (CFP). Then the following conditions are equivalent:
 \begin{enumerate}[\rm (a)]
 \item
 $(\CC,U)$ satisfies (CFP+);
 \item
 for any $\CC$-object $A$ and any $\CC$-object $B$ with $U(B) \not= \varnothing$, there exists a $\CC$-morphism $f \colon A \to B$;
 \item
 if $T$ is a terminal $\CC$-object and $B$ is any  $\CC$-object with $U(B) \not= \varnothing$, then there exists a $\CC$-morphism $g \colon T \to B$.
 \end{enumerate}
 \end{proposition}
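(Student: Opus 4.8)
The plan is to establish the cycle of implications (a) $\Rightarrow$ (b) $\Rightarrow$ (c) $\Rightarrow$ (a), using Lemma~\ref{l:retraction-product} at each turn to pass between the retraction condition (C+) and the mere existence of $\CC$-morphisms. Throughout I would keep in mind that, since $(\CC,U)$ satisfies (CFP), every pair of $\CC$-objects admits a product and $\CC$ possesses a terminal object whose underlying set is a singleton; these are the only existence facts that will be needed.

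For (a) $\Rightarrow$ (b): given any $\CC$-object $A$ and any $\CC$-object $B$ with $U(B) \neq \varnothing$, the product $A \times B$ exists, condition (C+) says that its first projection $\pi \colon A \times B \to A$ is a retraction, and Lemma~\ref{l:retraction-product} then hands us a $\CC$-morphism $A \to B$. The implication (b) $\Rightarrow$ (c) is immediate, as (c) is nothing but the special case of (b) in which $A$ is taken to be a terminal object $T$, which exists by (CFP).

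For (c) $\Rightarrow$ (a): since (CFP) is part of the hypothesis, it suffices to verify (C+). Fix a $\CC$-object $A$ and a $\CC$-object $B$ with $U(B) \neq \varnothing$, and let $T$ be a terminal $\CC$-object. Terminality provides a $\CC$-morphism $A \to T$, hypothesis (c) provides a $\CC$-morphism $T \to B$, and composing these yields a $\CC$-morphism $A \to B$; Lemma~\ref{l:retraction-product} then shows that the first projection $\pi \colon A \times B \to A$ is a retraction. Hence (C+) holds, so $(\CC,U)$ satisfies (CFP+).

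I do not expect any step to present a real obstacle: the proposition is essentially a formal unwinding of Lemma~\ref{l:retraction-product}, and the only point deserving a moment's care is to confirm that all the products and the terminal object invoked in the argument are genuinely supplied by the standing assumption (CFP).
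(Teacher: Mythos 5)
Your proof is correct and follows essentially the same route as the paper's: Lemma~\ref{l:retraction-product} converts (C+) into the existence of morphisms $A \to B$, the implication towards (c) is the trivial specialization, and the return direction factors a morphism $A \to B$ through the terminal object supplied by (CFP). The only cosmetic difference is that you arrange the implications as a cycle (a) $\Rightarrow$ (b) $\Rightarrow$ (c) $\Rightarrow$ (a), whereas the paper proves (a) $\Leftrightarrow$ (b) and (b) $\Leftrightarrow$ (c); the mathematical content is identical.
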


\begin{proof}
The equivalence of (a) and (b) follows from Lemma \ref{l:retraction-product}.
Condition (b) trivially implies condition (c).
To prove that (c) implies (b), it suffices to observe that by composing a $\CC$-morphism
$h \colon A \to T$ and a $\CC$-morphism $g \colon T \to B$, we get a $\CC$-morphism $f:=g \circ h \colon A \to B$. 
\end{proof}

Note that condition (c) in the preceding proposition is satisfied in particular when $\CC$ admits a
 \emph{zero object} $0$ (i.e., an object that is both initial and terminal).
 Hence every concrete category satisfying (CFP) also satisfies (CFP+) if it admits a zero object.
This is the case for  the categories $\Grp$, $\Rng$,   $R$-$\Mod$, and $R$-$\Modfg$.
\par
On the other hand, the categories $\Set$, $\Setf$, $K$-$\Aal$, $\Top$, $\CHT$, and $\Man$  also satisfy (CFP+)  although they do not admit  zero objects.
  Indeed, in any of these categories,
the only initial object is the empty one while the terminal objects are the singletons, and if $T$ is a singleton and $B$ an arbitrary object, then any map from $U(T)$ to $U(B)$ is the underlying map of a morphism from $T$ to $B$.

\subsection{Cellular automata over concrete categories}

From now on, in a concrete category, we shall use the same symbol to denote an object (resp. a morphism)   and its underlying set (resp. its underlying map).

\begin{proposition}
\label{p:carac-aca}
Let $G$ be a group and let $\CC $ be a concrete category satisfying (CFP+).
Let $A$ and $B$ be two $\CC$-objects.
Suppose that  $\tau \colon A^G \to B^G$ is a cellular automaton.
Then the following conditions are equivalent:
\begin{enumerate}[\rm (a)]
\item
there exists a memory set $M$ of $\tau$ such that 
the associated local defining map  $\mu_M \colon A^M \to B$ is a $\CC$-morphism; 
\item
for any memory set $M$ of $\tau$,
the associated local defining map  $\mu_M \colon A^M \to B$ is a $\CC$-morphism. 
\end{enumerate}
\end{proposition}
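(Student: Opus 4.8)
The plan is to first dispose of the implication (b)~$\Rightarrow$~(a), which is immediate since every cellular automaton admits at least one memory set by definition. The substance is (a)~$\Rightarrow$~(b), and for this I would isolate the following comparison principle: whenever $N$ is a memory set of $\tau$ and $N \subset N'$ is a finite subset of $G$ (so that $N'$ is again a memory set, with $\mu_{N'} = \mu_N \circ \rho$, where $\rho = \rho_{N',N} \colon A^{N'} \to A^N$ denotes the restriction map $y \mapsto y\vert_N$), the maps $\mu_N$ and $\mu_{N'}$ are simultaneously $\CC$-morphisms or not. Granting this, given a memory set $M$ with $\mu_M$ a $\CC$-morphism and an arbitrary memory set $M'$, one argues along the chain $M \subset M \cup M' \supset M'$: applying the principle with $N = M$ and $N' = M \cup M'$ shows that $\mu_{M \cup M'}$ is a $\CC$-morphism, and applying it with $N = M'$ and $N' = M \cup M'$ then shows that $\mu_{M'}$ is a $\CC$-morphism, as desired.

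It therefore remains to prove the comparison principle, and both of its directions come from analysing $\rho$ through the lens of concrete products. Since $\CC$ has concrete finite products, $A^N$ and $A^{N'}$ are genuine $\CC$-objects (concrete powers of $A$), and the partition $N' = N \sqcup (N' \setminus N)$ gives a canonical concrete product decomposition $A^{N'} \cong A^N \times A^{N' \setminus N}$ under which $\rho$ becomes the first projection $\pi \colon A^N \times A^{N'\setminus N} \to A^N$. In particular $\rho$ is a $\CC$-morphism; alternatively, $\rho$ is the $\CC$-morphism induced by the universal property of the product $A^N$ from the family of projections $A^{N'} \to A$ indexed by $N$, and since all products in sight are concrete its underlying map is precisely $y \mapsto y\vert_N$. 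Combining the fact that $\rho$ is a $\CC$-morphism with the identity $\mu_{N'} = \mu_N \circ \rho$ yields at once the ``enlarging'' direction: if $\mu_N$ is a $\CC$-morphism then so is $\mu_{N'}$.

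For the ``shrinking'' direction I would invoke condition (C+). One may assume $U(A) \neq \varnothing$ (if $A$ has empty underlying set then $A^G$ is empty and the statement degenerates); then $U(A^{N'\setminus N}) = U(A)^{N'\setminus N} \neq \varnothing$, so (C+) tells us that the first projection $\pi$, hence $\rho$, is a retraction in $\CC$: there is a $\CC$-morphism $s \colon A^N \to A^{N'}$ with $\rho \circ s = \Id_{A^N}$. Then, for every $y \in A^N$, we have $\mu_{N'}(s(y)) = \mu_N(s(y)\vert_N) = \mu_N(\rho(s(y))) = \mu_N(y)$, so $\mu_N$ and $\mu_{N'} \circ s$ have the same underlying map; by faithfulness of the forgetful functor, $\mu_N = \mu_{N'} \circ s$, which exhibits $\mu_N$ as a composite of $\CC$-morphisms whenever $\mu_{N'}$ is one. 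This completes the comparison principle and hence the proof.

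The one genuine obstacle is this ``shrinking'' step: more precisely, realizing the restriction $\rho$, up to the concrete product isomorphism, as an honest projection onto a direct factor, so that (C+) becomes applicable. This rests on the routine associativity and commutativity bookkeeping for concrete products, on faithfulness of $U$, and on first setting aside the harmless degenerate case $U(A) = \varnothing$; everything else is a formal manipulation of universal properties and of the identity $\mu_{N'}(y) = \mu_N(y\vert_N)$.
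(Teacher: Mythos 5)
Your proof is correct, and the underlying mechanism is the same as the paper's: the local defining maps associated to nested memory sets are related by composition with a restriction map, that restriction map is a projection of a concrete finite product (hence a $\CC$-morphism, which gives your ``enlarging'' direction for free), and condition (C+) supplies a section of such a projection, which is exactly what transfers the $\CC$-morphism property from a larger memory set to a smaller one. The one genuine difference is the choice of intermediate memory set: you go \emph{up} through the union $M \cup M'$ and then back down to $M'$, whereas the paper goes \emph{down} to the minimal memory set $M_0 \subset M \cap M'$, writes $\mu_M = \mu_{M_0}\circ\pi$ and $\mu_{M'} = \mu_{M_0}\circ\pi'$, and uses a section $f$ of $\pi$ to get $\mu_{M'} = \mu_M \circ f \circ \pi'$. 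Your route buys a small simplification --- it needs only the elementary fact that a finite superset of a memory set is a memory set, not the existence of the minimal memory set --- at the cost of one extra (trivial) application of the enlarging step; both use (C+) exactly once and both set aside the degenerate case $A = \varnothing$ in the same way. One cosmetic remark: in your shrinking step the appeal to faithfulness of $U$ is unnecessary (and slightly misplaced), since $\mu_N$ is a priori only a map, and your computation shows directly that it \emph{is} the underlying map of the $\CC$-morphism $\mu_{N'}\circ s$, which is all that is required.
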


Note that, in the above statement,
$A^M$ is a $\CC$-object since $A$ is a $\CC$-object, $M$ is finite, and $\CC$ satisfies (CFP+) and hence (CFP).
On the other hand, it may happen that   the configuration spaces $A^G$ and $B^G$ 
are not $\CC$-objects (although this is the case if $\CC$ satisfies (CP)).
 
\begin{proof}[Proof of Proposition \ref{p:carac-aca}]
We can assume $A \not= \varnothing$.
 Suppose that the local defining map $\mu_M \colon A^M \to B$ is a $\CC$-morphism for some memory set $M$.  
Let $M'$ be another memory set and let us show that the associated local defining map 
$\mu_{M'} \colon A^{M'} \to B$ is also a $\CC$-morphism.
Let $M_0$ denote the minimal memory set of $\tau$.
 Recall that we have $M_0 \subset M \cap M'$.
After identifying the $\CC$-object $A^M$ (resp.~$A^{M'}$) with
the $\CC$-product $A^{M_0} \times A^{M \setminus M_0}$ (resp.~$A^{M_0} \times A^{M' \setminus M_0}$), consider the projection map $\pi \colon A^M \to A^{M_0}$ (resp.~$\pi' \colon A^{M'} \to A^{M_0}$).
We then have
\begin{equation}
\label{e;projectioon-minimal-m-s}
\mu_M = \mu_{M_0} \circ \pi \quad \text{ and } \quad \mu_{M'} = \mu_{M_0} \circ \pi'.
\end{equation}
 By condition (CFP+), the projection $\pi$ is a retraction, so that  there exists a $\CC$-morphism 
$f \colon A^{M_0} \to A^M$ such that
$\pi \circ f = \Id_{A^{M_0}}$. Using \eqref{e;projectioon-minimal-m-s}, 
we get
\[
\mu_{M'} = \mu_{M_0} \circ \pi' = \mu_{M_0} \circ \Id_{A^{M_0}} \circ \ \pi' =
\mu_{M_0} \circ \pi \circ f \circ \pi' = \mu_M \circ f \circ \pi'.
\]
Thus, the map $\mu_{M'}$ may be written as  the composite of three $\CC$-morphisms and therefore is a $\CC$-morphism.
  \end{proof}

\begin{definition}
Let $G$ be a group and let $\CC$ be a concrete category satisfying (CFP+).
Let $A$ and $B$ be two $\CC$-objects.
We say that a cellular automaton $\tau \colon A^G \to B^G$ is a 
$\CC$-\emph{cellular automaton} provided it satisfies one of the 
equivalent conditions of Proposition \ref{p:carac-aca}.
\end{definition}

\begin{example}
\label{e:identity}
Let  $G$ be a group and let $\CC$ be a concrete category satisfying (CFP+).
Let $A$ be a $\CC$-object. Then the identity map
$\Id_{A^G} \colon A^G \to A^G$   is the cellular automaton with memory set $M = \{1_G\}$ and
local defining map $\mu_M = \Id_A \colon A^M = A \to A$ 
(cf. Example \ref{ex:identity-is-ca}). As $\Id_A$ is a $\CC$-morphism, we deduce that $\Id_{A^G}$ is a $\CC$-cellular automaton.
\end{example}

\begin{example}[The Discrete Laplacian]
Let $G$ be a group, $A = \R$, and $S \subset G$ a nonempty finite subset. The map
$\Delta_S \colon \R^G \to \R^G$, defined by
$$(\Delta_S x)(g) = x(g) - \frac{1}{\vert S \vert} \sum_{s \in S} x(gs)$$
for all $x \in \R^G$ and $g \in G$, is a cellular automaton (with memory set
$M = S \cup \{1_G\}$ and associated local defining map $\mu_M \colon \R^M \to \R$
given by $\mu_M(y) = y(1_G) - \frac{1}{\vert S \vert} \sum_{s \in S} y(s)$ for
all $y \in \R^M$). Since $\R$ is a finite dimensional vector space over itself and
$\mu_M$ is $\R$-linear, we have that $\Delta_S$ is a $\CC$-cellular automaton for
$\CC = \R$-$\Vectfd$.
%Note that $\Delta_S$ is never injective since all constant
%configurations belong to its kernel. 
%It can be shown (see \cite{laplace}; see also \cite{dodziuk}) that $\Delta_S$ is surjective %if and only if the subgroup generated by $S$ is infinite.
\end{example}

\begin{example} 
Let $G = \Z$ and let $A = K$ be any field.
Then the map $\tau \colon K^\Z \to K^\Z$, defined by
$$\tau(x)(n) = x(n + 1) - x(n)^2$$ for all $x \in K^\Z$ and $n \in \Z$,
is an $\CC$-cellular automaton (with memory set $M = \{0,1\}$ and associated local defining
map $\mu_M \colon A^M \to A$ given by $\mu(y) = y(1) - y(0)^2$ for all $y \in A^M$) for $\CC = K$-$\Aal$.
Observe that $\tau$ is not a $\CC$-cellular automaton for $\CC = K$-$\Vect$ unless
$K \cong \Z/2\Z$ is the field with two elements.
%Note that $\tau$ is not injective. It is surjective if $K = \C$ but not surjective
%for $K = \R$ (cf. \cite{algebraic}).
\end{example}

%\begin{example} Let $G = \Z$ and let $A$ be an affine algebraic group over a field $K$ %(e.g. $A = \SL_n(K)$). Then the map $\tau \colon A^\Z \to A^\Z$, defined by %$$\tau(x)(n)=x(n)^{-1}x(n+1)$$ for all $x \in A^\Z$ and $n \in \Z$,
%is an $\Alg_K$-cellular automaton with memory set $M = \{0,1\}$. Note that $\tau$ is always %surjective and that, unless $A$ is a trivial group, $\tau$ is never injective.
%\end{example}

\begin{example}
Let $G = \Z$. Let also $\sph^1 = \R/\Z$ denote the \emph{unit circle} and, for $n \geq 1$, denote by $\tore^n = \R^n/\Z^n = (\sph^1)^n$ the $n$-\emph{torus}.
With each continuous map $f \colon \tore^{m+1} \to \sph^1$, $m \geq 0$, we can associate the cellular automaton $\tau \colon (\sph^1)^\Z \to (\sph^1)^\Z$ with memory set $M = \{0,1,\ldots,m\}$ and local defining map $\mu_M = f$. Thus we have
$$
\tau(x)(n) = f(x(n),x(n+1),\ldots, x(n+m))
$$ 
for all $x \in (\sph^1)^\Z$ and $n \in \Z$.
Then $\tau$ is a $\CC$-cellular automaton for $\CC = \Man$.
\end{example}

\begin{example}[Arnold's cat cellular automaton]
Let $G = \Z$. Let also $A = \sph^1 $ and $B = \sph^1 \times \sph^1 = \tore^2$
and consider the map $\tau \colon A^\Z \to B^\Z$ defined by
$$
\tau(x)(n) = (2x(n)+x(n+1), x(n)+x(n+1))
$$
for all $x \in A^\Z$. Then $\tau$ is a $\CC$-cellular automaton for $\CC = \Man$.
\end{example}

Given sets $A$ and $B$, a subgroup $H$ of a group $G$, and a cellular automaton 
$\tau \colon A^G \to B^G$ admitting a memory set contained in $H$, we have defined in
Subsection \ref{ss:induction-restriction} the cellular automaton $\tau_H \colon A^H \to B^H$ obtained by restriction of 
$\tau$ to $H$. We also introduced the converse operation, namely induction.
 It turns out that both restriction and induction of cellular automata preserve the property of being a   $\CC$-cellular automaton. More precisely, we have the following result. 

\begin{proposition}
\label{p:C-induction}
Let $G$ be a group and let $H$ be a subgroup of $G$. Let also $\CC$ be a category satisfying (CFP+), and let $A$ and $B$ be two 
$\CC$-objects.
Suppose that  $\tau \colon A^G \to B^G$ is a cellular automaton over $G$ admitting
a memory set contained in $H$.
Let $\tau_H \colon A^H \to B^H$ denote the cellular automaton over $H$ obtained by restriction. 
Then  $\tau$ is a $\CC$-cellular automaton if and only if $\tau_H$ is a $\CC$-cellular automaton.
 \end{proposition}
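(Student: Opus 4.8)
The plan is to deduce the statement from Proposition \ref{p:carac-aca} together with the single structural fact, recorded in Subsection \ref{ss:induction-restriction}, that restriction and induction of cellular automata leave the local defining maps \emph{unchanged}. Concretely: if $M \subset H$ is a memory set of $\tau$ with associated local defining map $\mu_M^G \colon A^M \to B$, then by construction $\tau_H$ admits the same $M$ as a memory set, with associated local defining map $\mu_M^H := \mu_M^G$ (literally the same map); and conversely, since $(\tau_H)^G = \tau$, every memory set $N$ of $\tau_H$ — which is automatically a subset of $H$ — is a memory set of $\tau$ with local defining map $\nu_N^G = \nu_N^H$. Thus the memory sets of $\tau$ contained in $H$ are exactly the memory sets of $\tau_H$, and corresponding local defining maps coincide. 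Before starting I would make two routine remarks: as in the proof of Proposition \ref{p:carac-aca} one may assume $A \neq \varnothing$; and since $\CC$ satisfies (CFP+), hence (CFP), the set $A^M$ is a $\CC$-object for every finite $M$, so that all the statements about $\CC$-morphisms $A^M \to B$ below make sense.

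With this in hand both implications are short. For the forward direction, assuming $\tau$ is a $\CC$-cellular automaton, I would fix a memory set $M \subset H$ of $\tau$ (one exists by hypothesis); condition (b) of Proposition \ref{p:carac-aca} applied to $\tau$ says $\mu_M^G$ is a $\CC$-morphism, hence $\mu_M^H$, being equal to $\mu_M^G$, is a $\CC$-morphism, and then condition (a) of Proposition \ref{p:carac-aca} applied to $\tau_H$ yields that $\tau_H$ is a $\CC$-cellular automaton. For the converse, assuming $\tau_H$ is a $\CC$-cellular automaton, I would take any memory set $N$ of $\tau_H$ (it has one, namely the $M$ above); by condition (b) of Proposition \ref{p:carac-aca} applied to $\tau_H$ its local defining map $\nu_N^H$ is a $\CC$-morphism, and since $N$ is also a memory set of $\tau$ with $\nu_N^G = \nu_N^H$, condition (a) of Proposition \ref{p:carac-aca} applied to $\tau$ shows that $\tau$ is a $\CC$-cellular automaton.

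The argument is essentially bookkeeping, and the only point that requires attention is invoking Proposition \ref{p:carac-aca} in the correct direction in each half: one uses the implication (a)$\Rightarrow$(b) on the side where the conclusion is already known, in order to free up a convenient memory set (one contained in $H$, respectively one of $\tau_H$), and the implication (b)$\Rightarrow$(a) on the side where the conclusion is wanted, so that the memory set one controls can be matched on both sides. No genuine categorical difficulty appears beyond this, precisely because $\tau$ and $\tau_H$ share their local defining maps verbatim.
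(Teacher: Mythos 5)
Your proof is correct and follows essentially the same route as the paper's: both rest on the observation that a memory set $M \subset H$ of $\tau$ is also a memory set of $\tau_H$ with literally the same local defining map, after which the equivalence is immediate from the definition of a $\CC$-cellular automaton (i.e., from Proposition \ref{p:carac-aca}). Your version merely makes explicit which direction of that proposition is invoked in each implication, which the paper leaves implicit.
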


\begin{proof}
If $M$ is a memory set of $\tau$ contained in $H$, then $M$ is also a memory set for $\tau_H$. Moreover,
$\tau$ and $\tau_H$ have the same associated local defining map 
$\mu_M \colon A^M \to B$ (cf. Subsection \ref{ss:induction-restriction}). 
Therefore, the statement follows immediately from the definition of a $\CC$-cellular automaton. 
\end{proof}

\begin{proposition}
\label{p:global-CC-equiv-local}
Let $G$ be a group and let $\CC$ be a concrete category satisfying (CP) and (C+) (and hence (CFP+)).
Let $A$ and $B$ be $\CC$-objects and let $\tau \colon A^G \to B^G$ be a cellular automaton.
Then the following conditions are equivalent:
\begin{enumerate}[\rm (a)]
\item
$\tau$ is a $\CC$-morphism;
\item
$\tau$ is a $\CC$-cellular automaton.
\end{enumerate}
\end{proposition}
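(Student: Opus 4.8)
The plan is to prove the two implications separately. Throughout, I would lean on the hypothesis (CP): it guarantees that the configuration spaces $A^G = \prod_{g \in G} A$ and $B^G = \prod_{g\in G} B$, as well as $A^M = \prod_{m \in M} A$ for any finite $M \subset G$, are concrete products in $\CC$. In particular each of these is a $\CC$-object whose underlying set is the usual set of configurations, and the coordinate projections (such as $\pi_g^A \colon A^G \to A$ and $\pi_g^B \colon B^G \to B$, $g \in G$) are $\CC$-morphisms; so the assertion that $\tau$ is a $\CC$-morphism makes sense, and it makes sense to speak of a $\CC$-cellular automaton since (CP) and (C+) give (CFP+). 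I would also freely identify $A^G$ with the concrete product $A^M \times A^{G \setminus M}$, the first projection being the restriction morphism $x \mapsto x\vert_M$. As usual the degenerate case $A = \varnothing$ is trivial, so I would assume $A \neq \varnothing$ from the start.

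For (b) $\Rightarrow$ (a): assume $\tau$ is a $\CC$-cellular automaton and fix a memory set $M$ whose local defining map $\mu_M \colon A^M \to B$ is a $\CC$-morphism. Since $B^G$ is a concrete product, by its universal property it suffices to show that, for each $g \in G$, the component $\pi_g^B \circ \tau \colon A^G \to B$ is a $\CC$-morphism; the $\CC$-morphism into $B^G$ that these induce then has underlying map $\tau$, so $\tau$ is a $\CC$-morphism. Fix $g$. For $m \in M$ the assignment $x \mapsto (g^{-1}x)(m) = x(gm)$ is the projection $\pi_{gm}^A$, a $\CC$-morphism; by the universal property of the concrete product $A^M$ these assemble into a $\CC$-morphism $\rho_g \colon A^G \to A^M$ with underlying map $x \mapsto (g^{-1}x)\vert_M$. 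Since $(\pi_g^B \circ \tau)(x) = \tau(x)(g) = \mu_M\bigl((g^{-1}x)\vert_M\bigr)$ by \eqref{e;local-property}, we obtain $\pi_g^B \circ \tau = \mu_M \circ \rho_g$, a composite of $\CC$-morphisms. Note that this direction uses only (CP).

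For (a) $\Rightarrow$ (b): assume $\tau \colon A^G \to B^G$ is a $\CC$-morphism and pick any memory set $M$ of $\tau$. The point is to produce a $\CC$-morphism section $s \colon A^M \to A^G$ of the restriction morphism $\rho \colon A^G \to A^M$. Under the identification $A^G = A^M \times A^{G \setminus M}$, $\rho$ is the first projection; since $A \neq \varnothing$ the underlying set of $A^{G \setminus M}$ is nonempty (a nonempty power of $A$, or a one-point set if $G = M$), so condition (C+) applies and says precisely that $\rho$ is a retraction. Let $s \colon A^M \to A^G$ be a $\CC$-morphism with $\rho \circ s = \Id_{A^M}$; on underlying sets this reads $s(y)\vert_M = y$ for all $y \in A^M$. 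Then \eqref{e:mu-M} gives $\mu_M(y) = \tau(s(y))(1_G) = \bigl(\pi_{1_G}^B \circ \tau \circ s\bigr)(y)$, so $\mu_M = \pi_{1_G}^B \circ \tau \circ s$ is a composite of $\CC$-morphisms and hence a $\CC$-morphism. Thus $\tau$ is a $\CC$-cellular automaton.

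The single essential use of (C+) — and the one thing to get right — is the construction of the section $s$ in the last step; everything else is a routine unwinding of the concrete product structure. The only care needed is with the small cases $A = \varnothing$ (reduced away at the outset) and $G = M$ (where $\rho$ is the identity, equivalently $A^{G \setminus M}$ is a one-point set and (C+) still applies), after which no separate argument is required.
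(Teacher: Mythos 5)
Your proof is correct and follows essentially the same route as the paper: for (b)~$\Rightarrow$~(a) you express each component $x \mapsto \tau(x)(g)$ as $\mu_M$ composed with the $\CC$-morphism $x \mapsto (g^{-1}x)\vert_M$ (the paper factors this same map through the coordinate permutation $x \mapsto g^{-1}x$ followed by $\pi_M$, a purely cosmetic difference), and for (a)~$\Rightarrow$~(b) you use (C+) to produce a section of the restriction $A^G \to A^M$ and write $\mu_M = \pi_{1_G}^B \circ \tau \circ s$, exactly as in the paper. Your explicit handling of the edge cases $A = \varnothing$ and $G = M$ is a small bonus of care the paper leaves implicit.
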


Note that, in the preceding statement,   $A^G$ and $B^G$ are $\CC$-objects since $\CC$ satisfies (CP).

\begin{proof}[Proof of Proposition \ref{p:global-CC-equiv-local}]
Let $M$ be a memory set for $\tau$.
We then have
\begin{equation}
\label{e:local}
(\tau(x))(g) = (\mu_M \circ \pi_M)(g^{-1}x)  \quad \forall x \in A^G, \forall g \in G,
\end{equation}
where $\pi_M \colon A^G \to A^M$ is the projection morphism. 
By definition,  $\tau$ is a $\CC$-cellular automaton if and only if  $\mu_M$ is a $\CC$-morphism.
\par
Suppose first that $\mu_M$ is a $\CC$-morphism.
For each $g \in G$, the self-map of $A^G$ given by $x \mapsto g^{-1}x$ is a $\CC$-morphism since it just permutes coordinates of the $\CC$-product $A^G$. On the other hand, the projection $\pi_M \colon A^G \to A^M$ is also a $\CC$-morphism.
Therefore, we deduce from \eqref{e:local} that the map from $A^G$ to $B$ given by $x \mapsto \tau(x)(g)$ is a $\CC$-morphism for each $g \in G$.
It follows that $\tau \colon A^G \to B^G$ is a $\CC$-morphism.
\par
Conversely, suppose that $\tau$ is a $\CC$-morphism.
Let us show that $\tau$ is a $\CC$-cellular automaton.
We can assume $A \not= \varnothing$.
Denote by $p \colon A^G \to A^{\{1_G\}} = A$ the projection $\CC$-morphism $x \mapsto x(1_G)$.
By applying \eqref{e:local}
with $g = 1_G$, we get
\begin{equation}
\label{e:local_at-id}
(p \circ \tau)(x)  = \mu_M(y)
\end{equation} 
for all  $x \in A^G$ and $y \in A^M$ with $x\vert_M = y$.
As $\CC$ satisfies (C+), the projection $\CC$-morphism $\pi \colon A^G = A^M \times A^{G \setminus M} \to A^M$ is a retraction. Therefore there exists a $\CC$-morphism 
$f \colon A^M \to A^G$ such that $\pi \circ f = \Id_{A^M}$.
We then deduce  from \eqref{e:local_at-id} that $\mu_M = p \circ \tau \circ f$. Consequently, $\mu_M$ is a $\CC$-morphism. This shows that $\tau$ is a $\CC$-cellular automaton. 
\end{proof}

\begin{examples}
1) Take $\CC = R$-$\Mod$.
Given two left $R$-modules $A$ and $B$, a cellular automaton  $\tau \colon A^G \to B^G$ is a 
$\CC$-cellular automaton if and only if $\tau$ is $R$-linear with respect to the product 
$R$-module structures on $A^G$ and $B^G$.
\par
2) Take $\CC = \Top$.
Given two topological spaces $A$ and $B$, a cellular automaton $\tau \colon A^G \to B^G$ is 
a $\CC$-cellular automaton if and only if $\tau$ is continuous with respect to the product topologies on $A^G$ and $B^G$ (in general, these topologies are coarser than the prodiscrete topologies).
\end{examples}
 
\begin{proposition}
\label{p:C-composition}
Let $G$ be a group. Let $\CC$ be a concrete category satisfying (CFP+), and  let $A,B$ and $C$ be $\CC$-objects. 
Suppose that  $\tau \colon A^G \to B^G$ and $\sigma \colon B^G \to C^G$ 
are $\CC$-cellular automata. Then $\sigma \circ \tau \colon A^G \to C^G$ is a $\CC$-cellular automaton.
\end{proposition}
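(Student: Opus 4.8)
The plan is to reuse the explicit construction of the composite local defining map from the proof of Proposition \ref{p:composition} and simply check that, under the hypotheses here, every ingredient is a $\CC$-morphism. Let $S$ be a memory set for $\sigma$ with local defining map $\mu \colon B^S \to C$, and let $T$ be a memory set for $\tau$ with local defining map $\nu \colon A^T \to B$. Since $\tau$ and $\sigma$ are $\CC$-cellular automata, by Proposition \ref{p:carac-aca} we may assume $\mu$ and $\nu$ are $\CC$-morphisms (and the choice of memory set is irrelevant). By the proof of Proposition \ref{p:composition}, the set $ST \subset G$ is a memory set for $\sigma \circ \tau$, with local defining map $\kappa = \mu \circ \Phi \colon A^{ST} \to C$, where $\Phi = \prod_{s \in S} \varphi_s \colon A^{ST} \to B^S$ and $\varphi_s = \nu \circ f_s \circ \pi_s$, with $\pi_s \colon A^{ST} \to A^{sT}$ the projection and $f_s \colon A^{sT} \to A^T$ the coordinate-relabelling bijection $f_s(y)(t) = y(st)$.

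First I would note that, since $\CC$ satisfies (CFP+), hence (CFP), each of the finite powers $A^{ST}$, $A^{sT}$, $A^T$, $B^S$ appearing above is a $\CC$-object (a concrete finite product of copies of $A$, resp. $B$), and the statement of the Proposition makes sense. Next, for each $s \in S$, the projection $\pi_s \colon A^{ST} \to A^{sT}$ is a $\CC$-morphism because it is (up to the essential uniqueness of concrete products) one of the canonical projection morphisms of the $\CC$-product $A^{ST}$, after grouping the factors indexed by $ST$ as those indexed by $sT$ and those indexed by $ST \setminus sT$. The relabelling map $f_s \colon A^{sT} \to A^T$ is a $\CC$-isomorphism: both $A^{sT}$ and $A^T$ are concrete products of $|T|$ copies of $A$, and $f_s$ is the unique morphism induced by the bijection $t \mapsto st$ between index sets, which is invertible with inverse induced by $st \mapsto t$. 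Hence $\varphi_s = \nu \circ f_s \circ \pi_s$ is a composite of $\CC$-morphisms and so is a $\CC$-morphism.

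Then, since $(B^S,(\mathrm{pr}_s)_{s\in S})$ is a concrete product, the morphisms $\varphi_s \colon A^{ST}\to B$ induce, by the universal property, a unique $\CC$-morphism $\Phi \colon A^{ST} \to B^S$ with $\mathrm{pr}_s\circ\Phi = \varphi_s$ for all $s\in S$; on underlying sets this is exactly the map $z \mapsto (\varphi_s(z))_{s\in S}$ of \eqref{e:Phi}, because $(U(B^S),(U(\mathrm{pr}_s))_{s\in S})$ is the set-theoretic product. Finally $\kappa = \mu \circ \Phi$ is a composite of the $\CC$-morphisms $\Phi$ and $\mu$, hence a $\CC$-morphism. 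Since $ST$ is a memory set of $\sigma\circ\tau$ with associated local defining map $\kappa$, this exhibits $\sigma\circ\tau$ as a $\CC$-cellular automaton.

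I do not anticipate a serious obstacle; the only point requiring care is the bookkeeping that identifies $\pi_s$ and $f_s$ with genuine $\CC$-morphisms rather than mere set maps, i.e. that permuting and relabelling finitely many coordinates of a concrete finite product is realized by a $\CC$-morphism. This follows formally from the universal property together with the defining compatibility between the $\CC$-product and the $\Set$-product in a concrete category satisfying (CFP), exactly as in the proof of Proposition \ref{p:global-CC-equiv-local} where the coordinate-permuting maps $x\mapsto g^{-1}x$ were seen to be $\CC$-morphisms. An alternative, if $\CC$ happens to satisfy (CP), would be to invoke Proposition \ref{p:global-CC-equiv-local} to replace ``$\CC$-cellular automaton'' by ``$\CC$-morphism'' throughout and then just compose $\tau$ and $\sigma$ as $\CC$-morphisms; but the argument above works under the weaker hypothesis (CFP+) assumed here.
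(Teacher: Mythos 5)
Your proof is correct and follows essentially the same route as the paper's: both reuse the memory set $ST$ and the factorization $\kappa = \mu \circ \Phi$ with $\varphi_s = \nu \circ f_s \circ \pi_s$ from Proposition \ref{p:composition}, and then verify that each factor is a $\CC$-morphism. Your extra care in justifying that $\pi_s$, $f_s$, and the product map $\Phi$ are genuine $\CC$-morphisms via the universal property of concrete products only makes explicit what the paper leaves implicit.
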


\begin{proof}
We have already seen (cf. Proposition \ref{p:composition} and Remark \ref{r:GCH-consequence}) that $\sigma \circ \tau$ is a
cellular automaton. 
Thus we are only left to show that $\sigma \circ \tau$
admits a local defining map which is a $\CC$-morphism.
Let $S$ (resp. $T$) be a memory set for $\sigma$ (resp. $\tau$) and let
$\mu \colon B^S \to C$ (resp. $\nu \colon A^T \to B$) denote the corresponding local defining map. Then, as we showed in the proof of Proposition \ref{p:composition}, the set $ST$ is a memory set for $\sigma \circ \tau$ and the map $\kappa \colon A^{ST} \to C$ defined by \eqref{kappa} is the corresponding local defining map.
 Now, since $\tau$ is a $\CC$-cellular automaton, we have that $\nu$ is a $\CC$-morphism.
Moreover, the maps $\pi_s \colon A^{ST} \to A^{sT}$ and  $f_s \colon A^{sT} \to A^T$  
are $\CC$-morphisms for every $s \in S$. 
As $\varphi_s = \nu \circ f_s \circ \pi_s $ (cf. \eqref{e:varphi-s}), 
 it follows that the product map  
 $\Phi = \prod_{s \in S} \varphi_s \colon A^{ST} \to B^S$ (cf. \eqref{e:Phi}) is a $\CC$-morphism.
Since $\sigma$ is a $\CC$-cellular automaton, its local defining map $\mu \colon B^S \to C$ is also a $\CC$-morphism and therefore the map $\kappa = \mu \circ \Phi \colon A^{ST} \to C$ is a $\CC$-morphism as well.
This completes the proof that $\sigma \circ \tau$ is a $\CC$-cellular automaton.
\end{proof}
 
Let $G$ be a group and $\CC$ a concrete category  satisfying (CFP+).
Then it follows from Example \ref{e:identity} and Proposition \ref{p:C-composition} that
there is   a category $\CA(G,\CC)$ 
having the same objects as $\CC$,
in which the identity morphism of an object $A$ is the identity map $\Id_{A^G} \colon A^G \to A^G$, and where the 
morphisms from an object $A$ to an object $B$ are the $\CC$-cellular automata 
$\tau \colon A^G \to B^G$ (with composition of morphisms given by the usual composition of maps). 
The category $\CA(G,\CC)$ is a concrete category when equipped with the functor $U \colon \CA(G,\CC) \to \Set$ given by $U(A) = A^G$ and $U(\tau) = \tau$. 
Observe that the image of
the functor $U$ is a subcategory of the category $\CA(G)$  defined at the end of Subsection
\ref{ss:composition-ca}.

% SECTION 4
\section{Projective sequences of sets}
\label{sec:proj-seq}

 Let us briefly recall some elementary facts about projective sequences of sets and their projective limits.
\par
 A \emph{projective sequence of sets} is a sequence $(X_n)_{n \in \N}$ of sets equipped with maps $f_{nm} \colon X_m \to X_n$, defined for all $n,m \in \N$ with $ m \geq n$,
and satisfying the following conditions:

\begin{enumerate}[(PS-1)]
\item
$f_{n n}$ is the identity map on $X_n$ for all $n \in \N$;
\item
$f_{n k} = f_{n m} \circ f_{m k}$ for all $n,m,k \in \N$ such that $k \geq m \geq n$.
\end{enumerate}
We shall denote such a projective sequence by $(X_n,f_{n m})$ or simply by $(X_n)$.
\par
Observe that the projective sequence $(X_n,f_{n m}$) is entirely determined by the maps
$g_n = f_{n,n+1} \colon X_{n + 1} \to X_n$, $n \in \N$, since
\begin{equation}
\label{e:gives-fnm-gn}
f_{n m} = g_n \circ g_{n + 1} \circ \ldots \circ g_{m - 1} 
\end{equation}
for all $m > n$. 
Conversely, if we are given a sequence of maps $g_n \colon X_{n + 1} \to X_n$, $n \in \N$, then there is a unique projective sequence $(X_n, f_{n m})$ satisfying \eqref{e:gives-fnm-gn}.
\par
 The \emph{projective limit} $X = \varprojlim X_n$ of the projective sequence of sets $(X_n,f_{n m})$ is the subset  $X \subset \prod_{n \in \N} X_n$ consisting of the sequences $x = (x_n)_{n \in \N}$ satisfying 
$x_n = f_{n m}(x_m)$ for all $n,m \in \N$ such that $m \geq n$
(or, equivalently, $x_n = g_n(x_{n + 1})$ for all $n \in \N$, where $g_n = f_{n,n+1}$).
Note that there is a canonical map $\pi_n \colon X \to X_n$ sending $x$ to $x_n$ and that one has
$\pi_n = f_{n m} \circ \pi_m$ for all $m,n \in \N$ with $m \geq n$. 
\par
The fact that the projective limit $X = \varprojlim X_n$ is not empty clearly implies that all the sets $X_n$ are nonempty.
However, it can happen that the projective limit $X = \varprojlim X_n$ is empty even if all the sets $X_n$ are nonempty.
The following statement yields a sufficient condition for the projective limit to be nonempty.

 \begin{proposition}
\label{p:proj-limit-nonempty}
Let $(X_n,f_{n m})$ be a projective sequence of sets and
 let $X = \varprojlim X_n$ denote its projective limit.
Suppose that all maps $f_{n m} \colon X_m \to X_n$,   $m \geq n$ are surjective.
Then all canonical maps $\pi_n \colon X \to X_n$, $m \in \N$, are surjective.
In particular, if in addition $X_{n_0} \not= \varnothing$ for some $n_0 \in \N$, 
then one has $X \not= \varnothing$. 
\end{proposition}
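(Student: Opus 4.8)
The plan is to prove the stronger assertion that each canonical map $\pi_n \colon X \to X_n$ is onto; the ``in particular'' clause then follows immediately, since if $X_{n_0}\neq\varnothing$ we may pick any $a\in X_{n_0}$ and surjectivity of $\pi_{n_0}$ yields a point of $X$. Note first that, by \eqref{e:gives-fnm-gn}, the surjectivity of all the $f_{n m}$ ($m\geq n$) is equivalent to the surjectivity of all the maps $g_n = f_{n,n+1}\colon X_{n+1}\to X_n$, $n\in\N$; I will work with the $g_n$.

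Fix $n\in\N$ and $a\in X_n$. I would construct an element $x=(x_m)_{m\in\N}\in\prod_{m\in\N}X_m$ with $x_n=a$ in two stages. For the indices $m\leq n$ there is nothing to choose: set $x_m:=f_{m n}(a)$, which is what the compatibility relations force anyway. For the indices $m\geq n$ I would build the sequence recursively: put $x_n:=a$, and, assuming $x_m$ has been defined for some $m\geq n$, use the surjectivity of $g_m\colon X_{m+1}\to X_m$ to select some $x_{m+1}\in X_{m+1}$ with $g_m(x_{m+1})=x_m$. This recursion makes infinitely many selections, so I would invoke the axiom of dependent choice (equivalently, the axiom of choice) explicitly at this point.

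It then remains to check that $x=(x_m)_{m\in\N}$ lies in $X=\varprojlim X_n$, i.e. that $x_k=f_{k m}(x_m)$ whenever $m\geq k$. By \eqref{e:gives-fnm-gn} it suffices to verify $x_k=g_k(x_{k+1})$ for every $k\in\N$, and there are three cases. If $k\geq n$, this is precisely how $x_{k+1}$ was chosen. If $k+1\leq n$, then by condition (PS-2) we have $x_k=f_{k n}(a)=(f_{k,k+1}\circ f_{k+1,n})(a)=g_k\bigl(f_{k+1,n}(a)\bigr)=g_k(x_{k+1})$. The only remaining case is $k=n-1$, where $x_{n-1}=f_{n-1,n}(a)=g_{n-1}(a)=g_{n-1}(x_n)$ directly from the definitions of $x_{n-1}$ and $x_n$. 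Hence $x\in X$ and $\pi_n(x)=x_n=a$, so $\pi_n$ is surjective.

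I do not anticipate any real obstacle: the argument is a standard recursive construction. The only points deserving care are the explicit appeal to a choice principle for the infinitely many selections in the ``upward'' recursion, and the bookkeeping required to glue the determined ``downward'' part ($m\leq n$) to the chosen ``upward'' part ($m\geq n$) so that the compatibility relation $x_k=g_k(x_{k+1})$ holds across the junction at $k=n-1$.
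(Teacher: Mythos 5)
Your proof is correct and follows essentially the same route as the paper's: fix $a\in X_n$, lift it upward step by step using the surjectivity of the maps $f_{k,k+1}$, and push it downward by the forced assignment $x_m=f_{mn}(a)$. The only additions are the explicit appeal to dependent choice and the case-by-case verification of compatibility at the junction, both of which the paper leaves implicit.
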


\begin{proof}
Let $n \in \N$ and $x_n\in X_n$. As the maps $f_{k,k + 1}$, $k \geq n$, are surjective, we can construct by induction a sequence 
$(x_k)_{k \geq n}$ such that $x_k = f_{k,k + 1}(x_{k + 1})$ for all $k \geq n$.
Let us set $x_k = f_{k n}(x_n)$ for $k< n$.
Then the sequence $x = (x_k)_{k \in \N}$  is in $X$ and satisfies $x_n = \pi_n(x)$. 
This shows that $\pi_n$ is surjective.
\end{proof}

\begin{remarks}
1) For the maps $f_{n m}$, $m \geq n$, to be surjective, it suffices that all the maps 
$f_{n,n+1}$ are  surjective.
\par
2)  When the maps $f_{n m}$ are all surjective, the following conditions are equivalent:
(1) there exists $n_0 \in \N$ such that $X_{n_0} \not= \varnothing$, and (2)
one has $X_n \not= \varnothing$ for all $n\in \N$. 
  \end{remarks}

Let $(X_n,f_{n m})$ be a projective sequence of sets.
 Property (PS-2) implies that, for each $n \in \N$, 
 the sequence of subsets $f_{nm}(X_m) \subset X_n$, $m \geq n$, is non-increasing. Let us set, for each $n \in \N$, 
$$
X_n' = \bigcap_{m \geq n} f_{nm}(X_m) \subset X_n.
$$
The set $X_n'$ is called the set of \emph{universal elements} of $X_n$ 
(cf. \cite{grothendieck-ega-3}).
Observe that $f_{n m}(X_m') \subset X_n'$ for all $m \geq n$.
Thus, the map $f_{n m}$ induces by restriction a map $f_{n m}' \colon X_m' \to X_n'$ 
for all $m  \geq n$. 
Clearly $(X_n',f_{n m}')$ is a projective sequence.
This projective sequence is called the
\emph{universal projective sequence} associated with the projective sequence $(X_n,f_{n m})$.

\begin{proposition}
\label{p:same-proj-limit}
Let $(X_n,f_{n m})$ be a projective sequence of sets and let $(X_n',f_{n m}')$ be the associated universal projective sequence. Then one has
\begin{equation}
\label{e:proj-univ}
\varprojlim X_n = \varprojlim X_n'.
\end{equation}
\end{proposition}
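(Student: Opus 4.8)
The plan is to show the two projective limits coincide by proving both inclusions, the nontrivial one being $\varprojlim X_n \subset \varprojlim X_n'$, since the reverse inclusion is immediate from the fact that $X_n' \subset X_n$ and the transition maps $f_{nm}'$ are restrictions of the $f_{nm}$ (so any compatible sequence in the $X_n'$ is in particular a compatible sequence in the $X_n$, living inside $\prod X_n'$). The key observation for the hard direction is that if $x = (x_n)_{n \in \N}$ is an element of $\varprojlim X_n$, then for every $n$ and every $m \geq n$ one has $x_n = f_{nm}(x_m) \in f_{nm}(X_m)$; intersecting over all $m \geq n$ gives $x_n \in X_n'$. Hence $x$ already lies in $\prod_{n} X_n'$, and since its coordinates satisfy $x_n = f_{nm}(x_m) = f_{nm}'(x_m)$ (the last equality because $f_{nm}'$ is just $f_{nm}$ restricted to $X_m'$, and both $x_n, x_m$ lie in the respective universal-element sets), we conclude $x \in \varprojlim X_n'$.

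Concretely, I would first record that $\varprojlim X_n' \subset \varprojlim X_n$: an element $x = (x_n)$ of $\varprojlim X_n'$ satisfies $x_n \in X_n' \subset X_n$ and $x_n = f_{nm}'(x_m) = f_{nm}(x_m)$ for all $m \geq n$, so $x \in \varprojlim X_n$. Then for the converse, take $x = (x_n) \in \varprojlim X_n$; fix $n \in \N$; for each $m \geq n$ we have $x_n = f_{nm}(x_m)$, so $x_n \in f_{nm}(X_m)$; therefore $x_n \in \bigcap_{m \geq n} f_{nm}(X_m) = X_n'$. This holds for every $n$, so $x \in \prod_n X_n'$, and the compatibility relations $x_n = f_{nm}(x_m) = f_{nm}'(x_m)$ (valid since $x_m \in X_m'$ and $f_{nm}'$ agrees with $f_{nm}$ there) show $x \in \varprojlim X_n'$. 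Combining the two inclusions gives \eqref{e:proj-univ}.

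I do not anticipate a genuine obstacle here — the statement is essentially a bookkeeping exercise about the definition of the universal projective sequence. The one point that deserves a careful word is verifying that $x_n = f_{nm}(x_m)$ really does place $x_n$ in $f_{nm}(X_m)$ for \emph{all} $m \geq n$ simultaneously, which uses nothing beyond the defining relation of $\varprojlim X_n$; once that is in hand, membership in the intersection $X_n'$ is automatic. It is also worth noting explicitly that this proposition makes no surjectivity hypothesis on the $f_{nm}$ (unlike Proposition \ref{p:proj-limit-nonempty}), so no appeal to that earlier result is needed; the identity \eqref{e:proj-univ} holds unconditionally, and it is precisely what lets one replace an arbitrary projective sequence by one with surjective transition maps (on the universal-element subsequences, the induced maps $f_{nm}'$ are surjective by a short separate argument, typically given right after this proposition) without changing the limit.
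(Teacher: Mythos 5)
Your proof is correct and follows essentially the same route as the paper: the easy inclusion $\varprojlim X_n' \subset \varprojlim X_n$ from $X_n' \subset X_n$ and the fact that $f_{nm}'$ restricts $f_{nm}$, and the converse by observing that $x_n = f_{nm}(x_m)$ for all $m \geq n$ forces $x_n \in \bigcap_{m \geq n} f_{nm}(X_m) = X_n'$. The extra remarks on compatibility and the absence of any surjectivity hypothesis are accurate but not needed beyond what the paper already records.
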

\begin{proof}
Let us set $X = \varprojlim X_n$ and $X' = \varprojlim X_n'$.
Since $X_n' \subset X_n$ and $f_{nm}'$ is the restriction of $f_{nm}$ to $X_n'$, for all
$n,m \in \N$ with $m \geq n$, we clearly have $X' \subset X$.
To show the converse inclusion, let $x = (x_n)_{n \in \N} \in X$. 
We have $x_n = f_{nm}(x_m)$ for all $n,m \in \N$ such that
$m \geq n$, so that $x_n \in \bigcap_{m \geq n} f_{nm}(X_m) = X_n'$. 
Since $f_{nm}'(x_n) = f_{nm}(x_n)$, we then deduce that $X \subset X'$.
This shows \eqref{e:proj-univ}.
\end{proof}

\begin{corollary}
\label{cor:IP}
Let $(X_n,f_{n m})$ be a projective sequence of sets.
Suppose that the following conditions are satisfied:
\begin{enumerate}[\rm ({I}P-1)]
\item
there exists $n_0 \in \N$ such that $\bigcap_{k \geq n_0} f_{n_0 k}(X_k) \not= \varnothing$;
\item
for all $n,m  \in \N$ with $m \geq n$ and  all $x_n' \in \bigcap_{i \geq n}f_{n i}(X_i)$, one has 
$$
\bigcap_{j \geq m} f_{n m}^{-1}(x_n') \cap f_{m j}(X_j) \not= \varnothing.
$$
\end{enumerate}
Then  one has
$\varprojlim X_n \not= \varnothing$. 
\end{corollary}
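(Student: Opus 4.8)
The plan is to reduce the conclusion to the surjective case already handled in Proposition \ref{p:proj-limit-nonempty}, using Proposition \ref{p:same-proj-limit} as the bridge. The key observation is that the hypotheses (IP-1) and (IP-2) are designed precisely to guarantee that the \emph{universal} projective sequence $(X_n',f_{n m}')$ has all transition maps surjective and has a nonempty term; once that is established, Proposition \ref{p:proj-limit-nonempty} gives $\varprojlim X_n' \neq \varnothing$, and Proposition \ref{p:same-proj-limit} identifies this with $\varprojlim X_n$.

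First I would recall that $X_n' = \bigcap_{i \geq n} f_{n i}(X_i)$ and that the map $f_{n m}' \colon X_m' \to X_n'$ is the restriction of $f_{n m}$. Condition (IP-1) says exactly that $X_{n_0}' \neq \varnothing$ for some $n_0$. The main work is to show that each $f_{n m}' \colon X_m' \to X_n'$ is surjective; by the first Remark following Proposition \ref{p:proj-limit-nonempty} it suffices to treat $m = n+1$, but in fact the general statement is no harder and is what (IP-2) is phrased for. So fix $n \leq m$ and $x_n' \in X_n'$. I must produce $x_m' \in X_m'$ with $f_{n m}'(x_m') = x_n'$, i.e. an element of $X_m'$ lying in $f_{n m}^{-1}(x_n')$. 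Now $X_m' = \bigcap_{j \geq m} f_{m j}(X_j)$, so what I need is a point in $\bigcap_{j \geq m}\bigl(f_{n m}^{-1}(x_n') \cap f_{m j}(X_j)\bigr)$. This is a nested intersection of the sets $Y_j := f_{n m}^{-1}(x_n') \cap f_{m j}(X_j) \subseteq X_m$; since $f_{m j}(X_j)$ is non-increasing in $j$ (by (PS-2)), the $Y_j$ are non-increasing as well. Hypothesis (IP-2) asserts this decreasing intersection is nonempty, which is exactly the surjectivity of $f_{n m}'$ onto $x_n'$.

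Having shown all $f_{n m}'$ surjective and $X_{n_0}' \neq \varnothing$, I apply the second Remark after Proposition \ref{p:proj-limit-nonempty} to conclude $X_n' \neq \varnothing$ for all $n$, then Proposition \ref{p:proj-limit-nonempty} (applied to the projective sequence $(X_n',f_{n m}')$) to get $\varprojlim X_n' \neq \varnothing$. Finally, Proposition \ref{p:same-proj-limit} gives $\varprojlim X_n = \varprojlim X_n' \neq \varnothing$, as desired.

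The only genuine subtlety — and the step I expect to be the main obstacle — is checking carefully that (IP-2) literally says what surjectivity of $f_{n m}'$ requires, namely unwinding that $x_m' \in X_m'$ with $f_{n m}(x_m') = x_n'$ is the same as $x_m' \in \bigcap_{j \geq m} f_{n m}^{-1}(x_n') \cap f_{m j}(X_j)$, and that $x_n'$ ranging over $X_n' = \bigcap_{i \geq n} f_{n i}(X_i)$ matches the quantifier "$x_n' \in \bigcap_{i \geq n} f_{n i}(X_i)$" in (IP-2). Everything else is a routine assembly of the two cited propositions; no new ideas are needed beyond recognizing that passing to universal elements converts the awkward-looking intersection hypotheses into plain surjectivity.
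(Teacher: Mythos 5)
Your proof is correct and follows exactly the paper's own argument: pass to the universal projective sequence, note that (IP-1) gives $X_{n_0}' \neq \varnothing$ and (IP-2) is precisely the surjectivity of the maps $f_{nm}'$, then conclude via Propositions \ref{p:same-proj-limit} and \ref{p:proj-limit-nonempty}. The "subtlety" you flag — identifying $\bigcap_{j \geq m} f_{n m}^{-1}(x_n') \cap f_{m j}(X_j)$ with $f_{n m}^{-1}(x_n') \cap X_m'$ — is handled the same way in the paper and poses no difficulty.
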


\begin{proof}
Consider the universal projective sequence  $(X_n',f_{n m}')$ associated with the projective sequence $(X_n,f_{n m})$.
Observe that condition (IP-1) says that $X_{n_0}' \not= \varnothing$.
On the other hand, condition (IP-2) says that for all $n,m \in \N$ with $m \geq n$, 
one has $f_{n m}^{-1}(x_n') \cap X_m' \neq \varnothing$ for all $x_n' \in X_n'$, i.e., that the map $f_{n m}'$ is surjective.
Thus, by applying Proposition \ref{p:same-proj-limit} and Proposition \ref{p:proj-limit-nonempty},
we get $\varprojlim X_n = \varprojlim X_n' \not= \varnothing$.  
\end{proof}

\begin{remark}
\label{rem:non-empty-decreasing}
Let $(X_n,f_{n m})$ be an arbitrary sequence of sets.
We claim that, given $m \geq n$ and 
$x_n' \in \bigcap_{i \geq n}f_{n i}(X_i)$,  one has
$f_{n m}^{-1}(x_n') \cap f_{m j}(X_j) \not= \varnothing$ for every $j \geq m$.
Indeed, since $x_n' \in f_{n j}(X_j)$, we can find $y_j \in X_j$ such that $x_n' = f_{n j}(y_j)$. 
Setting $z_m = f_{m j}(y_j)$, we then have
$f_{n m}(z_m) = f_{n m} \circ f_{m j}(y_j) = x_n'$, so that
$z_m \in f_{n m}^{-1}(x_n') \cap f_{m j}(X_j) $.
This proves our claim. It follows that the sets
$f_{n m}^{-1}(x_n') \cap f_{m j}(X_j)$ form a non-increasing sequence of nonempty subsets of $X_m'$.
Condition (IP-2) says that the intersections of this sequence is not empty for all $m \geq n$ and $x_n' \in X_n'$. 
\end{remark}

% SECTION 5
\section{Algebraic and subalgebraic subsets}

\subsection{Algebraic subsets}

 \begin{definition}
Let $\CC$ be a concrete category.
Given a $\CC$-object $A$, we say that a subset $X \subset A$ is  $\CC$-\emph{algebraic} 
(or simply \emph{algebraic} if there is no ambiguity on the category $\CC$)
if $X$ is the inverse image of a point by some $\CC$-morphism, i.e.,
if there exist a $\CC$-object $B$, a point $b \in B$, and a $\CC$-morphism $f \colon A \to B$ such that $X = f^{-1}(b)$.
\end{definition}

\begin{remark}
If $\CC$ is a concrete category admitting a terminal object which is reduced to a single element (this is for example the case when $\CC$ is a concrete category satisfying (CFP+)),
then every   $\CC$-object $A$ is  a $\CC$-algebraic subset of itself.
Indeed, we then have $A = f^{-1}(t)$, where $f \colon A \to T$ is the unique $\CC$-morphism from $A$ to $T$ and $t$ is the unique element of $T$.
 \end{remark}

\begin{remark}
Suppose that $\CC$ is a concrete category satisfying (CFP) and that $A$ is a $\CC$-object. Then the set of $\CC$-algebraic subsets of $A$ is closed under finite intersections. 
Indeed, if $(X_i)_{i \in I}$ is a finite family of $\CC$-algebraic subsets of a $\CC$-object $A$, we can find $\CC$-morphisms
$f_i \colon A \to B_i$  and points $b_i \in B_i$  such that $X_i = f_i^{-1}(b_i)$. 
Then $\bigcap_{i \in I} X_i = f^{-1}(b)$, where $f = \prod_{i \in I} f_i \colon A \to B$, 
$B = \prod_{i \in I} B_i$ and $b = (b_i)_{i \in I}$. 
\end{remark}

\begin{examples}
1) In the category $\Set$ or in its full subcategory $\Setf$,  the algebraic subsets of an object $A$ consist of all the subsets of $A$.
\par
2) In the category $\Grp$, the algebraic subsets of an object $G$ consist of the empty set $\varnothing$ and all  the left-cosets (or, equivalently, all the right-cosets) of the normal subgroups of $G$, i.e., the subsets of the form $gN$, where $g \in G$ and $N$ is a
normal subgroup of $G$.
\par
3) In the category $\Rng$, the algebraic subsets of an object $R$ consist of $\varnothing$ and all the translates of the two-sided ideals of $R$, i.e., 
the subsets of the form $r + I$, where $r \in R$ and $I$ is a two-sided ideal of $R$.
\par
4) In the category $\Fld$,  the algebraic subsets of an  object $K$ 
are $\varnothing$   and all the singletons $\{k\}$, $k \in K$.
\par
5) In the category $R$-$\Mod$, the algebraic subsets of an object $M$ are $\varnothing$ and all the translates of the submodules of $M$, i.e., the subsets of the form $m + N$, where $m \in M$ and $N$ is a submodule of $M$.
\par
6) In the category $\Top$, every subset of an object $A$ is algebraic.
Indeed, if $A$ is a topological space and $X$ is a nonempty subset of  $A$,
then $X = f^{-1}(b_0)$, where $B$ is the quotient space of $A$ obtained by identifying $X$ to a single point $b_0$ and $f \colon A \to B$ is the quotient map.
\par 
7)  In the full subcategory of $\Top$ whose objects consist of all the normal Hausdorff spaces,
the algebraic subsets of an object $A$ are precisely the closed subsets of $A$.
\par
8) In the category $K$-$\Aal$ of affine algebraic sets over a field $K$, the algebraic subsets of an object $A$ are precisely the subsets of $A$ that are closed in the Zariski topology. If $A \subset K^n$, these subsets are the algebraic subsets of $K^n$ (in the usual sense of algebraic geometry) that are contained in $A$.
\end{examples}

\subsection{Subalgebraic subsets}
\begin{definition}
Let $\CC$ be a concrete category.
Given a $\CC$-object $B$, we say that a subset $Y \subset B$ is $\CC$-\emph{subalgebraic} (or simply
\emph{subalgebraic} if there is no ambiguity on the category $\CC$)
if $Y$ is the image of some $\CC$-algebraic subset by some $\CC$-morphism, i.e., 
if there exist a $\CC$-object $A$, a $\CC$-algebraic subset $X \subset A$, and a $\CC$-morphism $f \colon A \to B$ such that $Y = f(X)$. 
\end{definition} 

Note that, if $\CC$ is a concrete category and $A$ is a $\CC$-object,
then every $\CC$-algebraic subset $X \subset A$ is also $\CC$-subalgebraic since 
$X = \Id_A(X)$. 
Note also that if $g \colon B \to C$ is a $\CC$-morphism and $Y$ is a $\CC$-subalgebraic subset of $B$, then $g(Y)$ is a $\CC$-subalgebraic subset of $C$.

\begin{examples}
1) In the category  $\Set$ or in the category  $\Setf$, the subalgebraic subsets of an object $A$ coincide with its algebraic subsets,  that is, they consist of all the subsets of $A$.
\par
2) In the category $ \Grp$, every subalgebraic subset of an object $G$ is either empty or   of the form $gH$, where $g \in G$ and $H$ is a (not necessarily normal) subgroup of $G$.
Observe that every subgroup $H \subset G$ is  subalgebraic  since it is the image of the inclusion  morphism  $\iota \colon H \to G$.
This shows in particular that there exist subalgebraic subsets that are not algebraic.
On the other hand, a group may contain subgroup cosets  which are not subalgebraic.
Consider for example the symmetric group $G = S_3$.
Then, if we take $g = (12)$ and $H = \langle (13) \rangle = \{1_G,(13)\}$, the coset $gH = \{(12),  (123) \}$ is not a subalgebraic subset of $G$.
Otherwise, there would exist a group  $G'$,  a  normal subgroup  $H' \subset G'$, 
an element $g' \in G'$,  and a group homomorphism $f \colon G' \to G$   such that $gH = f(g'H') = f(g') f(H')$. 
If  $f$ was surjective, then $f(H')$ would be normal in $G$ and thus would
have $1$, $3$ or $6$ elements, which would contradict the fact  that $H$ has order $2$.
Therefore,  the subgroup $f(G')$ must  have either
$1$, $2$ or $3$ elements. 
But this is also impossible since the coset $gH = f(g')f(H')$ has two elements and does not contain $1_G$. 
 \par
 3) In the category $\Rng$, there are subalgebraic subsets that are not algebraic.
 For example, in the polynomial ring $\Z[t]$, the subring $\Z$ is subalgebraic but not algebraic.
\par
4) In the category $\Fld$, the subalgebraic subsets of an object $K$ are
its algebraic subsets, i.e.,   $\varnothing$   and all the singletons $\{k\}$, $k \in K$.
\par
5)  In the category $R$-$\Mod$, the subalgebraic subsets of an object $M$ coincide with the algebraic subsets of $M$. Thus the subalgebraic subsets of $M$
consist of $\varnothing$ and the translates of the submodules of $M$.
 \par
 6) In the category $\Top$, the subalgebraic subsets of an object $A$ coincide with its algebraic subsets, i.e., they consist of all the subsets of $A$.
 \par
7) In the full subcategory of $\Top$ whose objects are Hausdorff topological spaces,   the open interval $]0,1[ \subset \R$ is subalgebraic but not algebraic.
\par
8)  In the category  $\CHT$ of compact Hausdorff topological spaces,
the subalgebraic subsets of an object $A$ coincide with its algebraic subsets,   i.e., are  the   closed subsets of $A$.
 \par
9) In the category $K$-$\Aal$ of affine algebraic sets over an algebraically closed field $K$,
it follows from Chevalley's theorem (see e.g. \cite[AG Section 10.2]{borel-algebraic} or 
\cite[Theorem 10.2]{milne})
that every subalgebraic subsets of an object $A$ is
 \emph{constructible}, 
that is, a finite union of subsets of the form $U \cap V$, where $U \subset A$   is  open  and 
$V \subset A$  is closed  for the Zariski topology.  
\end{examples}

\subsection{The subalgebraic intersection property}
The following definition is due to Gromov \cite[Subsection 4.C']{gromov-esav}.

\begin{definition}
 Let $\CC$ be a concrete category.
We say that $\CC$ satisfies the \emph{subalgebraic intersection
property}, briefly (SAIP), if for every $\CC$-object $A$,  every $\CC$-algebraic subset $X \subset A$, and
every non-increasing sequence $(Y_n)_{n \in \N}$ of $\CC$-subalgebraic subsets of $A$ with $X \cap Y_n \not= \varnothing$ for all $n \in \N$, 
 one has  $\bigcap_{n\in\N} X \cap Y_n \not= \varnothing$.
\end{definition}

Let us introduce one more definition.

\begin{definition}
We say that a concrete category $\CC$ is \emph{Artinian}
if the subalgebraic subsets of any $\CC$-object satisfy the descending chain condition, i.e.,  if, given any $\CC$-object $A$
and any non-increasing sequence $(Y_n)_{n \in \N}$ of $\CC$-subalgebraic subsets of $A$,   there exists $n_0 \in \N$ such that $Y_n = Y_{n + 1}$ for all $n \geq n_0$.
\end{definition}

\begin{proposition}
\label{p:art-saip}
Every Artinian concrete category satisfies (SAIP).
\end{proposition}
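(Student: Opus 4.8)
The plan is to use the characterization of (SAIP) directly and exploit the descending chain condition that defines an Artinian concrete category. So suppose $\CC$ is Artinian, let $A$ be a $\CC$-object, let $X \subset A$ be a $\CC$-algebraic subset, and let $(Y_n)_{n \in \N}$ be a non-increasing sequence of $\CC$-subalgebraic subsets of $A$ with $X \cap Y_n \neq \varnothing$ for all $n \in \N$. We must show $\bigcap_{n \in \N} (X \cap Y_n) \neq \varnothing$.

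The first step is to observe that the descending chain condition applies to the sequence $(Y_n)_{n \in \N}$ itself, since each $Y_n$ is $\CC$-subalgebraic. Hence there exists $n_0 \in \N$ such that $Y_n = Y_{n_0}$ for all $n \geq n_0$. The second step is then immediate: since the sequence is non-increasing, $\bigcap_{n \in \N} Y_n = Y_{n_0}$, so $\bigcap_{n \in \N} (X \cap Y_n) = X \cap Y_{n_0}$, and by hypothesis $X \cap Y_{n_0} \neq \varnothing$. This establishes (SAIP).

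I do not anticipate any real obstacle here; the proposition is essentially a restatement of the definitions once one notices that the Artinian condition is exactly the stabilization property needed to collapse the infinite intersection to a finite one. The only point requiring a moment's care is to make sure the hypothesis "$X \cap Y_n \neq \varnothing$ for all $n$" is invoked at the stabilized index $n_0$ rather than attempting to intersect infinitely many genuinely distinct sets, but this is handled transparently by the monotonicity of the sequence. No use of (CFP+), products, or the subalgebraic intersection machinery of Section~5 is needed beyond the bare definitions.

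\begin{proof}
Let $A$ be a $\CC$-object, let $X \subset A$ be a $\CC$-algebraic subset, and let $(Y_n)_{n \in \N}$ be a non-increasing sequence of $\CC$-subalgebraic subsets of $A$ such that $X \cap Y_n \not= \varnothing$ for all $n \in \N$. Since $\CC$ is Artinian and each $Y_n$ is $\CC$-subalgebraic, the descending chain condition yields an integer $n_0 \in \N$ such that $Y_n = Y_{n_0}$ for all $n \geq n_0$. As the sequence $(Y_n)_{n \in \N}$ is non-increasing, it follows that $\bigcap_{n \in \N} Y_n = Y_{n_0}$, and therefore
$$
\bigcap_{n \in \N} (X \cap Y_n) = X \cap \bigcap_{n \in \N} Y_n = X \cap Y_{n_0} \not= \varnothing,
$$
where the last inequality holds by hypothesis. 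This shows that $\CC$ satisfies (SAIP).
\end{proof}
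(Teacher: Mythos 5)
Your proof is correct and follows exactly the same route as the paper's: apply the descending chain condition to the non-increasing sequence $(Y_n)_{n \in \N}$ to obtain a stabilization index $n_0$, then observe that the infinite intersection collapses to $X \cap Y_{n_0}$, which is nonempty by hypothesis. There is nothing to add.
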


\begin{proof}
Let $\CC$ be an Artinian concrete category. 
Let $A$ be a $\CC$-object, $X$ an algebraic subset of $A$, and $(Y_n)_{n \in \N}$ a non-increasing sequence of subalgebraic subsets of $A$ with $X \cap Y_n \not= \varnothing$ 
for all $n \in \N$.
As $\CC$ is Artinian, we can find $n_0 \in \N$ such that
$Y_m = Y_{n_0}$ for all $m \geq n_0$. We then have
$$
\bigcap_{n \in \N} X \cap Y_n = X \cap Y_{n_0} \not= \varnothing.
$$
This shows that $\CC$ satisfies (SAIP).
\end{proof}
 
Observe that if a concrete category $\CC$ satisfies (SAIP) (resp. is Artinian), then every subcategory of $\CC$ satisfies (SAIP) (resp. is Artinian).

\begin{examples}
\label{ex:SAIP}
1) The category $\Set$ does not satisfy (SAIP). Indeed, if we take $A = X = \N$ and 
$Y_n = \{m \in \N: m \geq n\}$, $n \in \N$, we have $X \cap Y_n \neq \varnothing$ for all $n \in \N$ but $\bigcap_{n\in\N} X \cap Y_n = \varnothing$.
\par
2) The category $\Setf$ is Artinian and hence  satisfies (SAIP).
Indeed, every non-increasing sequence of subsets of a finite  set  eventually stabilizes. 
\par
3) Let $R$ be a nonzero ring and let $\CC = R$-$\Mod$. 
Consider the $R$-module $M = \bigoplus_{i \in \N} R$ and, for
every $n \in \N$, denote by $\pi_n \colon M \to R^{n+1}$ the projection map defined by $\pi_n(m) = (m_0,m_1,\ldots, m_n)$ for all $m = (m_i)_{i \in \N} \in M$. Let $y_n := (1,1,\ldots,1) \in R^{n+1}$ and set $X_n := \pi_n^{-1}(y_n)=\{m = (m_i)_{i \in \N} \in M: m_0 = m_1 = \cdots = m_n = 1\}$.
Then $X_n$ is a nonempty $\CC$-algebraic subset of $M$ and we have $X_0 \subset X_1 \supset X_2 \supset \dots$ but
$\bigcap_{n \in \N} X_n = \varnothing$. This shows that $R$-$\Mod$ does not satisfy (SAIP)
unless $R$ is a zero ring.
\par
4) Since $\Z$-$\Mod$ is a subcategory of $\Grp$, namely the full subcategory of $\Grp$ whose
objects are Abelian groups, we deduce that $\Grp$ does not satisfy (SAIP) either.
\par
5) If $K$ is a field, then the category $K$-$\Vect = K$-$\Mod$
does not satisfy (SAIP) since
any field is a nonzero ring.
\par 
6)  Given an integer $a \geq 2$, the subsets $X_n \subset \Z$ defined by
$$X_n = 1 + a + a^2 + \cdots + a^n + a^{n+1}\Z,$$ 
$n \in \N$, are nonempty $\CC$-algebraic subsets of $\Z$ for $\CC = \Rng$ and  $\CC = \Z$-$\Modfg$. 
As $X_0 \supset X_1 \supset X_2 \supset \dots$ and $\bigcap_{n \in \N} X_n = \varnothing$ 
(cf. the remark following the proof of Proposition~2.2 in \cite{artinian}), this shows that the categories
$\Rng$ and $\Z$-$\Modfg$ do not satisfy (SAIP). 
\par
7) Let $R$ be a ring. Recall that a left module $M$ is called \emph{Artinian} if the submodules of $M$ satisfy the descending chain condition (see for instance \cite[Chapter~8]{hungerford}). It is clear that, in an Artinian module, the translates of the submodules also satisfy the descending chain condition (cf. \cite[Proposition~2.2]{artinian}).
It follows that the full subcategory $R$-$\ModArt$ of $R$-$\Mod$, whose objects consist of all  the Artinian left $R$-modules, is Artinian and hence satisfies (SAIP).
Note that $R$-$\ModArt$ satisfies (CFP+) since the direct sum of two Artinian modules is itself Artinian. 
If the ring $R$ is \emph{left-Artinian} (i.e., Artinian as a left module over itself), then every finitely generated left module over $R$ is Artinian (Theorem 1.8 in  \cite[Chapter~8]{hungerford}), so that the category $R$-$\Modfg$ is Artinian and hence satisfies (SAIP) (cf. \cite{artinian}).
 \par
8) If $K$ is a field, then the category $K$-$\Vectfd = K$-$\ModArt$ satisfies (SAIP).
\par
9) The category $\Fld$ clearly satisfies (SAIP).
\par 
10) The category $\Top$ clearly does not satisfy (SAIP).
In fact, even the full subcategory of $\Top$ whose objects are Hausdorff topological spaces
does not satisfy (SAIP) since, in this subcategory,  the open intervals $(0,1/n) \subset \R$ form a non-increasing sequence of subalgebraic subsets of $\R$ with empty intersection.
\par
11)  The  category $\CHT$ of  compact Hausdorff topological spaces (and therefore its full subcategory $\Man$) satisfies (SAIP). 
This follows from the fact that, in a compact space, any family of closed subsets with the finite intersection property has a nonempty intersection. 
Observe that neither $\CHT$ nor $\Man$ are Artinian
since the arcs $X_n := \{\mathrm{e}^{\mathrm{i}\theta} : 0 \leq \theta \leq 1/(n + 1) \}$, $n \in \N$, form a non-increasing sequence of closed subsets of the unit circle
$\sph^1 := \{z \in \C : \vert z \vert = 1 \}$ which does not stabilize.
\par
12) Given a field $K$, the category $K$-$\Aal$ is Artinian if and only if  $K$ is finite.
To see this, first observe that if $K$ if finite then all objects in $K$-$\Aal$ are finite and hence 
$K$-$\Aal$ is Artinian. Then suppose that $K$ is infinite and choose a sequence $(a_n)_{n \in \N}$ of distinct elements in $K$.
Now observe that  $Y_n = K \setminus \{a_0,a_1,\dots,a_n\}$ is a nonempty 
$K$-$\Aal$-subalgebraic subset of $K$ since it is the projection on the $x$-axis of the affine algebraic curve $X_n \subset K^2$ with  equation
$(x - a_0)(x- a_1) \dots (x - a_n)y = 1$.   
As the sequence  $Y_n$ is non-increasing and does not stabilize, this shows that $K$-$\Aal$ is not Artinian.
\par
13) If $K$ is an infinite countable field, then the category $K$-$\Aal$ does not satisfy (SAIP).
Indeed, suppose that $K = \{a_n : n \in \N\}$.
Then the subsets $Y_n = K \setminus \{a_0,a_1,\dots,a_n\}$ form a non-increasing sequence of nonempty subsets of $K$ with empty intersection.
As each $Y_n$ is subalgebraic in $K$ (see the preceding example), this shows that $K$-$\Aal$ does not satisfy (SAIP).
\par
14) If $K$ is an uncountable algebraically closed field, then the category $K$-$\Aal$ of affine algebraic sets over  $K$ satisfies (SAIP) 
(see \cite[Subsection 4.C'']{gromov-esav} and \cite[Proposition 4.4]{algebraic}).
\par
15) The category $\CC = \R$-$\Aal$ of real affine algebraic sets does not satisfy (SAIP).
Indeed, the subsets $X_n = [n,+\infty)$, $n \in \N$, are nonempty 
$\CC$-subalgebraic subsets of $\R$ since $X_n = P_n(\R)$ for $P_n(t) = t^2 + n$.
On the other hand, we have $X_0 \supset X_1 \supset X_2 \supset \cdots$ but $\bigcap_{n \in \N} X_n = \varnothing$.
\end{examples}

\subsection{Projective sequences of algebraic sets}

Let $\CC$ be a concrete category satisfying condition (CFP+).
We say that a projective sequence $(X_n,f_{n m})$ of sets is a \emph{projective sequence of $\CC$-algebraic sets} provided there is a projective sequence $(A_n,F_{n m})$, consisting of $\CC$-objects $A_n$ and $\CC$-morphisms $F_{n m} \colon A_m \to A_n$ for all $n,m \in \N$ such that $m \geq n$, satisfying the following conditions:

\begin{enumerate}[(PSA-1)]
\item
$X_n$ is a $\CC$-algebraic subset of $A_n$ for every $n \in \N$;
\item
$F_{n m}(X_m) \subset X_n$ and $f_{n m}$ is the restriction of $F_{n m}$ to $X_m$ for all $m,n \in \N$ such that $m \geq n$.
\end{enumerate} 

Note that $f_{n m}(X_m) = F_{n m}(X_m)$ in (PSA-2) above is a $\CC$-subalgebraic subset
of $A_n$ for all $n,m \in \N$ such that $m \geq n$.

The following result constitutes an essential ingredient in the proof of Theorem \ref{t:closed-image}.

\begin{theorem}
\label{t:proj-limit-sub} 
Let $\CC$ be a concrete category satisfying  (SAIP).
Suppose that  $(X_n,f_{n m})$ is a projective sequence of nonempty $\CC$-algebraic sets.
Then one has $\varprojlim X_n \not= \varnothing$. 
\end{theorem}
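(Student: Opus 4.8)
The plan is to deduce the theorem from Corollary~\ref{cor:IP} applied to the projective sequence $(X_n,f_{n m})$, using (SAIP) to supply the nonemptiness of the intersections that occur in conditions (IP-1) and (IP-2). Throughout, recall that the very notion of a projective sequence of $\CC$-algebraic sets presupposes that $\CC$ satisfies (CFP+), hence (CFP); in particular the class of $\CC$-algebraic subsets of a fixed $\CC$-object is stable under finite intersections, and the class of $\CC$-subalgebraic subsets is stable under direct images by $\CC$-morphisms. Fix a witnessing projective sequence $(A_n,F_{n m})$ of $\CC$-objects and $\CC$-morphisms satisfying (PSA-1) and (PSA-2), so that each $X_n \subset A_n$ is $\CC$-algebraic and $f_{n m}$ is the restriction of $F_{n m}$ to $X_m$.

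First I would verify (IP-1) with $n_0 = 0$. For each $k \in \N$ the set $Y_k := f_{0 k}(X_k) = F_{0 k}(X_k)$ is the image of the $\CC$-algebraic (hence $\CC$-subalgebraic) subset $X_k$ of $A_k$ under the $\CC$-morphism $F_{0 k}$, so $Y_k$ is a $\CC$-subalgebraic subset of $A_0$; by (PS-2) together with (PSA-2) the sequence $(Y_k)_{k \in \N}$ is non-increasing, and each $Y_k$ is nonempty (since $X_k \neq \varnothing$) and contained in $X_0$ (since $F_{0 k}(X_k) \subset X_0$). Applying (SAIP) to the $\CC$-object $A_0$, the $\CC$-algebraic subset $X_0$, and the sequence $(Y_k)$ yields $\bigcap_{k \geq 0} f_{0 k}(X_k) = \bigcap_{k \in \N} X_0 \cap Y_k \neq \varnothing$, which is exactly (IP-1).

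Next I would verify (IP-2). Fix $n,m \in \N$ with $m \geq n$ and $x_n' \in \bigcap_{i \geq n} f_{n i}(X_i)$, and set $Z := f_{n m}^{-1}(x_n') = F_{n m}^{-1}(x_n') \cap X_m$. This is the intersection of two $\CC$-algebraic subsets of $A_m$ (the preimage of the point $x_n'$ under the $\CC$-morphism $F_{n m}$, and $X_m$ itself), hence is $\CC$-algebraic in $A_m$ by (CFP). For $j \geq m$ put $W_j := f_{m j}(X_j) = F_{m j}(X_j)$; as before each $W_j$ is $\CC$-subalgebraic in $A_m$, the sequence $(W_j)_{j \geq m}$ is non-increasing, and by Remark~\ref{rem:non-empty-decreasing} one has $Z \cap W_j \neq \varnothing$ for every $j \geq m$. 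Applying (SAIP) to $A_m$, $Z$, and $(W_j)_{j \geq m}$ (reindexed by $\N$) gives $\bigcap_{j \geq m} f_{n m}^{-1}(x_n') \cap f_{m j}(X_j) = \bigcap_{j \geq m} Z \cap W_j \neq \varnothing$, which is precisely (IP-2). With (IP-1) and (IP-2) established, Corollary~\ref{cor:IP} delivers $\varprojlim X_n \neq \varnothing$.

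The only delicate points are the purely formal verifications: that the images $F_{0 k}(X_k)$ and $F_{m j}(X_j)$ are $\CC$-subalgebraic, that $Z$ is $\CC$-algebraic, and that the chains involved are non-increasing — all of which rest on the elementary stability properties of (sub)algebraic subsets recalled above — together with the invocation of Remark~\ref{rem:non-empty-decreasing} for the nonemptiness of $Z \cap W_j$. Once these bookkeeping facts are in hand, no further obstacle remains, the substantive content being already packaged in Corollary~\ref{cor:IP}.
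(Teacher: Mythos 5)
Your proposal is correct and follows essentially the same route as the paper's own proof: establish (IP-1) and (IP-2) via (SAIP) applied to the non-increasing chains of subalgebraic images $F_{nk}(X_k)$, using Remark~\ref{rem:non-empty-decreasing} for the nonemptiness in (IP-2), and then invoke Corollary~\ref{cor:IP}. The only cosmetic differences are that you take the algebraic set in the second application of (SAIP) to be $F_{nm}^{-1}(x_n')\cap X_m$ rather than just $F_{nm}^{-1}(x_n')$ (both work, since $F_{mj}(X_j)\subset X_m$), and that you verify (IP-1) only at $n_0=0$, which is all the corollary requires.
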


\begin{proof}
Let $(A_n,F_{n m})$ be a projective sequence of $\CC$-objects and morphisms 
satisfying conditions (PSA-1) and (PSA-2) above.
Let $n \in \N$. 
 For all $k \geq n$, the image set $f_{n k}(X_k) = F_{n k}(X_k)$, 
being the image of a nonempty $\CC$-algebraic subset under a $\CC$-morphism, is a nonempty $\CC$-subalgebraic subset of $A_n$. 
As the sequence $f_{n k}(X_k)$, $k = n, n + 1, \dots$,   is non-increasing, 
we deduce from (SAIP) that
\begin{equation}
\label{e;IP-1}
X_n' = \bigcap_{k \geq n}  f_{n k}(X_k) \not= \varnothing
\end{equation}
for all $n \in \N$.
\par
Let us fix $m,n \in \N$ with $m \geq n$ and   $x_n' \in X_n'$.
In Remark \ref{rem:non-empty-decreasing},
  we observed that 
$f_{n m}^{-1}(x_n') \cap f_{m j}(X_j) \not= \varnothing$
for all $j \geq m$.    
On the other hand, we have
$$
f_{n m}^{-1}(x_n') \cap f_{m j}(X_j) = F_{n m}^{-1}(x_n') \cap F_{m j}(X_j).
$$
As   the sets $F_{m j}(X_j)$, for $j = m, m + 1, \dots$, form a non-increasing sequence of $\CC$-subalgebraic subsets of $A_m$ and $F_{n m}^{-1}(x_n')$ is a $\CC$-algebraic subset of $A_m$, we get 
 $$
\bigcap_{j \geq m} f_{n m}^{-1}(x_n') \cap f_{m j}(X_j) \not= \varnothing
$$
by applying  (SAIP) again.
 This is condition (IP-2) in Corollary \ref{cor:IP}. Since (IP-1) follows from \eqref{e;IP-1}, Corollary \ref{cor:IP} ensures that $\varprojlim X_n \not= \varnothing$.
\end{proof}
% SECTION 6
\section{The closed image property}
\label{sec:closed-image}

One says that a map $f \colon X \to Y$ from a set $X$ into a topological space $Y$ has the 
\emph{closed image property}, briefly (CIP) (cf. \cite[Subsection 4.C'']{gromov-esav}), if its image $f(X)$ is closed in $Y$.

\begin{theorem}
\label{t:closed-image}
Let $\CC$ be a concrete category satisfying conditions (CFP+) and (SAIP).
Let $G$ be a group. 
Then every $\CC$-cellular automaton $\tau \colon A^G \to B^G$
satisfies   (CIP) with respect to the prodiscrete topology on $B^G$.
\end{theorem}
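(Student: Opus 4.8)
The plan is to prove that $\tau(A^G)$ is closed in the prodiscrete topology by the standard ``compactness-free'' argument: take a point $y$ in the closure of $\tau(A^G)$, encode the existence of a $\tau$-preimage of $y$ as the nonemptiness of a projective limit of $\CC$-algebraic sets, and invoke Theorem \ref{t:proj-limit-sub}. First I would reduce to the case of a countable group. Let $M$ be a memory set of $\tau$; enlarging $M$ if necessary (which preserves the property of being a $\CC$-cellular automaton, by Proposition \ref{p:carac-aca}), we may assume $1_G \in M$. Put $H := \langle M \rangle$, a finitely generated, hence countable, subgroup of $G$. By Proposition \ref{p:C-induction} the restricted cellular automaton $\tau_H \colon A^H \to B^H$ is again a $\CC$-cellular automaton, and by part (iv) of Theorem \ref{t:induction}, $\tau(A^G)$ is closed in $B^G$ if and only if $\tau_H(A^H)$ is closed in $B^H$. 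So we may replace $G$ by $H$ and assume henceforth that $G$ is countable.

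Next I would fix an exhaustion $\Omega_0 \subseteq \Omega_1 \subseteq \cdots$ of $G$ by finite subsets with $\bigcup_n \Omega_n = G$, and put $\Omega_n^+ := \Omega_n M$. Then each $\Omega_n^+$ is finite, $\Omega_n \subseteq \Omega_n^+ \subseteq \Omega_{n+1}^+$ (using $1_G \in M$), and $\bigcup_n \Omega_n^+ = G$. Since $\CC$ satisfies (CFP+), each $A^{\Omega_n^+}$ is a $\CC$-object, and the restriction maps $F_{nm} \colon A^{\Omega_m^+} \to A^{\Omega_n^+}$ ($m \geq n$) are $\CC$-morphisms, being projections of concrete products; thus $(A^{\Omega_n^+}, F_{nm})$ is a projective sequence of $\CC$-objects and morphisms. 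Because $\tau(x)(g)$ depends only on $x\vert_{gM}$ and $gM \subseteq \Omega_n^+$ for every $g \in \Omega_n$, there is a well-defined truncated map $\tau_n \colon A^{\Omega_n^+} \to B^{\Omega_n}$, given by $\tau_n(z)(g) := \mu_M((z(gm))_{m \in M})$, which satisfies $\tau_n(x\vert_{\Omega_n^+}) = \tau(x)\vert_{\Omega_n}$ for all $x \in A^G$. Each coordinate map $z \mapsto \tau_n(z)(g)$ is the composite of a product projection $A^{\Omega_n^+} \to A^{gM}$, a relabelling isomorphism $A^{gM} \cong A^M$, and $\mu_M \colon A^M \to B$, all $\CC$-morphisms, so $\tau_n$ is a $\CC$-morphism into the concrete finite product $B^{\Omega_n}$.

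Now let $y$ be an arbitrary point of the closure of $\tau(A^G)$ and set $X_n := \tau_n^{-1}(y\vert_{\Omega_n}) \subseteq A^{\Omega_n^+}$, a $\CC$-algebraic subset. Since the basic neighbourhood $V(y,\Omega_n)$ meets $\tau(A^G)$, there is $x \in A^G$ with $\tau(x)\vert_{\Omega_n} = y\vert_{\Omega_n}$, whence $x\vert_{\Omega_n^+} \in X_n$; so each $X_n$ is nonempty. If $z \in X_m$ then, for $g \in \Omega_n \subseteq \Omega_m$, one has $\tau_n(F_{nm}(z))(g) = \tau_m(z)(g) = y(g)$, so $F_{nm}(X_m) \subseteq X_n$; hence $(X_n, F_{nm}\vert_{X_m})$, with ambient witnessing sequence $(A^{\Omega_n^+}, F_{nm})$, is a projective sequence of nonempty $\CC$-algebraic sets. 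By Theorem \ref{t:proj-limit-sub} and (SAIP), $\varprojlim X_n \neq \varnothing$. A point $(z_n)$ of this limit is a compatible family of partial configurations, which glue to a single $x \in A^{\bigcup_n \Omega_n^+} = A^G$; for any $g \in G$, choosing $n$ with $g \in \Omega_n$ gives $\tau(x)(g) = \tau_n(z_n)(g) = y(g)$, so $\tau(x) = y$. Thus $y \in \tau(A^G)$ and $\tau(A^G)$ is closed.

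The main obstacle is the careful choice of the inflated windows $\Omega_n^+ = \Omega_n M$ and the verification that the resulting data genuinely form a projective sequence of $\CC$-algebraic sets in the sense required by Theorem \ref{t:proj-limit-sub}: the truncated maps $\tau_n$ must be honest $\CC$-morphisms between $\CC$-objects, the $F_{nm}$ must be $\CC$-morphisms compatible with composition and restricting to the $X_n$, and the $X_n$ must all be nonempty --- this last point being precisely where the hypothesis $y \in \overline{\tau(A^G)}$ is used. Once this framework is in place, the reduction to countable $G$, the gluing step, and the final check $\tau(x) = y$ are all routine.
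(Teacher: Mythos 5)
Your proposal is correct and follows essentially the same route as the paper's proof: reduce to a countable group via Proposition \ref{p:C-induction} and Theorem \ref{t:induction}(iv), build truncated $\CC$-morphisms between finite concrete powers, realize the fibers over the restrictions of $y$ as a projective sequence of nonempty $\CC$-algebraic sets, and invoke Theorem \ref{t:proj-limit-sub}. The only differences are cosmetic: you inflate the output windows $\Omega_n$ to input windows $\Omega_n M$, whereas the paper fixes input windows $E_n \supset M$ and deflates to output windows $F_n = \{g : gM \subset E_n\}$, and you perform the countability reduction at the start rather than at the end.
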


\begin{proof}
Let $\tau \colon A^G \to B^G$ be a $\CC$-cellular automaton.
Let $M \subset G$ be a memory set for $\tau$ and let $\mu_M \colon A^M \to B$ denote the associated local defining map.
\par
Suppose first that the group $G$ is countable.
Then we can find a sequence $(E_n)_{n \in \N}$ of finite subsets of $G$ such that $G = \bigcup_{n \in \N} E_n$,
$M \subset E_0$,  and $E_n \subset E_{n + 1}$ for all $n \in \N$.
Consider, for each $n \in \N$, the finite subset $F_n \subset G$ defined by  
$F_n = \{g \in G: gM \subset E_n\}$. 
Note that $G = \bigcup_{n \in \N} F_n$,
$1_G \in F_0$,  and $F_n \subset F_{n + 1}$ for all $n \in \N$.
\par
It follows from \eqref{e;local-property} that if $x$ and $x'$ are elements in $A^G$ such that $x$ and $x'$ coincide on $E_n$, then the configurations $\tau(x)$ and $\tau(x')$ coincide on $F_n$.  Therefore, we can define a map
$\tau_n \colon A^{E_n} \to B^{F_n}$ by setting 
$$
\tau_n(u) = (\tau(x))\vert_{F_n}
$$ 
for all $u \in A^{E_n}$, where $x \in A^G$ denotes an arbitrary configuration extending $u$.
Observe that both $A^{E_n}$ and $B^{F_n}$ are $\CC$-objects  as they are finite Cartesian powers of the $\CC$-objects $A$ and $B$ respectively. 
\par
We claim that $\tau_n$ is a $\CC$-morphism for every $n \in \N$. Indeed, let $n \in \N$. For every $g \in F_n$, we have $gM \subset E_n$. Denote, for each $g \in F_n$,  by $\pi_{n,g} \colon A^{E_n} \to A^{gM}$
the projection $\CC$-morphism. Consider also, for all $g \in G$,  the $\CC$-isomorphism
$\phi_g \colon A^{gM} \to A^M$  
defined by $\left(\phi_g(u)\right)(m) = u(gm)$ for all  $m \in M$. 
Then, for each $g \in F_n$, the map $F_g := \mu_M \circ \phi_g \circ \pi_{n,g} \colon A^{E_n} \to B$   is a $\CC$-morphism since it is the composite of $\CC$-morphisms.
Observe that
$F_g(x) = \tau(x)(g)$ for all $x \in A^G$. This shows that  $\tau_n = \prod_{g \in F_n} F_g$ and the claim  follows.
\par
Let now $y \in B^G$ and suppose that $y$ is in the closure of $\tau(A^G)$ for the prodiscrete topology on $B^G$.
Then, for every $n \in \N$, we can find $z_n \in A^G$ such that 
\begin{equation}
 \label{e;y-z-n-B-n}
y\vert_{F_n} =   (\tau(z_n))\vert_{F_n}.
\end{equation}
Consider, for each $n \in \N$, the $\CC$-algebraic subset $X_n \subset A^{E_n}$ defined by 
$X_n = \tau_n^{-1}(y\vert_{F_n})$.
We have $X_n \not= \varnothing$ for all $n \in \N$  since  $z_n\vert_{E_n} \in X_n$ 
by \eqref{e;y-z-n-B-n}. 
Observe that, for  $m \geq n$, the projection $\CC$-morphism  $F_{n m} \colon A^{E_m} \to A^{E_n}$   induces by restriction a map $f_{nm} \colon X_m \to X_n$. 
Conditions (PSA-1) and (PSA-2) are trivially satisfied so that
$(X_n,f_{nm})$ is a projective sequence of nonempty $\CC$-algebraic sets.
Since $\CC$ satisfies (SAIP),
 we have $\varprojlim X_n \not= \varnothing$ by Theorem \ref{t:proj-limit-sub}.
Choose an element $(x_n)_{n \in \N} \in \varprojlim X_n$. Thus
$x_n \in A^{E_n}$  and $x_{n + 1}$ coincides with $x_n$ on $E_n$ for all $n \in \N$. 
As $G = \cup_{n \in \N} E_n$, we deduce that there exists a (unique) configuration 
$x \in A^G$ such that $x\vert_{E_n} = x_n$ for all $n \in \N$. 
Moreover, we have $\tau(x)\vert_{F_n}= \tau_n(x_n) = y_n = y\vert_{F_n}$ for all $n$ since $x_n \in X_n$. 
As $G = \cup_{n \in \N} F_n$, this shows that $\tau(x) = y$.
This completes the proof that $\tau$ satisfies condition (CIP) in the case when $G$ is countable.
\par
Let us treat now the case of an arbitrary (possibly uncountable) group $G$.
Let $H$ denote the subgroup of $G$ generated by the memory set $M$.
Observe that $H$ is countable since $M$ is finite.
The restriction cellular automaton $\tau_H \colon A^H \to B^H$ is a $\CC$-cellular automaton by Proposition \ref{p:C-induction}.
Thus, by the first part of the proof, 
$\tau_H$ satisfies condition (CIP), that is,  
$\tau_H(A^H)$ is closed in $B^H$ for the prodiscrete topology on $B^H$.  
By applying Theorem \ref{t:induction}.(iii), we deduce that $\tau(A^G)$ is also closed in $B^G$ for the prodiscrete topology on $B^G$.  
Thus $\tau$ satisfies condition (CIP).
\end{proof}

From Theorem \ref{t:closed-image} and Examples \ref{ex:SAIP} we recover
results from \cite{gromov-esav}, \cite{garden}, \cite[Lemma 3.2]{artinian}, \cite{induction}, \cite{periodic}, and \cite{algebraic}. 

\begin{corollary}
\label{c:closed-image}
Let $G$ be a group. Then every $\CC$-cellular automaton $\tau \colon A^G \to B^G$
satisfies (CIP) with respect to the prodiscrete topology on  $B^G$, when $\CC$ is one of the following concrete categories:
\begin{itemize}
\item 
$\Setf$, the category of finite sets;
\item 
$K$-$\Vectfd$, the category of finite-dimensional vector spaces over an arbitrary field $K$;
\item 
$R$-$\ModArt$, the category of left Artinian modules over an arbitrary ring $R$;
\item 
$R$-$\Modfg$, the category of finitely generated left modules over an arbitrary left-Artinian ring $R$;
\item 
$K$-$\Aal$, the category of affine algebraic sets over an arbitrary uncountable algebraically closed field $K$;
\item
$\CHT$, the category of compact Hausdorff topological spaces;
\item 
$\Man$, the category of compact topological manifolds.
\end{itemize}
\end{corollary}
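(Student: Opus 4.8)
The plan is to deduce the Corollary directly from Theorem \ref{t:closed-image}. That theorem asserts (CIP) for every $\CC$-cellular automaton whenever the concrete category $\CC$ satisfies both (CFP+) and (SAIP), so it suffices to check that each of the seven categories in the list enjoys these two properties; everything needed has in fact already been established in Section \ref{s:ca-concrete-categories} and in Examples \ref{ex:SAIP}, and the proof amounts to assembling those facts. I do not anticipate a genuine obstacle: the only point requiring a moment's attention is that a couple of the categories are handled indirectly, by realizing them as subcategories of categories already known to have the relevant property.

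First I would verify (CFP+). The categories $R$-$\ModArt$ and, in particular, $K$-$\Vectfd = K$-$\ModArt$ have the zero module as a zero object, and the direct sum of two Artinian modules is again Artinian, so they satisfy (CFP), and hence (CFP+) follows as noted in Subsection \ref{s:cc}. When $R$ is left-Artinian, every finitely generated left $R$-module is Artinian, so $R$-$\Modfg$ is a full subcategory of $R$-$\ModArt$; it has a zero object and satisfies (CFP), whence (CFP+). For $\Setf$, $K$-$\Aal$, $\CHT$ and $\Man$, the validity of (CFP) was recorded in Subsection \ref{s:cc}, and (CFP+) holds there because the terminal objects are precisely the singletons and every map out of a singleton underlies a morphism.

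Next I would verify (SAIP). For $\Setf$, for $K$-$\Vectfd = K$-$\ModArt$, and for $R$-$\ModArt$, each of these categories is Artinian (Examples \ref{ex:SAIP}.2, \ref{ex:SAIP}.7, \ref{ex:SAIP}.8), hence satisfies (SAIP) by Proposition \ref{p:art-saip}. For $R$-$\Modfg$ over a left-Artinian ring $R$, the same observation that finitely generated modules are Artinian exhibits this category as a subcategory of the Artinian category $R$-$\ModArt$, and (SAIP) passes to subcategories. For $K$-$\Aal$ with $K$ uncountable and algebraically closed, (SAIP) is Example \ref{ex:SAIP}.14. For $\CHT$, (SAIP) follows from the finite intersection property of closed subsets of a compact space, and its subcategory $\Man$ inherits (SAIP) as well (Example \ref{ex:SAIP}.11). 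With (CFP+) and (SAIP) now checked in every case, Theorem \ref{t:closed-image} applies and yields the conclusion; the slightly delicate steps, as anticipated, are merely the two subcategory arguments for $R$-$\Modfg$ over a left-Artinian ring and for $\Man$.
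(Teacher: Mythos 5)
Your proposal is correct and follows exactly the route the paper intends: the Corollary is stated as an immediate consequence of Theorem \ref{t:closed-image} combined with the verifications of (CFP+) from Subsection \ref{s:cc} and of (SAIP) from Examples \ref{ex:SAIP}, which is precisely the assembly you carry out. The paper gives no further argument, so your explicit check of each category (including the two subcategory reductions for $R$-$\Modfg$ and $\Man$) simply makes the paper's implicit proof explicit.
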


\begin{remark}
When $\CC$ is  $\Setf$,   we can directly deduce that any $\CC$-cellular automaton $\tau \colon A^G \to B^G$ satisfies (CIP) from the compactness of $A^G$, the continuity of $\tau$, and the fact that $B^G$ is Hausdorff. 
\end{remark} 
 
\begin{remarks}
1) It is shown in \cite{periodic} (see also \cite[Example 8.8.3]{book}) that if $G$ is a 
non-periodic group and $A$ is an infinite set, then there exists a cellular automaton
$\tau \colon A^G \to A^G$ whose image $\tau(A^G)$ is not closed in $A^G$ for the prodiscrete
topology.
\par
2) Let $K$ be a field and let $\CC = K$-$\Vect$. It is shown in \cite{periodic} (see also \cite[Example 8.8.3]{book}) that is $G$ is a non-periodic group and $A$ is an infinite-dimensional vector space over $K$, then there exists a $\CC$-cellular automaton $\tau \colon A^G \to A^G$ whose image $\tau(A^G)$ is not closed in $A^G$ for the prodiscrete topology.
\par
3) In \cite[Remark 5.2]{algebraic}, it is shown that if $G$ is a non-periodic group, then there exists a $\R$-$\Aal$-cellular automaton 
$\tau \colon \R^G \to \R^G$ whose image is not closed in $\R^G$ for the prodiscrete topology. 
\end{remarks}

% SECTION 7 SURJUNCTIVITY
\section[Surjunctivity of $\CC$-cellular automata]{Surjunctivity of $\CC$-cellular automata over residually finite groups}
\label{sec:surjunctivity-rf-groups}

\subsection{Injectivity and surjectivity in concrete categories}
Let $\CC$ be a category and $f \colon A \to B$ a $\CC$-morphism.
We recall that $f$ is said to be a $\CC$-\emph{monomorphism}
provided that for any two $\CC$-morphisms $g_1, g_2 \colon C \to A$ 
the equality $f \circ g_1 = f \circ g_2$ implies $g_1 = g_2$. 
We also recall that $f$ is called a $\CC$-\emph{epimorphism}
if for any two $\CC$-morphisms 
$h_1, h_2 \colon B \to C$ the equality $h_1 \circ f = h_2 \circ f$ implies
$h_1 = h_2$. In other words, a $\CC$-monomorphism (resp. $\CC$-epimorphism) is a left-cancellative (resp. right-cancellative) $\CC$-morphism.
\par
Suppose now that $\CC$ is a concrete category. 
Then one says that a $\CC$-morphism $f \colon A \to B$ is \emph{injective} 
(resp. \emph{surjective}, resp. \emph{bijective}) if (the underlying map of) $f$ is injective (resp. surjective, resp. bijective) in the set-theoretical sense.
It is clear that every injective (resp.~surjective) $\CC$-morphism is a $\CC$-monomorphism 
(resp.~a $\CC$-epimorphism). 
In concrete categories such as $\Set$, $\Setf$, $\Grp$, $R$-$\Mod$, $R$-$\Modfg$, $\Top$, $\CHT$, or $\Man$   the converse is also true so that the class of 
 injective (resp. surjective) morphisms coincide with the class of monomorphisms (resp. epimorphisms) in these categories
(the fact that every epimorphism is surjective in $\Grp$ is a non-trivial result, 
see \cite{linderholm}).
 However, there exist concrete categories admitting monomorphisms (resp. epimorphisms) that fail to be injective (resp. surjective).
 
\begin{examples}
1) Let $\CC $ be the full subcategory of $\Grp$ consisting of all divisible Abelian groups. Recall that an Abelian group $G$ is called  \emph{divisible} if for each $g \in G$ and each integer $n \geq 1$, there is an element $g' \in G$ such that $ng' = g$. 
Then the quotient map $f \colon \Q  \to \Q/\Z$ is a non-injective $\CC$-monomorphism.
\par
2)  Let $\CC = \Rng$.  Then the inclusion map $f \colon \Z \to \Q$ is a non-surjective $\CC$-epimorphism.
\par
3) Let $\CC$ be the full subcategory of $\Top$ whose objects are Hausdorff spaces.
Then the inclusion map $f \colon \Q \to \R$ is a non-surjective $\CC$-epimorphism.
\end{examples}

\subsection{Surjunctive categories}
\begin{definition}
A concrete category $\CC$ is said to be \emph{surjunctive} if  every injective 
$\CC$-endomorphism $f \colon A \to A$ is surjective (and hence bijective).
\end{definition}

\begin{examples}
\label{ex:surjunctive}
1) The category $\Set$ is not surjunctive but $\Setf$ is.  
Indeed,  a set $A$ is finite if and only if every injective map $f \colon A \to A$ is surjective
(Dedekind's characterization of infinite sets).
\par
2) The map $f \colon \Z \to \Z$, defined by $f(n) = 2n$ for all $n \in \Z$, is injective but not surjective. This shows that the categories $\Grp$, $\Z$-$\Mod$ and   $\Z$-$\Modfg$  are not surjunctive. 
\par
3) Let $R$ be a nonzero ring.
Then the  map $f \colon R[t] \to R[t]$, defined by $P(t) \mapsto P(t^2)$, is   injective but not surjective.
As $f$ is both a ring and a $R$-module endomorphism, 
this shows that the categories $\Rng$ and $R$-$\Mod$ are not surjunctive.
\par
4) If $k$ is a field and $k(t)$ is the field of rational functions on $k$, then the map
$f \colon k(t) \to k(t)$, defined by $F(t) \mapsto F(t^2)$, is a field homomorphism which is injective but not surjective. This shows that the category $\Fld$ is not surjunctive. 
\par
5) Let $K$ be a field. Then the category $ K$-$\Vect$ is not surjunctive but $ K$-$\Vectfd$ is. Indeed, it is well known from basic linear algebra that for a vector space $A$ over $K$ one has $\dim_K(A)< \infty$ if and only if every injective $K$-linear map $f \colon A \to A$ is surjective.
\par
6) If $R$ is a ring then the category $R$-$\ModArt$ of Artinian left-modules over $R$
is surjunctive (see e.g. \cite[Proposition 2.1]{artinian}).
\par
7) Let $R$ be a left Artinian ring. Then the category $R$-$\Modfg$ of finitely-generated
left $R$-modules over $R$ is surjunctive (cf. \cite[Proposition~2.5]{artinian}).
\par
8) In \cite{vasconcelos}, it is  shown that, for a  commutative ring
$R$, the category $R$-$\Modfg$ is surjunctive if and only if all prime ideals in $R$ are maximal (if $R$ is a nonzero ring, this amounts to saying that $R$ has Krull dimension $0$).
The non-commutative rings $R$ such that $R$-$\Modfg$ is surjunctive are characterized 
in \cite{armendariz}.
 \par
9) Let $K$ be a field. If $K$ is algebraically closed, then the category $K$-$\Aal$ of
affine algebraic sets over $K$ is surjunctive: this is a particular case of the \emph{Ax-Grothendieck theorem} \cite[Theorem C]{ax-elementary}, \cite{ax-injective}, and \cite[Proposition 10.4.11]{grothendieck-ega-3} (see also \cite{borel-basel}).
\par
When the ground field $K$ is not algebraically closed, the category $K$-$\Aal$ may fail to be surjunctive. For instance, the injective polynomial map $f \colon \Q \to \Q$ 
defined by $f(x) = x^3$ is not surjective since $2 \notin f(\Q)$. This shows that
the category $\Q$-$\Aal$ is not surjunctive.
If $k$ is any field of characteristic $p > 0$ (e.g., $k = \Z/p\Z$) and we denote by $K = k(t)$ the field of rational functions with coefficients in $k$ in one indeterminate $t$, then, the map $f \colon k(t) \to k(t)$, defined by $f(R) = R^p$ for all $R \in k(t)$, is injective (it is the \emph{Frobenius endomorphism} of the field $k(t)$) but   not surjective since there is no $R \in k(t)$ such that $t = R^p$. Thus, the category $k(t)$-$\Aal$ is not surjunctive for any field $k$ of characteristic $p > 0$.
\par
 10) The categories $\Top$ and $\CHT$ are not surjunctive.
Indeed, if we consider the unit interval $[0,1] \subset \R$,
the continuous map $f \colon [0,1] \to [0,1]$, defined by $f(x) = x/2$ for all $x \in [0,1]$, is injective but not surjective. 
\par
11) Let $M$ be a compact topological manifold and suppose that $f \colon M \to M$ is 
an injective continuous map. Then $f(M)$ is open in $M$ by Brouwer's {\it invariance of domain} and closed by compactness of $M$. Since $M$ is connected, we deduce
that $f(M) = M$. This shows that the category $\Man$ is surjunctive.
\end{examples}

\subsection{Surjunctive groups}
 
\begin{definition}
 Let $\CC$ be a concrete category satisfying condition (CFP+).
One  says that a group $G$ is $\CC$-\emph{surjunctive} if every injective $\CC$-cellular
automaton $\tau \colon A^G \to A^G$ is surjective. In other words, 
$G$ is $\CC$-surjunctive if the category $\CA(G,\CC)$ of $\CC$-cellular automata over $G$ is surjunctive.
\end{definition}

\begin{remarks} 
1) The trivial group is $\CC$-surjunctive if and only if the category 
$\CC$ is surjunctive. 
\par
2)  There exist no $\CC$-surjunctive groups in the case when the category $\CC$ is not surjunctive.
Indeed, if $f \colon A \to A$ is a $\CC$-morphism which is injective but not surjective and $G$ is any group, then the map $\tau \colon A^G \to A^G$, defined by $\tau(x)(g) = f(x(g))$ for all $x \in A^G$ and $g \in G$,  is a $\CC$-cellular automaton (with memory set $\{1_G\}$) which is injective but not surjective.
\par
3) A group $G$ is $\Setf$-surjunctive if and only if, for any finite alphabet $A$, every injective cellular automaton
$\tau \colon A^G \to A^G$ is surjective. 
Gottschalk \cite{gottschalk} called such a group a \emph{surjunctive group}.
\end{remarks}

\subsection{Residually finite groups}
Recall that a group $G$ is called \emph{residually finite} if the intersection of its finite index subgroups is reduced to the identity element.
This is equivalent to saying that if $g_1$ and $g_2$ are distinct elements in $G$, then we can find a finite group $F$ and a group homomorphism $f \colon G \to F$ such that $f(g_1) \not= f(g_2)$.
All finite groups, all free groups, all finitely generated nilpotent groups (and hence all finitely generated abelian groups, e.g. $\Z^d$ for any $d \in \N$),
and all fundamental groups of compact topological manifolds of dimension $\leq 3$    
 are residually finite. 
 All finitely generated linear groups are residually finite by a theorem of Mal'cev.
 On the other hand, the additive group $\Q$, the group of permutations of $\N$, 
 the Baumslag-Solitar group  $BS(2,3) = \langle a,b : a^{-1}b^2a = b^3 \rangle$, and all infinite simple groups provide examples of groups which are not residually finite.
\par
The following dynamical characterization of residual finiteness is well known (see e.g. \cite[Theorem 2.7.1]{book}):

\begin{theorem}
\label{t:characterization-rf}
Let $G$ be a group. Then the following conditions are equivalent:
\begin{enumerate}[{\rm (a)}]
\item 
the group $G$ is residually finite;
\item 
for every set $A$, the set of points of $A^G$ which have a finite $G$-orbit is dense
in $A^G$ for the prodiscrete topology.
\end{enumerate}
\qed
\end{theorem}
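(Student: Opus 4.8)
The plan is to prove the two implications separately, the common thread being the standard dictionary between configurations with finite $G$-orbit and configurations invariant under a finite-index subgroup. First I would record the two elementary facts used throughout: (i) by the orbit--stabilizer relation, $y \in A^G$ has finite $G$-orbit if and only if its stabilizer $H_y := \{g \in G : gy = y\}$ has finite index in $G$; and (ii) if a subgroup $N \leq G$ satisfies $y(nh) = y(h)$ for all $n \in N$ and $h \in G$ (equivalently, $y$ is constant on every right coset $Ng$, i.e.\ $y$ factors through the quotient map $q \colon G \to N\backslash G$), then $N \subseteq H_y$; consequently, whenever $[G:N] < \infty$, such a configuration automatically has finite $G$-orbit.

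\emph{(a) $\Rightarrow$ (b).} Fix a set $A$ --- which we may assume nonempty, as otherwise $A^G = \varnothing$ and there is nothing to prove --- a configuration $x \in A^G$, and a finite window $\Omega \subset G$; the goal is to exhibit a finite-orbit configuration in $V(x,\Omega)$. Consider the finite set $S := \{g_1 g_2^{-1} : g_1, g_2 \in \Omega,\ g_1 \neq g_2\}$, which does not contain $1_G$. Using residual finiteness, for each $s \in S$ there is a finite-index subgroup of $G$ omitting $s$; intersecting these finitely many subgroups yields a single finite-index $N \leq G$ with $N \cap S = \varnothing$, equivalently such that $\Omega \hookrightarrow G \xrightarrow{q} N\backslash G$ is injective (one may take $N$ normal via its normal core, but this is unnecessary). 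Define $\bar y \colon N\backslash G \to A$ by $\bar y(Ng) := x(g)$ for $g \in \Omega$ --- consistent precisely because of the injectivity just secured --- and $\bar y := a_0$ on all other cosets, $a_0 \in A$ fixed. Then $y := \bar y \circ q$ is constant on right cosets of $N$, hence has finite $G$-orbit by (i)--(ii), and $y\vert_\Omega = x\vert_\Omega$ by construction, so $y \in V(x,\Omega)$.

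\emph{(b) $\Rightarrow$ (a).} Here I would specialize to $A = \{0,1\}$. Given $g \in G \setminus \{1_G\}$, apply the density hypothesis to the configuration $x$ with $x(1_G) = 1$ and $x \equiv 0$ elsewhere, and to the window $\Omega = \{1_G, g\}$: this produces $y \in A^G$ with finite $G$-orbit and $y(1_G) = 1$, $y(g) = 0$. By (i), $H_y$ has finite index in $G$; and $g \notin H_y$, because $gy = y$ would give $y(g^{-1}h) = y(h)$ for all $h$, whence (taking $h = g$) $y(1_G) = y(g)$, contradicting $1 \neq 0$. Thus each nontrivial element of $G$ lies outside some finite-index subgroup, so the intersection of all finite-index subgroups of $G$ is trivial; that is, $G$ is residually finite.

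The argument is short and elementary, and I expect no genuine obstacle; the one place calling for a little care is the construction in (a) $\Rightarrow$ (b), where one must simultaneously arrange that the model configuration be periodic --- invariant under a finite-index subgroup, so as to force finiteness of its orbit --- and that it reproduce the prescribed values of $x$ on the arbitrary finite window $\Omega$. This is precisely what residual finiteness delivers once it is phrased as ``finitely many group elements can be separated in a finite quotient'', which is the reformulation recalled just before the theorem.
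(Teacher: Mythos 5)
Your proof is correct, and it is essentially the standard argument: the paper states this result without proof, citing \cite[Theorem 2.7.1]{book}, where the same construction is used --- identifying finite-orbit configurations with elements of $\Fix(H) \cong A^{H\backslash G}$ for $H$ of finite index, separating the finitely many points of $\Omega$ in a finite quotient for (a)~$\Rightarrow$~(b), and testing with $A=\{0,1\}$ and the window $\{1_G,g\}$ for (b)~$\Rightarrow$~(a). No gaps.
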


\subsection{Surjunctivity of residually finite groups}

\begin{theorem}
\label{t:C-surjunctivity-rf}
Let $\CC$ be a concrete category satisfying conditions (CFP+) and (SAIP). Suppose
that $\CC$ is surjunctive. Then every residually finite group is $\CC$-surjunctive.
In other words, every injective $\CC$-cellular automaton $\tau \colon A^G \to A^G$ is surjective  when $G$ is residually finite (e.g., $G = \Z^d$). 
\end{theorem}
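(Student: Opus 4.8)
The plan is to combine the closed image property of $\CC$-cellular automata (Theorem \ref{t:closed-image}) with the density of finite-orbit configurations over residually finite groups (Theorem \ref{t:characterization-rf}), reducing the surjectivity of a given injective $\tau$ to an assertion about a finite quotient of $G$, where the hypothesis that $\CC$ is surjunctive applies directly. So fix a residually finite group $G$, a $\CC$-object $A$, and an injective $\CC$-cellular automaton $\tau \colon A^G \to A^G$, say with memory set $M$ and $\CC$-morphism local defining map $\mu_M \colon A^M \to A$. Since $\CC$ satisfies (CFP+) and (SAIP), Theorem \ref{t:closed-image} gives that $\tau(A^G)$ is closed in $A^G$ for the prodiscrete topology, so it suffices to show that $\tau(A^G)$ is dense; and by Theorem \ref{t:characterization-rf} the configurations of $A^G$ with finite $G$-orbit are dense, so it is enough to prove that every such configuration lies in $\tau(A^G)$.

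Next I would take a configuration $y \in A^G$ with finite $G$-orbit. Its stabilizer has finite index in $G$, so the kernel $N$ of the induced $G$-action on the corresponding finite coset space is a finite-index normal subgroup of $G$ contained in that stabilizer; in particular $ny = y$ for all $n \in N$. Consider the set $\Fix(N) = \{x \in A^G : nx = x \text{ for all } n \in N\}$ of $N$-periodic configurations; there is a natural bijection $\Fix(N) \cong A^{G/N}$, and the latter is a $\CC$-object because $G/N$ is finite and $\CC$ satisfies (CFP). By $G$-equivariance $\tau$ maps $\Fix(N)$ into itself, and a short computation with the quotient map $\pi \colon G \to G/N$ shows that the induced self-map $\overline\tau$ of $A^{G/N}$ is the cellular automaton over the finite group $G/N$ with memory set $\pi(M)$ and local defining map $\mu_M \circ \rho$, where $\rho \colon A^{\pi(M)} \to A^M$ is the $\CC$-morphism arising from the surjection $M \to \pi(M)$ as a finite product of coordinate projections. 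Hence $\overline\tau$ is a $\CC$-cellular automaton over $G/N$, and, the group being finite, it is in fact a $\CC$-morphism by the argument proving the implication (b) $\Rightarrow$ (a) of Proposition \ref{p:global-CC-equiv-local}, which for a finite group uses only finite products and does not invoke (CP).

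Finally, $\overline\tau$ is injective, being the restriction of the injective map $\tau$ to the $\tau$-invariant subset $\Fix(N)$, so the surjunctivity of $\CC$ forces $\overline\tau \colon A^{G/N} \to A^{G/N}$ to be surjective; that is, $\tau(\Fix(N)) = \Fix(N)$, whence $y \in \Fix(N) = \tau(\Fix(N)) \subseteq \tau(A^G)$. Since $y$ was an arbitrary finite-orbit configuration, $\tau(A^G)$ is dense, hence (being closed) equal to $A^G$, and $\tau$ is surjective; thus $G$ is $\CC$-surjunctive. I expect the only step that is not a formal assembly of the cited results to be the verification that reduction modulo $N$ carries $\tau$ to a genuine $\CC$-morphism of the finite power $A^{G/N}$, i.e. that the reduced local defining map is still a $\CC$-morphism; all of the substantive finiteness content is already packaged into the hypothesis that $\CC$ is surjunctive and, upstream of it, into the SAIP-based proof of the closed image property.
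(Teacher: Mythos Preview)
Your proof is correct and follows essentially the same route as the paper's: reduce surjectivity to hitting every finite-orbit configuration via the closed image property and density of periodic points, then pass to the finite quotient $A^{G/N}$ and invoke surjunctivity of $\CC$. The only cosmetic differences are that the paper works with arbitrary finite-index subgroups $H$ and the coset set $H\backslash G$ (verifying directly that the conjugated map $\widetilde\sigma$ on $A^{H\backslash G}$ is a $\CC$-morphism, via the same Lemma \ref{l:rho-induces-morphism} you implicitly use), whereas you restrict to normal $N$ so that $G/N$ is a group and frame $\overline\tau$ as a $\CC$-cellular automaton over it; your observation that the (b) $\Rightarrow$ (a) direction of Proposition \ref{p:global-CC-equiv-local} needs only (CFP) when the group is finite is exactly right.
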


Before proving Theorem \ref{t:C-surjunctivity-rf}, let us introduce some additional notation.
\par
Let $A$, $M$, and $N$ be sets.
Suppose that we are given a map $\rho \colon M \to N$.
Then $\rho$ induces a map $\rho^* \colon A^N \to A^M$ defined by $\rho^*(y) = y \circ \rho$ 
for all $y \in A^N$.

\begin{lemma}
\label{l:rho-induces-morphism}
Let $\CC$ be a concrete category satisfying condition (CFP).
Let $A$ be a $\CC$-object and suppose that we are given a map $\rho \colon M \to N$, where $M$ and $N$ are finite sets.
Then the induced map $\rho^* \colon A^N \to A^M$ is a $\CC$-morphism.
\end{lemma}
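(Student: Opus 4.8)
The plan is to produce the required $\CC$-morphism directly from the universal property of the concrete product $A^M$. Since $\CC$ satisfies (CFP) and $M, N$ are finite, both $A^M$ and $A^N$ are concrete products of copies of $A$; write $p_s \colon A^N \to A$ (for $s \in N$) and $q_m \colon A^M \to A$ (for $m \in M$) for their projection $\CC$-morphisms. Because these products are \emph{concrete}, the pairs $(A^N, (p_s)_{s \in N})$ and $(A^M, (q_m)_{m \in M})$ are also products in $\Set$, so we may identify $A^N$ with the set $\{x \colon N \to A\}$ and $A^M$ with $\{x \colon M \to A\}$ in such a way that $p_s$ is the evaluation map $x \mapsto x(s)$ and $q_m$ is the evaluation map $x \mapsto x(m)$.

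First I would apply the universal property of the product $A^M$ in $\CC$ to the family of $\CC$-morphisms $\bigl( p_{\rho(m)} \colon A^N \to A \bigr)_{m \in M}$ (each $p_{\rho(m)}$ is a projection of $A^N$, hence a $\CC$-morphism): this yields a unique $\CC$-morphism $g \colon A^N \to A^M$ satisfying $q_m \circ g = p_{\rho(m)}$ for every $m \in M$. It then remains to check that the underlying map of $g$ coincides with $\rho^*$. For $y \in A^N$ and $m \in M$ one computes, using the identifications above, $q_m(g(y)) = (q_m \circ g)(y) = p_{\rho(m)}(y) = y(\rho(m)) = (y \circ \rho)(m) = \rho^*(y)(m)$. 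Since the projections $q_m$ jointly separate the points of $A^M$, this forces $g(y) = \rho^*(y)$; hence $\rho^*$ is the underlying map of the $\CC$-morphism $g$, which is exactly what the statement asserts.

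There is no genuine difficulty here: the only point requiring care is the bookkeeping between a $\CC$-object and its underlying set, that is, invoking \emph{concreteness} of the products (not merely their existence in $\CC$) in order to realize $p_s$ and $q_m$ as honest evaluation maps; once that identification is fixed, the rest is the one-line computation above. The degenerate case $M = \varnothing$ is handled automatically: then $A^M$ is a terminal object of $\CC$, whose underlying set is a single point by (CFP), the indexing family $(p_{\rho(m)})_{m \in M}$ is empty, and $g$ is simply the unique $\CC$-morphism $A^N \to A^M$, whose underlying map is $\rho^*$.
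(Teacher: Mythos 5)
Your proposal is correct and is essentially the paper's own argument: the paper's one\mbox{-}line proof writes $\rho^* = \prod_{m \in M} \pi_{\rho(m)}$, which is exactly the morphism $g$ you obtain from the universal property of the concrete product $A^M$ applied to the family $(\pi_{\rho(m)})_{m \in M}$. Your version merely makes explicit the identification of the underlying map with $\rho^*$ (via concreteness) and the empty case, both of which the paper leaves implicit.
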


\begin{proof}
We have $\rho^*(y)(m) = y(\rho(m))$ for all $m \in M$ and $y \in A^N$. 
Thus, if we denote by $\pi_n \colon A^N \to A$, $n \in N$, the $\CC$-morphism given by the projection map on the $n$-factor,  we have $\rho^* = \prod_{m \in M} \pi_{\rho(m)}$. Consequently, $\rho^*$ is a $\CC$-morphism.
\end{proof}

\begin{proof}[Proof of Theorem \ref{t:C-surjunctivity-rf}]
Let $G$ be a residually finite group and suppose that $\tau \colon A^G \to A^G$ is an injective 
$\CC$-cellular automaton. For every finite index subgroup $H$ of $G$ we denote by  
$\Fix(H)$ the subset of $A^G$ consisting of all configurations
$x \in A^G$ that are fixed by $H$, that is, such that $hx = x$ for all $h \in H$.  
We also denote by $H \backslash G = \{Hg : g \in G \}$ the finite set consisting of all right cosets of $H$ in $G$ and by $\rho_H \colon G \to H \backslash G$ the canonical surjective map sending  
each $g \in G$ to $Hg$.
Consider the induced map $\rho_H^*\colon A^{H \backslash G} \to A^G$.
One immediately checks that  $\rho_H^*(A^{H \backslash G}) \subset \Fix(H)$. In fact,
the map $\rho_H^* \colon A^{H \backslash G} \to \Fix(H)$ is bijective  (see e.g. \cite[Proposition 1.3.3]{book}).
Observe now that by $G$-equivariance of $\tau$ we have $\tau(\Fix(H)) \subset \Fix(H)$.
Denote by $\sigma:=\tau\vert_{\Fix(H)} \colon \Fix(H) \to \Fix(H)$ the map obtained by restricting $\tau$ to   $\Fix(H)$ and let
$\widetilde{\sigma} \colon A^{H \backslash G} \to A^{H \backslash G}$ be the conjugate of $\sigma$ by $\rho^*_H$, that is, the map given by
$\widetilde{\sigma} = (\rho_H^*)^{-1} \circ \sigma \circ \rho_H^*$.
We claim that $\widetilde{\sigma}$ is a $\CC$-morphism.
To see this, it suffices to prove that,
for each $t \in H \backslash G$, the map $\pi_t \colon A^{H \backslash G} \to A$ defined by
$\pi_t(y) = \widetilde{\sigma}(y)(t)$ is a $\CC$-morphism, since then 
$\widetilde{\sigma} = \prod_{t \in T} \pi_t$. Choose a memory set $M$ for $\tau$ and let $\mu_M \colon A^M \to A$ denote the associated local defining map.
For $t =gH \in T$, consider the map  $\psi_t \colon M \to H \backslash G$  defined by $\psi_t(m) = \rho_H(gm)$ for all $m \in M$. It is obvious that $\psi_t$ is well defined
(i.e. it does not depend on the particular choice of the representative $g \in G$ of
the coset $t = Hg$). If $\psi^* \colon A^{H \backslash G} \to A^M$ is the induced map, 
we then have $\pi_{t} = \mu_M \circ \psi_t^*$. But $\mu_M$ is a $\CC$-morphism since $\tau$ is a $\CC$-cellular automaton.  On the other hand, $\psi_t^*$ is also a $\CC$-morphism
by Lemma \ref{l:rho-induces-morphism}. It follows that $\pi_t$ is a $\CC$-morphism, proving our claim.
Now observe that $\sigma \colon \Fix(H) \to \Fix(H)$ is injective since it is the restriction of $\tau$.  
As $\widetilde{\sigma}$ is conjugate to $\sigma$, we deduce that $\widetilde{\sigma}$ is injective as well. Since by our assumptions the category $\CC$ is surjunctive, we deduce that $\widetilde{\sigma}$ is surjective.
Thus, $\sigma$ is also surjective and hence $\Fix(H) = \sigma(\Fix(H)) = \tau(\Fix(H)) \subset \tau(A^G)$.
\par
Let $E \subset A^G$ denote the set of configurations whose orbit under the $G$-shift is finite. Then we have
$$
E = \bigcup_{H \in \FF} \Fix(H) \subset \tau(A^G),
$$
where $\FF$ denotes the set of all finite index subgroups of $G$.
On the other hand, the residual finiteness of $G$ implies that $E$ is dense in $A^G$ (cf. Theorem \ref{t:characterization-rf}). As $\tau(A^G)$ is closed in $A^G$ by Theorem \ref{t:closed-image}, we conclude that $\tau(A^G) = A^G$.
\end{proof}

From Theorem \ref{t:C-surjunctivity-rf}, Examples \ref{ex:surjunctive} and Examples \ref{ex:SAIP} we deduce the following:

\begin{corollary}
\label{cor:residually-finite-C-surj}
All residually finite groups are $\CC$-surjunctive when $\CC$ is one of the following concrete
categories:
\begin{itemize}
\item 
$\Setf$, the category of finite sets;
\item 
$K$-$\Vectfd$, the category of finite-dimensional vector spaces over an arbitrary field $K$;
\item 
$R$-$\ModArt$, the category of left Artinian modules over an arbitrary ring $R$;
\item 
$R$-$\Modfg$, the category of finitely generated left modules over an arbitrary left-Artinian 
ring $R$;
\item 
$K$-$\Aal$, the category of affine algebraic sets over an arbitrary uncountable algebraically closed field $K$;
\item 
$\Man$, the category of compact topological manifolds.
\end{itemize}
\end{corollary}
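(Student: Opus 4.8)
The plan is to derive the Corollary directly from Theorem \ref{t:C-surjunctivity-rf}: that theorem states that any concrete category $\CC$ which satisfies (CFP+) and (SAIP) and is surjunctive has the property that every residually finite group is $\CC$-surjunctive, so the only task is to check these three hypotheses for each of the six categories in the list and then invoke the theorem. Thus I would go through the list and, for each $\CC$, point to where (CFP+), (SAIP), and surjunctivity have already been verified in the paper.

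For (CFP+), I would recall the discussion of Subsection \ref{s:cc}. The categories $K$-$\Vectfd$, $R$-$\ModArt$, and $R$-$\Modfg$ (with $R$ left-Artinian) are closed under finite products, i.e. finite direct sums --- here one uses that a finite-dimensional vector space, an Artinian module, or a finitely generated module over a left-Artinian ring stays of the same kind under finite direct sums --- and they admit a zero object, hence satisfy (CFP+). The categories $\Setf$, $K$-$\Aal$, and $\Man$ are likewise closed under finite products (a product of two finite sets, resp. affine algebraic sets, resp. compact manifolds, is of the same kind, and the one-point object serves as terminal object, i.e. as the product of the empty family), and they satisfy (C+) because their terminal objects are singletons and every map of underlying sets out of a singleton underlies a $\CC$-morphism. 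Hence all six categories satisfy (CFP+).

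For (SAIP) and surjunctivity I would cite the appropriate items of Examples \ref{ex:SAIP} and Examples \ref{ex:surjunctive}. Concretely: $\Setf$ is Artinian, so satisfies (SAIP) by Proposition \ref{p:art-saip}, and is surjunctive by Dedekind's characterization of finite sets; $K$-$\Vectfd = K$-$\ModArt$ satisfies (SAIP) and is surjunctive since an injective endomorphism of a finite-dimensional $K$-vector space is onto; $R$-$\ModArt$ is Artinian, hence satisfies (SAIP), and is surjunctive; for $R$ left-Artinian, every finitely generated left $R$-module is Artinian, so $R$-$\Modfg$ is a full subcategory of $R$-$\ModArt$ and inherits both (SAIP) and surjunctivity; for $K$ uncountable algebraically closed, $K$-$\Aal$ satisfies (SAIP) and is surjunctive by the Ax--Grothendieck theorem; and $\Man$, being a full subcategory of $\CHT$, satisfies (SAIP) via the finite intersection property for closed sets in a compact Hausdorff space, while it is surjunctive by Brouwer's invariance of domain together with compactness and connectedness of manifolds. (Note that $\CHT$ itself, though it does satisfy (CFP+) and (SAIP), is \emph{not} surjunctive --- witness $x \mapsto x/2$ on $[0,1]$ --- which is exactly why it appears in Corollary \ref{c:closed-image} but not in the present statement.)

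Having matched each of the six categories against the three hypotheses of Theorem \ref{t:C-surjunctivity-rf}, I would conclude by applying that theorem to each $\CC$ separately, obtaining $\CC$-surjunctivity of every residually finite group, in particular of $\Z^d$. I do not anticipate any real obstacle: the argument is pure bookkeeping, and the few genuinely nontrivial ingredients --- the Ax--Grothendieck theorem for $K$-$\Aal$, Brouwer's invariance of domain for $\Man$, and (SAIP) for $K$-$\Aal$ over an uncountable algebraically closed field --- are all supplied by the references and the cited examples; what remains is simply to see that each listed category fits the template of Theorem \ref{t:C-surjunctivity-rf}.
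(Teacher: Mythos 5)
Your proposal is correct and is exactly the argument the paper intends: the corollary is stated as an immediate consequence of Theorem \ref{t:C-surjunctivity-rf} combined with Examples \ref{ex:surjunctive} and Examples \ref{ex:SAIP}, and your case-by-case verification of (CFP+), (SAIP), and surjunctivity matches the items cited there. Your parenthetical remark on why $\CHT$ is excluded is also accurate.
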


\begin{remarks} 
1) Let $\CC = \Setf$. The $\CC$-surjunctivity of residually finite groups was established by 
W.~Lawton \cite{gottschalk} (see also \cite[Theorem 3.3.1]{book}).
As mentioned in the Introduction, all amenable groups are $\CC$-surjunctive (cf. \cite[Corollary 5.9.3]{book}).
These results were generalized by Gromov \cite{gromov-esav} and Weiss \cite{weiss-sgds}
(see also \cite[Theorem 7.8.1]{book}) who proved that all \emph{sofic} groups are $\CC$-surjunctive.  It is not known whether all groups are $\CC$-surjunctive (resp. sofic) or not.
\par
2) Let $\CC = K$-$\Vectfd$, where $K$ is an arbitrary field.
In \cite{garden} (see also \cite{gromov-esav}) we proved that residually finite groups and
amenable groups are $\CC$-surjunctive. More generally, in \cite{israel} (see also \cite{gromov-esav} and \cite[Theorem 8.14.4]{book}) we proved that all sofic groups are $\CC$-surjunctive. We also proved (see \cite{israel} and \cite[Corollary 8.15.7]{book}) that a group $G$ is $\CC$-surjunctive, if and only if the group ring $K[G]$ is \emph{stably finite}, that is, the following condition holds: if two square matrices $a$ and $b$ with entries in the group ring $K[G]$ satisfy $ab = 1$, then they also satisfy $ba=1$. 
We recall that   Kaplansky (cf. \cite{kaplansky}) conjectured that all group rings are stably finite. 
He proved the conjecture when the ground field $K$ has characteristic zero, but for positive characteristic, though proved for all sofic groups by Elek and Szabo \cite{ES} (see also \cite{israel} and \cite[Corollary 8.15.8]{book}), the Kaplansky conjecture remains open. 
In other words, it is not known whether all groups are $\CC$-surjunctive or not when $K$ has positive characteristic.
\par
3) In \cite[Corollary 1.3]{modsofic}, it is shown that if $R$ is a left-Artinian ring and $\CC = R$-$\Modfg$, then every sofic group is $\CC$-surjunctive.
 \par
4) Let $\CC = K$-$\Aal$, where $K$ is an uncountable algebraically closed field.
The fact that all residually finite groups are $\CC$-surjunctive was established in \cite[Corollary 1.2]{algebraic} (see also \cite{gromov-esav}).
We do not know how to prove that all amenable (resp. all sofic) groups are $\CC$-surjunctive.
\end{remarks}

% SECTION 8 Tullio 24 mars 2012
\section{Reversibility of $\CC$-cellular automata}
\label{sec:reversibility}

\subsection{The subdiagonal intersection property}

Let $\CC$ be a concrete category satisfying (CFP) and let $f \colon A \to B$ be a $\CC$-morphism.
Consider the map $g \colon A \times A \to B \times B$ defined by $g(a_1,a_2) = (f(a_1),f(a_2))$ for all $(a_1,a_2) \in A \times A$.
If $\pi_i^A \colon A\times A \to A$ and $\pi_i^B \colon B \times B \to B$, $i = 1,2$, denote the projection morphisms,
 we have   $\pi_i^B \circ g = f \circ \pi_i^A$ for $i = 1,2$.
Therefore, the maps $\pi_1^B \circ g$ and $\pi_2^B \circ g$ are $\CC$-morphisms.
We deduce that  $g$ is also a $\CC$-morphism.
We shall write this morphism $g = f \times f$ and call it the \emph{square} of $f$ (not to be confused with the product map $h \colon A \to B \times B$ defined by
$h(a) = (f(a),f(a))$ for all $a \in A$).

\begin{definition}
Let $\CC$ be a concrete category satisfying condition (CFP). Let $A$ be a $\CC$-object.
\par
A subset $X \subset A \times A$ is called \emph{$\CC$-square-algebraic} 
if it is the inverse image of a point by the square of some $\CC$-morphism, i.e., if
there exists a $\CC$-morphism $f \colon A \to B$ and an element $(b_1,b_2) \in B \times B$
such that 
$$
X = \{(a_1,a_2) \in A \times A: f(a_1) = b_1 \text{  and  } f(a_2) = b_2\}.
$$
\par
A subset $X \subset A \times A$
is called \emph{$\CC$-prediagonal}  if there exists a $\CC$-morphism
$f \colon A \to B$ such that 
$$
X = \{(a_1,a_2) \in A \times A: f(a_1) = f(a_2)\}.
$$
In other words, a subset $X \subset A \times A$ is $\CC$-prediagonal  if and only if $X$  is the inverse image of the diagonal $\Delta_B \subset B \times B$  by the square 
$f \times f \colon A \times A \to B \times B$
of some $\CC$-morphism $f \colon A \to B$. 
\par 
A subset $X \subset A \times A$
is called \emph{$\CC$-codiagonal} if there exists a $\CC$-morphism
$f \colon A \to B$ such that 
$$
Y = \{(a_1,a_2) \in A \times A: f(a_1) \neq f(a_2)\}.
$$
In other words, a subset of $A \times A$ is $\CC$-codiagonal if and only if it is  the complement of a $\CC$-diagonal subset.
\par
A subset $X \subset A \times A$ is called $\CC$-subdiagonal if 
it is  the image of a $\CC$-prediagonal subset by the square of a $\CC$-morphism.
In other words, a subset $X \subset A \times A$ is subdiagonal if and only if there exist $\CC$-morphisms $f \colon B \to A$ and $g \colon B \to C$ such that
$$
X = \{(f(b_1),f(b_2)) : (b_1,b_2) \in B \times B \text{  and  } g(b_1) = g(b_2) \}.
$$
\end{definition}

\begin{remarks}
Suppose that $\CC$ is a concrete category satisfying (CFP) and that $A$ is a $\CC$-object. Then:
\par
1) Every $\CC$-square-algebraic subset $X \subset A \times A$ is a $\CC$-algebraic subset
of $A \times A$.
\par
2) The set $A \times A$ is a $\CC$-square-algebraic subset of itself.
Indeed, if $T = \{t\}$ is a terminal $\CC$-object, we have 
$A \times A = \{(a_1,a_2) \in A \times A : f(a_1) = t \text{ and }  f(a_2) = t \}$, where $f \colon A \to T$ denotes the unique $\CC$-morphism from $A$ to $T$.
\par
3)  Every prediagonal subset $X \subset A \times A$ contains the 
diagonal $\Delta_A \subset A \times A$.
Moreover,  $\Delta_A $ is a
$\CC$-prediagonal subset of $A \times A$ since 
$\Delta_A = \{(a_1,a_2) \in A \times A: f(a_1) = f(a_2)\}$ for $f = \Id_A$.
 \par 
4) The set  $A \times A$ is a $\CC$-prediagonal subset of itself. Indeed if $T$ is a terminal 
$\CC$-object  and $f \colon A \to T$ is the unique $\CC$-morphism from $A$ to $T$, then 
we have $A \times A = \{(a_1,a_2) \in A \times A: f(a_1) = f(a_2)\}$.
\par
5) The set  of $\CC$-prediagonal subsets of $A \times A$ is closed
under finite intersections. Indeed, if $(X_i)_{i \in I}$ is a finite family of
$\CC$-prediagonal subsets of $A \times A$,  we can find
$\CC$-morphisms $f_i \colon A \to B_i$ such that $X_i = \{(a_1,a_2) \in A \times A:
f_i(a_1) = f_i(a_2)\}$. Then if we set $B = \prod_{i \in I} B_i$ and $f = \prod_{i \in I} f_i \colon A \to B$, we have $\bigcap_{i \in I} X_i =  \{(a_1,a_2) \in A \times A:
f(a_1) = f(a_2)\}$.
 \end{remarks}

As usual, we shall sometimes omit the letter ``$\CC$" in the words $\CC$-square-algebraic, $\CC$-prediagonal, $\CC$-codiagonal and $\CC$-subdiagonal when the ambient category is clear from the context.

\begin{examples}
1) Let $\CC = \Set$. 
Given a set $A$, the  square-algebraic subsets of $A \times A$ are precisely the subsets of the form $E \times F$, where $E$ and $F$ are arbitrary subsets of $A$. 
A subset $X \subset A \times A$
is prediagonal if and only if it is the graph of an equivalence relation on $A$.
\par
2) In the category $\Grp$, given a group $G$, a subset of $G \times G$ is square-algebraic if and only if it is either empty or of the form
$(g_1N) \times (g_2N)$, where $N$ is a normal subgroup of $G$. 
The prediagonal subsets of $G \times G$ 
are precisely the subsets of the form $X_N = \{(g_1,g_2)\in G \times G: g_1N = g_2N\}$, where $N$ is a normal subgroup of $G$.  In other words, the prediagonal subsets are the graphs of the congruence relations modulo normal subgroups.
\par
3) In the category $\Rng$, given a ring $R$, the prediagonal subsets of $R \times R$ 
are precisely  the subsets of the form  $X_I = \{(r_1,r_2)\in R \times R: r_1+I = r_2+I\}$, where $I$ is a two-sided ideal of $R$. In other words, the prediagonal subsets are the graphs
of the congruence relations modulo two-sided ideals.
\par
4) In the category $R$-$\Mod$, given a $R$-module $M$, the prediagonal subsets of $M \times M$ are precisely the subsets of the form $X_N = \{(m_1,m_2)\in M \times M: m_1+N = m_2+N\}$, where $N$ is a submodule of $M$. In other words, the prediagonal subsets of $M \times M$ are the graphs of congruence relations modulo submodules of $M$.
Observe that every subdiagonal subset of $M \times M$ is a translate of a submodule of 
$M \times M$.
\par
5) In the full subcategory of $\Top$ whose objects are the Hausdorff topological spaces, given a Hausdorff topological space $A$, every square-algebraic (resp.~prediagonal,  resp.~codiagonal)   subset of  $A \times A$ is closed (resp.~closed, resp.~open)   in $A \times A$ for the product topology. 
 \par
6) In $\CHT$, given a compact Hausdorff topological space $A$, every subdiagonal subset of $A \times A$ is closed in $A \times A$.
\par
7)  In the category $K$-Aal of affine algebraic sets over a field $K$, every
square-algebraic (resp.~prediagonal,  resp. codiagonal)   subset of the square $A \times A$ of an object $A$ is closed (resp.~closed, resp.~open)   in the Zariski topology
on $A \times A$ (beware that the Zariski topology on $A \times A$ is not, in general, the product of the Zariski topology on $A$ with itself).
If $K$ is  algebraically closed,
it follows from Chevalley's theorem that every subdiagonal subset of $A \times A$ is constructible with respect to the Zariski topology on $A \times A$.
\end{examples}

The following definition is introduced by Gromov in \cite[Subsection 4.F]{gromov-esav}.

 \begin{definition}%[(SDIP)]
Let $\CC$ be a concrete category satisfying (CFP). 
One says that $\CC$ has 
the \emph{subdiagonal intersection property}, briefly (SDIP), provided that the following holds: 
for every $\CC$-object $A$, any $\CC$-square-algebraic subset $X \subset A \times A$, any $\CC$-codiagonal subset $Y \subset A \times A$, and any   non-increasing sequence $(Z_n)_{n \in \N}$ of $\CC$-subdiagonal subsets of $A \times A$ such that $X \cap Y \cap Z_n \neq \varnothing$ for all $n \in \N$, one has
 $$
 \bigcap_{n \in \N} X \cap Y \cap Z_n \neq \varnothing.
 $$
\end{definition}

\begin{examples}
1) The category $\CC = \Set$ does not satisfy (SDIP). 
To see this, take for example $A = \N$ and consider the maps $f_n \colon A \to A$ defined by
$$
f_n(k) = \begin{cases}
k & \text{ if } k \leq n-1\\
0 & \mbox{ if } k \geq n
\end{cases}
$$
for all $n, k \in \N$. 
Then the sets
$$
Z_n := \{(a_1,a_2) \in A \times A: f_n(a_1) = f_n(a_2)\}
$$
form a non-increasing sequence of $\CC$-prediagonal (and therefore $\CC$-subdiagonal) subsets of $A \times A$. 
 Take $X = A \times A$ and $Y = A \times A \setminus \Delta_A$.
 Then $X \subset A \times A$ is  $\CC$-square-algebraic and $Y \subset A \times A$ is $\CC$-codiagonal in $A \times A$. 
  We have $X \cap Y \cap Z_n \not= \varnothing$ for all $n \in \N$ since $(n,n + 1)\in X \cap Y \cap Z_n$. On the other hand, we clearly have $\bigcap_{n \in \N} X \cap Y \cap Z_n = \varnothing$.
 This shows that $\Set$ does not satisfy (SDIP).
 \par
2) The category $\Setf$ satisfies (SDIP)
since any non-increasing sequence of finite sets eventually stabilizes.
\par
3) Let $\CC = \Rng$, $\Grp$, $\Z$-$\Mod$, or $\Z$-$\Modfg$.
Take $A = \Z$, $X = A \times A$, $Y = A \times A \setminus \Delta_A$, and, for   $n \in \N$,
$$
Z_n = \{ (a_1,a_2) \in A \times A : a_1 \equiv a_2 \mod 2^n \}.
$$
Clearly $X$ is $\CC$-square-algebraic, $Y$ is $\CC$-codiagonal, and $(Z_n)_{n \in \N}$ is a non-increasing sequence of $\CC$-prediagonal (and hence $\CC$-subdiagonal) subsets of $A \times A$. We have $X \cap Y \cap Z_n \not= \varnothing$ for all $n \in \N$ but
$\bigcap_{n \in \N} X \cap Y \cap Z_n = \varnothing$. This shows that $\CC$ does not satisfy (SDIP).
\par
4) Given an arbitrary ring $R$, the category $ R$-$\ModArt$  satisfies (SDIP).  Indeed
we have seen that every subalgebraic subset is the translate of some submodule and that, in an Artinian module,
every non-increasing sequence consisting of translates of submodules eventually stabilizes.
\par
5) Let $R$ be a left-Artinian ring.
Then the  category $R$-$\Modfg$ satisfies (SDIP) since it is a subcategory of $R$-$\ModArt$.
\par
5) Given a field $K$,  the category $K$-$\Vectfd$  satisfies (SDIP)
since $K$-$\Vectfd = K$-$\Modfg$ and every field is a left-Artinian ring.
\par
6) The category $\CHT$ of compact Hausdorff topological spaces does not satisfy (SDIP).
Indeed, let $A = [0,1]$ denote the unit segment and consider, for every $n \in \N$  the continuous map $f_n \colon [0,1] \to [0,1]$  defined by
$$
f_n(x) = \begin{cases}
x & \text{ if } x \leq \frac{n}{n+1}\\
\frac{n}{n+1}  & \text{ if } \frac{n}{n+1} \leq x  \\
 \end{cases}
$$
for all $x \in A$.
Take $X = A \times A$, $Y = A \times A \setminus \Delta_A$, and
$$
Z_n = \{(a_1,a_2) \in A \times A: f_n(a_1) = f_n(a_2)\}.
$$
Then $X$ is square-algebraic, $Y$ is codiagonal, and $(Z_n)_{n \in \N}$ is a non-increasing sequence of prediagonal (and therefore subdiagonal) subsets of $A \times A$.
We clearly have $X \cap Y \cap Z_n \not= \varnothing$ for all $n \in \N$ but $\bigcap_{n \in \N} X \cap Y \cap Z_n = \varnothing$. This shows that $\CHT$ does not satisfy (SDIP). 
 \par
7) A variant of the previous argument may be used to prove that even $\Man$ does not satisfy (SDIP).
Indeed, consider the circle $\sph^1 := \{z \in \C : \vert z \vert = 1 \}$ and,
 for each $n \in \N$, the continuous map $f_n \colon \sph^1 \to \sph^1$ defined by
 $$
f_n(z) = \begin{cases}
 \mathrm{e}^{\mathrm{i}\frac{n + 2}{n + 1}\theta }  & \text{ if }  z = \mathrm{e}^{  \mathrm{i}\theta}  \text{  with  }0 \leq \theta \leq \frac{n + 1}{n+2}2\pi\\
1  & \text{ otherwise. }   \\
 \end{cases}
$$
 Take $X = A \times A$, $Y = A \times A \setminus \Delta_A$, and
$$
Z_n = \{(a_1,a_2) \in A \times A: f_n(a_1) = f_n(a_2)\}.
$$
Then $X$ is square-algebraic, $Y$ is codiagonal, and $(Z_n)_{n \in \N}$ is a non-increasing sequence of prediagonal (and therefore subdiagonal) subsets of $A \times A$ 
in the category $\Man$.
We have that  $X \cap Y \cap Z_n \not= \varnothing$ for all $n \in \N$, since $(1,\mathrm{e}^{  \mathrm{i}\frac{n + 1}{n + 2}\pi }) \in Z_n$.  
On the other hand, we clearly have  $\bigcap_{n \in \N} X \cap Y \cap Z_n = \varnothing$. 
This shows  that $\Man$ does not satisfy (SDIP).
\par 
8)  Let $K$ be an uncountable algebraically closed field. 
Then the category $K$-$\Aal$   satisfies (SDIP) (cf. \cite[4.F']{gromov-esav}). 
This follows from the fact that, when $K$ is an uncountable algebraically closed field, every non-increasing sequence  of nonempty constructible subsets of an affine algebraic set over $K$  has a nonempty intersection (see \cite[Proposition 4.4]{algebraic}).
Indeed, if $X$ (resp.~$Y$, resp.~$Z_n$) is a $\CC$-square-algebraic (resp~prediagonal, resp.~subdiagonal) subset of $A \times A$, then, as observed above, $X$ (resp.~$Y$, resp.~$Z_n$) is closed (resp.~open, resp.~constructible) in $A \times A$ and hence $X \cap Y \cap Z_n$ is constructible since any finite intersection of constructible subsets is itself constructible. 
\end{examples}

\subsection{Reversibility of $\CC$-cellular automata}

We shall use the following auxiliary result.
 
\begin{lemma}
\label{l:proj}
Let $\CC$ be a concrete category satisfying (CFP) and (SDIP).
Let $(X_n,f_{nm})$ be a projective sequence of nonempty  sets.
Suppose that there is a projective sequence $(A_n,F_{n m})$, consisting of $\CC$-objects $A_n$ and $\CC$-morphisms $F_{n m} \colon A_m \to A_n$ for all $n,m \in \N$ such that $m \geq n$, satisfying the following conditions:
\begin{enumerate}[(PSD-1)]
\item
for all $n \in \N$, one has $X_n = Y_n \cap Z_n$, where $Y_n \subset A_n \times A_n$ is $\CC$-codiagonal and $Z_n \subset A_n \times A_n$ is $\CC$-prediagonal;
 \item
 for all $n,m \in \N$ with $m \geq n$, 
setting $S_{n m} = F_{n m} \times F_{n m}$, one has
$S_{n m}(X_m) \subset X_n$ and $f_{n m}$ is the restriction 
of $S_{n m}$ to $X_m$;
\item
 for all $n,m \in \N$ with $m \geq n$, one has
$S_{n m} (Z_m) \subset Z_n$ and 
$f_{nm}(X_m) = Y_n \cap S_{n m} (Z_m)$.  
\end{enumerate}  
Then $\varprojlim X_n \not= \varnothing$. 
\end{lemma}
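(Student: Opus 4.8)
The plan is to run the argument of Theorem~\ref{t:proj-limit-sub} with (SAIP) replaced by (SDIP), and then to invoke Corollary~\ref{cor:IP}. I would begin by recording the bookkeeping around the squares $S_{nm} = F_{nm} \times F_{nm}$. Each $S_{nm}$ is a $\CC$-morphism, being the square of the $\CC$-morphism $F_{nm}$, and from $F_{nk} = F_{nm} \circ F_{mk}$ one gets $S_{nk} = S_{nm} \circ S_{mk}$ for $k \geq m \geq n$. Together with the inclusion $S_{mk}(Z_k) \subset Z_m$ furnished by (PSD-3), this shows that for each fixed $n$ the sets $S_{nk}(Z_k)$, $k \geq n$, form a non-increasing sequence; moreover $S_{nk}(Z_k)$ is the image of the $\CC$-prediagonal set $Z_k$ under the square of $F_{nk}$, hence is $\CC$-subdiagonal in $A_n \times A_n$.

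Next I would establish condition (IP-1) of Corollary~\ref{cor:IP}. By (PSD-3) we have $f_{nk}(X_k) = Y_n \cap S_{nk}(Z_k)$ for all $k \geq n$, and this set is nonempty because $X_k \neq \varnothing$. I would then apply (SDIP) inside the $\CC$-object $A_n \times A_n$, taking for the square-algebraic set the whole of $A_n \times A_n$ (a $\CC$-square-algebraic subset of itself, being the inverse image of the single point of a terminal object under the square of the canonical morphism), for the codiagonal set $Y_n$, and for the non-increasing sequence of subdiagonal sets $\bigl(S_{nk}(Z_k)\bigr)_{k \geq n}$. This yields $\bigcap_{k \geq n} f_{nk}(X_k) \neq \varnothing$ for every $n$; in particular (IP-1) holds.

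Then I would check condition (IP-2). Fix $n,m$ with $m \geq n$ and $x_n' = (b_1,b_2) \in \bigcap_{i \geq n} f_{ni}(X_i) \subset A_n \times A_n$. Using that $f_{nm}$ is the restriction of $S_{nm}$ to $X_m = Y_m \cap Z_m$ (by (PSD-2)) and that $f_{mj}(X_j) = Y_m \cap S_{mj}(Z_j)$ with $S_{mj}(Z_j) \subset Z_m$ (by (PSD-3)), a short computation unwinding the definitions gives, for every $j \geq m$,
\[
f_{nm}^{-1}(x_n') \cap f_{mj}(X_j) = S_{nm}^{-1}(x_n') \cap Y_m \cap S_{mj}(Z_j).
\]
Here $S_{nm}^{-1}(x_n') = \{(a_1,a_2) \in A_m \times A_m : F_{nm}(a_1) = b_1 \text{ and } F_{nm}(a_2) = b_2\}$ is $\CC$-square-algebraic, $Y_m$ is $\CC$-codiagonal, and $\bigl(S_{mj}(Z_j)\bigr)_{j \geq m}$ is a non-increasing sequence of $\CC$-subdiagonal subsets of $A_m \times A_m$. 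Each term $f_{nm}^{-1}(x_n') \cap f_{mj}(X_j)$ is nonempty by Remark~\ref{rem:non-empty-decreasing}, so (SDIP) gives $\bigcap_{j \geq m} f_{nm}^{-1}(x_n') \cap f_{mj}(X_j) \neq \varnothing$, which is exactly (IP-2). Corollary~\ref{cor:IP} then yields $\varprojlim X_n \neq \varnothing$.

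The one place that requires care is the displayed computation: one must verify that $f_{nm}^{-1}(x_n') \cap f_{mj}(X_j)$ genuinely decomposes as the intersection of a $\CC$-square-algebraic set, a $\CC$-codiagonal set and a $\CC$-subdiagonal set, and that the subdiagonal pieces $S_{mj}(Z_j)$ decrease in $j$, so that (SDIP) applies verbatim. The three hypotheses (PSD-1)--(PSD-3) have been tailored precisely so that this holds; everything else is a transcription of the proof of Theorem~\ref{t:proj-limit-sub}.
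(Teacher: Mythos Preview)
Your proof is correct and follows essentially the same route as the paper's own argument: verify (IP-1) and (IP-2) of Corollary~\ref{cor:IP} by two applications of (SDIP), using that $f_{nm}(X_m) = Y_n \cap S_{nm}(Z_m)$ and that the fiber $f_{nm}^{-1}(x_n') \cap f_{mj}(X_j)$ rewrites as $S_{nm}^{-1}(x_n') \cap Y_m \cap S_{mj}(Z_j)$. You are in fact a bit more explicit than the paper in naming the square-algebraic set ($A_n \times A_n$ in the first application, $S_{nm}^{-1}(x_n')$ in the second) and in justifying why the displayed identity holds via $S_{mj}(Z_j) \subset Z_m$; the paper writes $F_{nm}^{-1}(x_n')$ where you correctly have $S_{nm}^{-1}(x_n')$.
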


\begin{proof}
  Let $(X_n',f_{nm}')$ denote the universal projective sequence
associated with the projective sequence $(X_n,f_{nm})$.
As   $S_{n m} (Z_m) $, $m = n, n+1, \ldots$, is a non-increasing sequence of 
$\CC$-subdiagonal subsets of $A_n \times A_n$ 
such that $Y_n \cap S_{n m} (Z_m)  = f_{n m}(X_m) \neq \varnothing$ for all $m \geq n$, we deduce from (SDIP) that
\begin{equation}
\label{eq:proj-1-satisfied}
X_n' = \bigcap_{m \geq n} f_{n m}(X_m) = \bigcap_{m \geq n} Y_n \cap S_{n m} (Z_m)  \neq \varnothing.
\end{equation}
 Let now $m,n \in \N$ with $m \geq n$ and suppose that $x_n' \in X_n'$.
Then we have    $f_{n m}^{-1}(x_n') \cap f_{m k}(X_k) \neq \varnothing$ for all $k \geq m$ 
  by Remark \ref{rem:non-empty-decreasing}.
By applying again (SDIP), we get
\begin{equation}
\label{eq:proj-2-satisfied}
\bigcap_{k \geq m} f_{n m}^{-1}(x_n') \cap f_{m k}(X_k) =  \bigcap_{k \geq m} F_{n m}^{-1}(x_n') \cap Y_m \cap S_{m k} (Z_k) \not= \varnothing
\end{equation}
(observe that $F_{n m}^{-1}(x_n')$ is $\CC$-square-algebraic).
From \eqref{eq:proj-1-satisfied} and \eqref{eq:proj-2-satisfied}, it follows that 
conditions  (IP-1) and (IP-2) in Corollary \ref{cor:IP} are satisfied,
so that we conclude from this corollary that $\varprojlim X_n \not= \varnothing$. 
\end{proof}

\begin{theorem}
\label{t:reversible}
Let $\CC$ be a concrete category satisfying (CFP+) and (SDIP), and let $G$ be an arbitrary group. 
Then every bijective $\CC$-cellular automaton $\tau \colon A^G \to B^G$ is reversible.
\end{theorem}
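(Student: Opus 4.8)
The plan is to show that the inverse map $\tau^{-1}\colon B^G\to A^G$ is a cellular automaton. By the generalized Curtis--Hedlund theorem (Theorem~\ref{t:ca-equiv-uc}), since $\tau^{-1}$ is automatically $G$-equivariant, it is enough to check it is uniformly continuous for the prodiscrete uniform structures, and by $G$-equivariance this amounts to producing a \emph{single} finite subset $N\subset G$ such that
$$
\tau(x)\vert_N=\tau(x')\vert_N\ \Longrightarrow\ x(1_G)=x'(1_G)\qquad\text{for all }x,x'\in A^G.
$$
Indeed, such an $N$ furnishes (using surjectivity of $\tau$ to choose preimages and injectivity so the value is unambiguous) a well-defined map $\nu\colon B^N\to A$ with $\tau^{-1}(y)(1_G)=\nu(y\vert_N)$, whence $\tau^{-1}(y)(g)=\tau^{-1}(g^{-1}y)(1_G)=\nu((g^{-1}y)\vert_N)$, exhibiting $\tau^{-1}$ as a cellular automaton with memory set $N$.

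First I would reduce to the case where $G$ is countable. Fix a memory set $M$ for $\tau$, enlarged so that $1_G\in M$ (this preserves the property of being a $\CC$-cellular automaton by Proposition~\ref{p:carac-aca}), and let $H=\langle M\rangle$, a countable subgroup of $G$. Since $M\subset H$ one checks directly that $\tau(x)\vert_H=\tau_H(x\vert_H)$ for all $x\in A^G$, where $\tau_H$ is the restriction cellular automaton; it is a $\CC$-cellular automaton by Proposition~\ref{p:C-induction} and is bijective by Theorem~\ref{t:induction}(iii). Granting the countable case, $(\tau_H)^{-1}$ is a cellular automaton over $H$ with a memory set $N\subset H$ and local defining map $\nu$; a short computation with $G$-equivariance (using that $\tau(x)\vert_H=\tau_H(x\vert_H)$ and the analogous identity for the induced map) shows that the cellular automaton $((\tau_H)^{-1})^G$ obtained by induction is a two-sided inverse of $\tau$, hence equals $\tau^{-1}$, so $\tau$ is reversible.

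For the countable case I would fix an exhaustion $G=\bigcup_n E_n$ by finite sets with $1_G\in E_0$ and $E_n\subset E_{n+1}$, and set $D_n=E_nM$, so that $1_G\in D_n\subset D_{n+1}$ and $\bigcup_n D_n=G$. Since $\tau(x)\vert_{E_n}$ depends only on $x\vert_{D_n}$, there is a well-defined map $\tau_{(n)}\colon A^{D_n}\to B^{E_n}$ with $\tau_{(n)}(x\vert_{D_n})=\tau(x)\vert_{E_n}$, and exactly as in the proof of Theorem~\ref{t:closed-image} it is a $\CC$-morphism (a product over $g\in E_n$ of composites $A^{D_n}\to A^{gM}\to A^M\xrightarrow{\mu_M}B$). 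Assume, for contradiction, that no finite $N$ has the displayed property; then for every $n$ there exist $x_n,x_n'\in A^G$ with $\tau(x_n)\vert_{E_n}=\tau(x_n')\vert_{E_n}$ but $x_n(1_G)\neq x_n'(1_G)$. Inside $A^{D_n}\times A^{D_n}$ put $Z_n=\{(u,u'):\tau_{(n)}(u)=\tau_{(n)}(u')\}$, which is $\CC$-prediagonal via $\tau_{(n)}$, and $Y_n=\{(u,u'):u(1_G)\neq u'(1_G)\}$, which is $\CC$-codiagonal via the projection to the $1_G$-coordinate; set $X_n=Y_n\cap Z_n$, which is nonempty since it contains $(x_n\vert_{D_n},x_n'\vert_{D_n})$. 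Taking $A_n=A^{D_n}$ with the restriction $\CC$-morphisms $F_{nm}\colon A^{D_m}\to A^{D_n}$, $S_{nm}=F_{nm}\times F_{nm}$, and $f_{nm}$ the restriction of $S_{nm}$ to $X_m$, one verifies hypotheses (PSD-1)--(PSD-3) of Lemma~\ref{l:proj}: (PSD-1) is the definition of $X_n$; (PSD-2) follows because $\tau_{(n)}(u\vert_{D_n})$ is the restriction to $E_n$ of $\tau_{(m)}(u)$ and $1_G\in D_n$; and (PSD-3) follows because, the $1_G$-coordinate being preserved by restriction, any pair in $Y_n\cap S_{nm}(Z_m)$ lifts to a pair in $Y_m\cap Z_m=X_m$. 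Lemma~\ref{l:proj} then gives $\varprojlim X_n\neq\varnothing$; a coherent family in the limit glues to configurations $x,x'\in A^G$ with $\tau(x)=\tau(x')$ (read off from the $Z_n$'s together with $\bigcup_n E_n=G$) and $x(1_G)\neq x'(1_G)$ (read off from $X_0\subset Y_0$), contradicting injectivity of $\tau$.

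The step I expect to be the main obstacle is the precise packaging needed to invoke Lemma~\ref{l:proj}, and within it the verification of (PSD-3): one must see that the ``forbidden-diagonal'' constraints $Y_n$ remain compatible with passage to the projective limit, which works exactly because the single coordinate $1_G$ defining $Y_n$ lies in every truncation window $D_n$. The bookkeeping separating the window $E_n$ on which $\tau(x)$ is determined from the larger window $D_n=E_nM$ on which $x$ must be prescribed, and the harmless enlargement of $M$ to contain $1_G$, are the routine points to keep straight; the reduction from an arbitrary group $G$ to the countable subgroup $H$ is then a formal consequence of Theorem~\ref{t:induction} and the induction/restriction formalism of Subsection~\ref{ss:induction-restriction}.
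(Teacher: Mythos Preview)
Your proposal is correct and follows essentially the same strategy as the paper: contradict the failure of a local memory set for $\tau^{-1}$ by assembling a projective system of nonempty sets $X_n=Y_n\cap Z_n$ with $Y_n$ codiagonal and $Z_n$ prediagonal, invoke Lemma~\ref{l:proj} via (SDIP), and pass to the limit to violate injectivity; the reduction to countable $G$ via the subgroup generated by $M$ is also the same. The only cosmetic difference is your bookkeeping: you truncate with $D_n=E_nM$ and $\tau_{(n)}\colon A^{D_n}\to B^{E_n}$, whereas the paper uses the dual convention $F_n=\{g:gM\subset E_n\}$ with $\tau_n\colon A^{E_n}\to B^{F_n}$; and you spell out the induction step $((\tau_H)^{-1})^G=\tau^{-1}$ more explicitly than the paper does.
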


\begin{proof}
Let  $\tau \colon A^G \to B^G$ be a bijective $\CC$-cellular automaton.
We have to show that the inverse map $\tau^{-1} \colon B^G \to A^G$ is a cellular automaton. 
\par 
As in the proof of Theorem \ref{t:closed-image}, we suppose first that the group $G$ is countable. Let us show that the following local property is satisfied
by $\tau^{-1}$: 
\begin{itemize}
\item[($\ast$)]
there exists a finite subset $N \subset G$ such that, 
for any $y \in B^G$, the element $\tau^{-1}(y)(1_G)$ only depends on
the restriction of $y$ to $N$.
\end{itemize}
This will show that $\tau$ is reversible.
Indeed, if ($\ast$) holds for some finite subset $N \subset G$, then there exists a (unique) map  $\nu \colon B^{N} \to A$ such that
$$
\tau^{-1}(y)(1_G) = \nu(y\vert_N)  
$$
for all $y \in B^G$.
Now, the $G$-equivariance of $\tau$ implies the $G$-equivariance of its inverse map $\tau^{-1}$.
Consequently, we get
$$
\tau^{-1}(y)(g) = g^{-1}\tau^{-1}(y)(1_G) = \tau^{-1}(g^{-1}y)(1_G) =  \nu((g^{-1}y)\vert_N)
$$
for all $y \in B^G$ and $g \in G$. This implies that $\tau^{-1}$ is the cellular automaton with memory set $N$ and local defining map $\nu$.
\par
Let us assume by contradiction that condition  ($\ast$) is not satisfied. 
Let $M$ be a memory set for $\tau$ such that $1_G \in M$.
Since $G$ is countable, we can find a sequence $(E_n)_{n \in \N}$ of finite subsets of $G$ such that $G = \bigcup_{n \in \N} E_n$,
$M \subset E_0$, and $E_n \subset E_{n + 1}$ for all $n \in \N$. 
Consider, for each $n \in \N$, the finite subset $F_n \subset G$ defined by  
$F_n = \{g \in G: gM \subset E_n\}$. Note that $G = \bigcup_{n \in \N} F_n$,
$1_G \in F_0$,  and $F_n \subset F_{n + 1}$
for all $n \in \N$.
\par
Since ($\ast$) is not satisfied, we can find, for each $n \in \N$, two configurations 
$y_n', y_n'' \in B^G$ such that 
\begin{equation}
\label{e:property-y-n}
y_n'\vert_{F_n} = y_n''\vert_{F_n}
\quad \text{and} \quad 
\tau^{-1}(y_n')(1_G) \neq \tau^{-1}(y_n'')(1_G).
\end{equation}
Recall from the proof of Theorem \ref{t:closed-image}, 
that $\tau$ induces, for each $n \in \N$,  a $\CC$-morphism $\tau_n \colon A^{E_n} \to B^{F_n}$ given by $\tau_n(u) = (\tau(x))\vert_{F_n}$ for every $u \in A^{E_n}$, where $x \in A^G$ is any configuration extending $u$. 
\par
Consider now, for each $n \in \N$, the subset $X_n \subset A^{E_n} \times A^{E_n}$ 
consisting of all pairs $(u_n',u_n'')  \in A^{E_n} \times A^{E_n}$ such that 
$\tau_n(u_n') = \tau_n(u_n'')$ and $u_n'(1_G) \neq u_n''(1_G)$. 
We have  $X_n = Y_n \cap Z_n$, where
$$
Y_n := \{(u_n',u_n'') \in A^{E_n} \times A^{E_n} : u_n'(1_G) \not= u_n''(1_G) \}
$$
and
$$
Z_n := \{(u_n',u_n'') \in A^{E_n} \times A^{E_n} : \tau_n(u_n') = \tau_n(u_n'') \}.
$$
Note that $Y_n$ (resp.~$Z_n$) is a $\CC$-codiagonal (resp.~$\CC$-prediagonal) subset
of $A^{E_n} \times A^{E_n}$.   
 Note also that $X_n$ is not empty since 
$$
((\tau^{-1}(y_n'))\vert_{E_n},(\tau^{-1}(y_n''))\vert_{E_n}) \in X_n
$$
by \eqref{e:property-y-n}.
\par
For $m \geq n$, the restriction 
map $\rho_{n m} \colon A^{E_m} \to A^{E_n}$
gives us a $\CC$-square-morphism
$$
S_{n m}= \rho_{n m} \times \rho_{n m} \colon A^{E_m} \times A^{E_m} \to A^{E_n} \times A^{E_n}
$$
which induces by restriction a map $f_{n m} \colon X_m \to X_n$.
\par
We clearly have $S_{n m}(Z_m) \subset Z_n$ and $S_{n m}(X_m) =  Y_n \cap S_{n m}(Z_m)$
for all $n,m \in \N$ such that $m \geq n$.
 Since $\CC$ satisfies (SDIP), it follows from Lemma \ref{l:proj},
that $\varprojlim X_n \not= \varnothing$.
\par
Choose an element $(p_n)_{n \in \N} \in \varprojlim X_n$. 
Thus $p_n = (u_n',u_n'') \in A^{E_n} \times A^{E_n}$  and $u'_{n + 1}$ (resp. $u''_{n + 1}$) coincides with $u_n'$ (resp. $u_n''$) on $E_n$  for all $n \in \N$. 
As $G = \cup_{n \in \N} E_n$, we deduce that there exists a (unique) configuration 
$x' \in A^G$ (resp. $x'' \in A^G$) such that $x'\vert_{E_n} = u_n'$ 
(resp. $x''\vert_{E_n} = u_n''$) for all $n \in \N$. 
Moreover, we have 
$$
(\tau(x'))\vert_{F_n}= \tau_n(u_n') = \tau_n(u_n'') =  (\tau(x''))\vert_{F_n} 
$$
for all $n \in \N$.   
As $G = \cup_{n \in \N} F_n$, this shows that $\tau(x') = \tau(x'')$.
On the other hand, we have
$x'(1_G) = u_0'(1_G) \not= u_0''(1_G) = x''(1_G)$ and hence $x' \not= x''$.
This contradicts the injectivity of $\tau$ and 
therefore completes the proof that $\tau$ is reversible in the case when $G$ is countable.
\par
We now drop the countability assumption on $G$ and prove the theorem in its full  generality. Choose a memory set $M \subset G$ for $\tau$ and denote by $H$ the subgroup of $G$ generated by  $M$.
Observe that $H$ is countable since $M$ is finite.
By Theorem \ref{t:induction}, the restriction cellular automaton $\tau_H \colon A^H \to A^H$ is a bijective $\CC$-cellular automaton. It then follows from the first part of the proof that $\tau_H$ is reversible.
This implies that $\tau$ is reversible as well by Theorem \ref{t:induction}.(iii).
\end{proof}

\begin{corollary}
\label{cor:reversible}
If $G$ is an arbitrary group, 
all bijective $\CC$-cellular automata are reversible   when $\CC$ is one of the following concrete
categories:
\begin{itemize}
\item 
$\Setf$, the category of finite sets;
\item 
$K$-$\Vectfd$, the category of finite-dimensional vector spaces over an arbitrary field $K$;
\item 
$R$-$\ModArt$, the category of left Artinian modules over an arbitrary ring $R$;
\item 
$R$-$\Modfg$, the category of finitely generated left modules over an arbitrary left-Artinian 
ring $R$;
\item 
$K$-$\Aal$, the category of affine algebraic sets over an arbitrary uncountable algebraically closed field $K$.
 \end{itemize}
\end{corollary}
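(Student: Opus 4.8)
The plan is to deduce the corollary directly from Theorem~\ref{t:reversible}, which asserts that for any group $G$ every bijective $\CC$-cellular automaton $\tau \colon A^G \to B^G$ is reversible as soon as the concrete category $\CC$ satisfies both (CFP+) and (SDIP). Hence it suffices to verify, for each of the five categories in the list, that these two conditions hold; much of this has already been recorded in the examples above, so the proof is essentially a matter of assembling those observations.

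First I would recall why each category satisfies (CFP+). The categories $K$-$\Vectfd = K$-$\Modfg$, $R$-$\ModArt$, and $R$-$\Modfg$ over an arbitrary ring (in particular over a left-Artinian one) all have a zero object --- the trivial module --- and are closed under finite direct sums, hence satisfy (CFP) and, having a zero object, also (CFP+). The category $\Setf$ has singletons as terminal objects, and any map out of a singleton underlies a morphism, so it satisfies (CFP+); the same reasoning applies to $K$-$\Aal$.

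Next I would invoke (SDIP) for each category. For $\Setf$ this is immediate since a non-increasing sequence of finite sets stabilizes. For $R$-$\ModArt$ --- and hence for its subcategories $R$-$\Modfg$ (when $R$ is left-Artinian) and $K$-$\Vectfd$ --- one uses that every subdiagonal subset of $A \times A$ is a translate of a submodule, and that the translates of submodules of an Artinian module satisfy the descending chain condition. The one genuinely substantial input is (SDIP) for $K$-$\Aal$ over an uncountable algebraically closed field $K$: there, square-algebraic, prediagonal, and subdiagonal subsets of $A \times A$ are respectively Zariski-closed, Zariski-open, and constructible (by Chevalley's theorem), so the intersections $X \cap Y \cap Z_n$ are constructible, and one appeals to the fact that over an uncountable algebraically closed field a non-increasing sequence of nonempty constructible sets has nonempty intersection (\cite[Proposition~4.4]{algebraic}).

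With (CFP+) and (SDIP) checked in each case, Theorem~\ref{t:reversible} applies and yields the claim. I expect no real obstacle here: the only place requiring genuine work is (SDIP) for $K$-$\Aal$, and that is entirely contained in the constructibility argument already cited; everything else is bookkeeping. Note also that the present list is shorter than those in Corollaries~\ref{c:closed-image} and \ref{cor:residually-finite-C-surj}: indeed $\CHT$ and $\Man$, although they satisfy (SAIP), fail (SDIP), as the examples following the definition of (SDIP) show, so Theorem~\ref{t:reversible} does not apply to them.
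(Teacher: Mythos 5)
Your proposal is correct and follows exactly the route the paper intends: the corollary is a direct application of Theorem~\ref{t:reversible}, with (CFP+) and (SDIP) for each of the five categories already verified in the examples, and your additional remark explaining the absence of $\CHT$ and $\Man$ matches the paper's counterexamples to (SDIP) for those categories.
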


In the following examples we show that there exist bijective cellular automata
which are not reversible. They are modeled after \cite{periodic} (see also \cite[Example 1.10.3]{book}).

\begin{examples}
\label{ex:p-adic}
1) Let $p$ be a prime number and $A = \Z_p = \varprojlim \Z/p^n\Z$ the ring of $p$-adic integers. 
Recall that $A$ is a compact Hausdorff topological ring  for the topology associated with the 
$p$-adic metric.
We can regard $A$ as a $\CC$-object for $\CC = \Set, \Grp, \Z$-$\Mod$, and  $\CHT$.
Consider now the cellular automaton $\tau \colon A^\Z \to A^\Z$ defined by
\[
\tau(x)(n) = x(n) - px(n + 1)
\]
for all $x \in A^\Z$ and $n \in \Z$. It has memory set $M = \{0,1\} \subset \Z$ and
associated local defining map $\mu_M \colon A^M   \to A$ given by
$\mu_M(y) = y(0) - py(1)$ for all $y \in A^M$. It follows that $\tau$ is a $\CC$-cellular
automaton for $\CC$ any of the concrete categories mentioned above.
Note that $\tau$ is bijective. Indeed the inverse map $\tau^{-1} \colon A^\Z \to A^\Z$
is given by
\[
\tau^{-1}(x)(n) = \sum_{k = 0}^\infty  p^kx(n + k)
\]
for all $x \in A^\Z$ and $n \in \Z$. However, $\tau^{-1}$ is not a cellular automaton.
Indeed, let $F$ be a finite subset of $\Z$ and choose an integer $m \geq 0$ such that
$F \subset (-\infty,m]$. Consider the configurations $y,z \in A^\Z$ defined by $y(n) = 0$
if $n \leq m$ and $y(n) = 1$ if $n \geq m+1$, and $z(n)=0$ for all $n \in \N$, respectively.
Then $y$ and $z$ coincide on $F$. However, we have
\[
\tau^{-1}(y)(0) = \sum_{k = 0}^\infty  p^ky(k) = \sum_{k = m+1}^\infty  p^k
\]
and
\[
\tau^{-1}(z)(0) = \sum_{k = 0}^\infty  p^kz(k) = 0.
\]
It follows that there is no finite subset $F \subset \Z$ such that $\tau^{-1}(x)(0)$
only depends on the restriction of $x \in A^\Z$ to $F$. Thus, there is no
finite subset $F \subset \Z$ which may serve as a memory set for $\tau^{-1}$.
\par\label{ex:power-series}
2) Let $R$ be a ring and let $A = R[[t]]$ denote the ring of all formal power series in
one indeterminate $t$ with coefficients in $R$. Note that $A$ has a natural structure 
of a left $R$-module.
Then the cellular automaton $\tau \colon A^\Z \to A^\Z$ defined by
\[
\tau(x)(n) = x(n) - tx(n + 1)
\]
for all $x \in A^\Z$ and $n \in \Z$ is a bijective $\CC$-cellular automaton for
$\CC = R$-$\Mod$.
However, this cellular automaton  is not reversible unless $R$ is a zero ring. The proof is analogous to the one given  in the preceding example
(see \cite[Example 1.10.3]{book} in the case when $R$ is a field).
\end{examples}

\begin{remark}
In Examples \ref{ex:p-adic},  we can replace the
group $\Z$ by any non-periodic group $G$. Indeed, if $G$ is non-periodic and
$H \subset G$ is an infinite cyclic subgroup (thus $H \cong \Z$), 
the induced cellular automaton $\tau^G \colon A^G \to A^G$
is a bijective $\CC$-cellular automaton for $\CC =  \Set, \Grp, \Z$-$\Mod$, and $\CHT$
(resp. $R$-$\Mod$ with $R$ a nonzero ring) by Theorem \ref{t:induction} and Proposition \ref{p:C-induction}, 
which is not reversible.
\end{remark}%REFERENCES

\end{document}